\newtheorem*{theorem*}{Theorem}
\newtheorem{remark}{Remark}[section]
\newtheorem{lemma}{Lemma}
\newtheorem{cor}{Corollary}
\newtheorem{proposition}{Proposition}
\newtheorem{theorem}{Theorem}
\newcommand{\R}{\mathbb R}
\newcommand{\RR}{\mathbb R}
\newcommand{\prob}{\mathbf{P}}
\newcommand{\esp}{\mathbb{E}}
\newcommand{\e}{\varepsilon}
\newcommand{\bfA}{\mathbf{A}}
\newcommand{\bfI}{\mathbf{I}}
\newcommand{\bfX}{\mathbf{X}}
\newcommand{\Xlab}{\bfX_{\rm lab}}
\newcommand{\bzero}{\mathbf{0}}
\newcommand{\bs}{\boldsymbol}
\newcommand{\bX}{\bs X}
\newcommand{\bY}{\bs Y}
\newcommand{\bZ}{\bs Z}
\newcommand{\bu}{\bs u}
\newcommand{\bv}{\bs v}
\newcommand{\bx}{\bs x}
\newcommand{\bbeta}{\bs\beta}
\newcommand{\bzeta}{\bs\zeta}
\newcommand{\bmu}{\bs\mu}
\newcommand{\bfSigma}{\mathbf \Sigma}
\newcommand{\excessRisk}{\mathcal E}
\newcommand{\transRisk}{\mathcal E_{\text{\rm TL}}}
\newcommand{\riskr}{\mathcal R}
\newcommand{\red}{\color{red}}
\newcommand{\blue}{\color{blue}}
\newcommand{\pen}{\text{\rm pen}}
\def\hat{\widehat}
\let\orgdescriptionlabel\descriptionlabel
\renewcommand*{\descriptionlabel}[1]{%
    \let\orglabel\label
    \let\label\@gobble
    \phantomsection
    \protected@edef\@currentlabel{#1}
    \let\label\orglabel
    \orgdescriptionlabel{#1}%
}
\begin{document}

\begin{frontmatter}

\title{On the prediction loss of the lasso in the partially labeled setting}
\runtitle{Lasso in the partially labeled setting}

\begin{aug}
\author{\fnms{Pierre C.} \snm{Bellec,}\ead[label=e1]{pierre.bellec,arnak.dalalyan,edwin.grappin@ensae.fr}}
\author{\fnms{Arnak S.} \snm{Dalalyan,}\ead[label=e2]{arnak.dalalyan@ensae.fr}}
\author{\fnms{Edwin} \snm{Grappin}\ead[label=e3]{edwin.grappin@ensae.fr}}
\and
\author{\fnms{Quentin} \snm{Paris}\ead[label=e4]{qparis@hse.ru}}

\runauthor{P. Bellec et al.}

\affiliation{ENSAE ParisTech - CREST and National Research University - Higher School of Economics}
\address{3 avenue Pierre Larousse,
92245 Malakoff, France.\hfill\break
26 Shabolovka street, Laboratory of Stochastic Analysis and its Applications,
Moscow, Russian Federation.}
\end{aug}

\begin{abstract}
In this paper we revisit the risk bounds of the lasso estimator in the context of transductive and semi-supervised
learning. In other terms, the setting under consideration is that of regression with random design under partial labeling.
The main goal is to obtain user-friendly bounds on the off-sample prediction risk. To this end, the simple setting of
bounded response variable and bounded (high-dimensional) covariates is considered. We propose some new adaptations of the lasso to
these settings and establish oracle inequalities both in expectation and in deviation. These results provide non-asymptotic
upper bounds on the risk that highlight the interplay between the bias due to the mis-specification of the linear model,
the bias due to the approximate sparsity and the variance. They also demonstrate that the presence of a large number
of unlabeled features may have significant positive impact in the situations where the restricted eigenvalue of the
design matrix vanishes or is very small.
\end{abstract}

\begin{keyword}[class=MSC]
\kwd[Primary ]{62H30}
\kwd[; secondary ]{62G08}
\end{keyword}

\begin{keyword}
\kwd{Semi-supervised learning}
\kwd{sparsity}
\kwd{lasso}
\kwd{oracle inequality}
\kwd{transductive learning}
\kwd{high-dimensional regression}
\end{keyword}

\end{frontmatter}


\section{Introduction}
We consider the problem of prediction under the quadratic loss. That is, for a random feature-label pair
$(\bX,Y)$ drawn from a distribution $P$ on a product space $\mathcal X\times\mathcal Y$, we aim at predicting $Y$ as a function of $\bX$. The goal
is to find a measurable function $f:\mathcal X\to \mathcal Y$ such that the expected quadratic risk,
\begin{equation}
   \riskr(f)=\int_{\mathcal X\times\mathcal Y} (y-f(\bx))^2\,
    P(d\bx,dy)=\esp\big[\big(Y-f(\bX)\big)^2\big]
    \label{risk-of-f}
\end{equation}
is as small as possible. When $\mathcal Y$ is an interval of $\RR$  and $\mathcal X$ is a measurable set in $\RR^p$---which is the setting considered
in the present work---the Bayes predictor, defined as the minimizer of $\riskr(f)$ over all
measurable functions $f:\mathcal X\to\mathcal Y$, is the regression function \citep{Vapnik98}
\begin{equation}
    f^{\star}(\bx) = \esp[Y|\bX=\bx].
    \label{bayes}
\end{equation}
Using $f^\star$, the problem can be rewritten in a form which is more familiar in Statistics, namely
\begin{equation}
    \label{linearmodel}
    Y=f^{\star}(\bX)+\xi,
\end{equation}
where the noise variable $\xi$ satisfies $\esp[\xi|\bX]=0$, $P_X$-almost surely\footnote{Notation $P_X$ is used for
the marginal distribution of $\bX$.}. In the present work, we tackle the prediction problem in the case where the
available data $\mathcal D_{\rm all}$ is of the form $\mathcal D_{\rm all}=\mathcal D_{\rm labeled}\cup\mathcal D_{\rm unlabeled}$, where
$$
\mathcal D_{\rm labeled}=
\{(\bX_{1},Y_{1}),\dots,(\bX_{n},Y_{n})\}\quad\mbox{and}\quad\mathcal D_{\rm unlabeled}=\{\bX_{n+1},\dots,\bX_{N}\}.
$$
The labeled sample $\mathcal D_{\rm labeled}$ is composed of independent and identically distributed (i.i.d.) feature-label
pairs with distribution $P$. The unlabeled sample $\mathcal D_{\rm unlabeled}$ contains only i.i.d. features, with distribution $P_X$,
and is independent of $\mathcal D_{\rm labeled}$. This formal setting accounts for a number of realistic situations in which the
labeling process is costly while the unlabeled data points are available in abundance \citep[see, for instance,][]{FreeFoodCam05,GuillauminVS10,BrouarddS11},
that is $n$ may be quite small compared to $N$.
Here, the baseline idea is to build upon the sample $\mathcal D_{\rm unlabeled}$ to improve the supervised prediction process
based on $\mathcal D_{\rm labeled}$ alone. In this context, our study encompasses two closely related settings: semi-supervised
learning and transductive learning.

In the \textit{semi-supervised learning} setting, one aims at constructing a predictor $\hat f$, based on the data $\mathcal D_{\rm all}$,
such that the excess risk
\begin{equation}
    \label{excess}
    \excessRisk(\hat f)
    =
    \riskr(\hat f)-\riskr(f^{\star})= \int_{\R^p}\big(\hat f(\bx)-f^{\star}(\bx)\big)^2P_X(d\bx)=\|\hat f-f^\star\|_{L_2(P_X)}^2
\end{equation}
is as small as possible. This learning framework differs from the classical supervised learning only in that the data set is enriched by
the unlabeled features.

In contrast with this, the goal of \textit{transductive learning} is to predict solely the labels of the observed unlabeled features.
This amounts to considering the same setting as above but to measure the quality of a prediction function $f$
by the excess risk
\begin{equation}\label{excess1}
\transRisk(f)
=
\frac1{N-n}\sum_{i=n+1}^N
\big(f(\bX_i)-f^{\star}(\bX_i)\big)^2.
\end{equation}
We refer the reader to \citep{CSZ06,Zhu08} and the references therein for a comprehensive survey
on the topic of semi-supervised and transductive learning. Theoretical analysis of the generalisation
error and the excess risk in this context can be found in~\citep{Rigollet07,Wang07,lafferty2007}, whereas
the closely related area of manifold learning is studied in \citep{Belkin06,Nadler2009,Niyogi13}.
The purpose of the present work differs from these papers in that we put the emphasis on the high-dimensional
setting and the sparsity assumption. The goal is to understand whether the unlabeled data can help in predicting
the unknown labels using the $\ell_1$-penalized empirical risk minimizers. From another perspective---that of
multi-view learning---the problem of sparse semi-supervised learning is investigated in \citep{Shiliang10}.

When the feature vector is high dimensional, it is reasonable to consider prediction strategies
based on ``simple'' functions $f$ in order to limit the computational cost. A widely used approach is then
to look for a good linear predictor
\begin{equation}
    f_{\bbeta}(\bx)=\bx^\top\bbeta,\qquad \bbeta\in\R^p.
\end{equation}
When the dimension $p$ is of the same order as (or larger than) the size $n$ of the labeled sample, the
simple empirical risk minimizer (\textit{i.e.}, the least squares estimator) is a poor predictor since it suffers from the curse of
dimensionality. To circumvent this shortcoming, one popular approach is to use the $\ell_1$-penalised empirical
risk minimizer, also known as the lasso estimator \citep{Tib96}: $\hat f^{\rm lasso} = f_{\hat\bbeta{}^{\rm lasso}}$
where\footnote{To ease notation, we assume that both labels and features are centered, that is $\esp[Y]=0$ and $\esp[\bX]=0$,
so that there is no need to include an intercept in the linear combination $f_{\bbeta}$.}
\begin{equation} \label{lasso:original}
\hat\bbeta{}^{\rm lasso}\in\underset{\bbeta\in\R^p}{\arg\min}
\left\{\frac1n\|\bY-\Xlab\bbeta\|^2_2+2\lambda \|\bbeta\|_1 \right\},
\end{equation}
where $\lambda>0$ stands for a tuning parameter and
\begin{equation}
\label{Yx}
\bY=\begin{bmatrix}
Y_{1}\\
\vdots\\
Y_{n}
\end{bmatrix},\quad
\Xlab=\begin{bmatrix}
\bX^{\top}_{1}\\
\vdots\\
\bX^{\top}_{n}
\end{bmatrix}.
\end{equation}
Statistical properties of the lasso with regard to the prediction error
were studied in many papers, the most relevant (to our purposes) of which  will be discussed in the next section.
We also refer the reader to \citep{bookBuhlman2011} for an overview of related topics. The rationale behind this
approach is that (a) the term $\frac1n\|\bY-\Xlab\bbeta\|^2_2-\esp[\xi^2]$ is an unbiased estimator of the excess risk
$\excessRisk(f_{\bbeta})$ and (b) the $\ell_1$-penalty term favors predictors $f_{\bbeta}$ defined via
a (nearly) sparse vector $\bbeta$.

The prediction rules we are going to analyze in the present work are suitable adaptations of the (supervised) lasso to
the semi-supervised and the transductive settings. More precisely, we consider the estimator
\begin{equation} \label{lasso}
\hat\bbeta\in\underset{\bbeta\in\R^p}{\arg\min}
\left\{\|\bfA\bbeta\|^2_2-\frac2n\bY^\top\Xlab\bbeta +2\lambda \|\bbeta\|_1 \right\},
\end{equation}
where $\lambda>0$ and $\bfA\in\R^{p\times p}$ are parameters to be chosen by the statistician.
This definition is based on the following observation. The unlabeled sample may be used to get an improved estimator of
the excess risk $\excessRisk(f_{\bbeta}) = \esp[f^\star(\bX)^2]-2\esp[Y\bX^\top]\bbeta+\bbeta^\top\bfSigma \bbeta$, where
$\bfSigma = \esp[\bX\bX^\top]$ is the $p\times p$ covariance matrix. Indeed, the population covariance matrix can be estimated using
both labeled and unlabeled data. A similar observation holds for the transductive excess risk $\transRisk(f_{\bbeta})$.

Denoting by $\hat\bfSigma_{\rm lab}$ the empirical covariance matrix based on the labeled sample, that is
$$
\hat\bfSigma_{\rm lab}=
\frac1n\sum_{i=1}^n \bX_i\bX_i^\top,
$$
one checks that the vector $\hat\bbeta$ coincides with the lasso estimator  \eqref{lasso:original} when $\bfA = \hat\bfSigma_{\rm lab}^{1/2}$.
If an unlabeled sample is available, the foregoing discussion suggests a different choice for the matrix $\bfA$. This choice depends on the
setting under consideration. Namely, defining the matrices
$$
\hat\bfSigma_{\rm all}=\frac1N\sum_{i=1}^N \bX_i\bX_i^\top
\qquad \text{ and }
\qquad
\hat\bfSigma_{\rm unlab}=\frac1{N-n}\sum_{i=n+1}^N \bX_i\bX_i^\top,
$$
we use $\bfA=\hat\bfSigma_{\rm all}^{1/2}$ and $\bfA=\hat\bfSigma_{\rm unlab}^{1/2}$ in the semi-supervised and
transductive settings, respectively.

The following two assumptions made on the probability distribution $P$ will be repeatedly used throughout this work.\vspace{-6pt}
\begin{description}
\item[(A1)\label{A1}]The random variables $Y$ and $\bX$ have zero mean and finite variance. Furthermore, all
        the coordinates $X^{j}$ of the random vector $\bX$ satisfy $\esp[(X^{j})^2]=1$.
\item[(A2)\label{A2}]The random variables $Y$ and $X^{j}$ are almost surely bounded. That is, there exist
        constants $B_Y$ and $B_X$ such that $\prob\big(|Y|\le B_Y; \max_{j\in[p]}|X^{j}|\le B_X\big)=1$.
\end{description}
\vspace{-6pt}

Assumption \ref{A1}  is fairly mild, since one can get close to it by centering and scaling the observed labels and features. For features, the centering and the scaling
may be performed using the sample mean and the sample variance computed over the whole data-set. It is however important to require this assumption, since its violation
may seriously affect the quality of the $\ell_1$-penalized least-squares estimator $\hat\bbeta$, unless the terms $|\beta_j|$ of the $\ell_1$-norm are weighted according to the
magnitude of the corresponding feature $X^{j}$. The second assumption is less crucial both for practical and theoretical purposes, given that its primary aim is to allow for
user-friendly, easy-to-interpret theoretical guarantees. In most situations, even if assumption \ref{A2} is violated, the predictor $f_{\hat\bbeta}$ does have a
fairly small prediction error rate.

The main contributions of the present work are:\vspace{-6pt}
\begin{itemize}
\item Review of the relevant recent literature on the off-sample performance of the lasso in the prediction problem.
\item Non-asymptotic bounds for the prediction error of the lasso in the semi-supervised and transductive settings that guarantee the fast rate under the restricted eigenvalue
condition. We did an effort for keeping the results easy to understand and to obtain small constants. These results are simple enough to be taught to graduate students.
\item Oracle inequalities in expectation for the prediction error of the lasso. To the best of our knowledge, such results were not available in the literature until
the very recent paper \citep{BLT16}.
\end{itemize}

\vspace{-4pt}

To give a foretaste of the results detailed in the rest of this work, let us state and briefly discuss a risk bound in the
semi-supervised setting (the complete form of the result is provided in \Cref{thssms}). For a matrix $\bfA$, we denote by
$\|\bfA\|$ its largest singular value and by $\kappa_\bfA$ the compatibility constant (see \Cref{sec:2} for a precise definition).

\begin{theorem*}
    Let assumption \ref{A1} be fulfilled and let the random variables $Y$, $X^j$ be bounded in absolute value by 1.
    For a prescribed tolerance level  $\delta\in(0,1)$, assume that the overall sample size $N$ and the tuning parameter $\lambda$
    satisfy $N\ge 18p\Vert \bfSigma^{-1} \Vert\log({3p}/{\delta})$ and
    \begin{equation}
        \lambda \ge 4\left(\frac{2\log(6p/\delta)}{n}\right)^{1/2}+\frac{8\log(6p/\delta)}{3n}.
    \end{equation}
    Then, for every $J\subseteq \{1,\ldots,p\}$, with probability at least $1-\delta$, the estimator $\hat\bbeta$ defined in \eqref{lasso}
    above with $\bfA=\hat\bfSigma_{\rm all}^{1/2}$ satisfies
    \begin{align}\label{in:0}
    \excessRisk (f_{\hat\bbeta}) \le
    \inf_{\bbeta\in\R^p}
    \bigg\{
        \excessRisk (f_{\bbeta})
        + 4\lambda\|\bbeta_{J^c}\|_1
        + \frac{9\lambda^2|J|}{2\kappa_{\hat\bfSigma_{\rm all}}(J,3)}
    \bigg\}.
    \end{align}
\end{theorem*}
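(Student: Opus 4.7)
My plan combines a KKT-based basic inequality, three concentration ingredients, and the standard compatibility endgame adapted to the matrix $\bfA = \hat\bfSigma_{\rm all}^{1/2}$. Writing $\boldsymbol{h} := \hat\bbeta - \bbeta$ for an arbitrary comparator $\bbeta \in \R^p$ and using $\bfA^\top\bfA = \hat\bfSigma_{\rm all}$, the optimality of $\hat\bbeta$ in \eqref{lasso} together with the algebraic identity $\|\bfA\hat\bbeta\|_2^2 - \|\bfA\bbeta\|_2^2 = \|\bfA\boldsymbol{h}\|_2^2 + 2\bbeta^\top\hat\bfSigma_{\rm all}\boldsymbol{h}$ yields the basic inequality $\|\bfA\boldsymbol{h}\|_2^2 \le 2\langle\boldsymbol{g},\boldsymbol{h}\rangle + 2\lambda(\|\bbeta\|_1 - \|\hat\bbeta\|_1)$ with $\boldsymbol{g} := \tfrac{1}{n}\Xlab^\top\bY - \hat\bfSigma_{\rm all}\bbeta$. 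I would split $\boldsymbol{g} = \boldsymbol{g}_1 + \boldsymbol{g}_2 + \boldsymbol{g}_3$ into the centered label noise $\boldsymbol{g}_1 := \tfrac{1}{n}\Xlab^\top\bY - \esp[Y\bX]$, the centered covariance-fluctuation error $\boldsymbol{g}_2 := (\bfSigma - \hat\bfSigma_{\rm all})\bbeta$, and the population risk gradient $\boldsymbol{g}_3 := \esp[Y\bX] - \bfSigma\bbeta$. A direct expansion of $\excessRisk$ produces the key identity $2\langle\boldsymbol{g}_3,\boldsymbol{h}\rangle = \boldsymbol{h}^\top\bfSigma\boldsymbol{h} + \excessRisk(f_\bbeta) - \excessRisk(f_{\hat\bbeta})$, and combined with the algebraic rearrangement $2\langle\boldsymbol{g}_2,\boldsymbol{h}\rangle + \boldsymbol{h}^\top\boldsymbol{M}\boldsymbol{h} = \hat\bbeta^\top\boldsymbol{M}\hat\bbeta - \bbeta^\top\boldsymbol{M}\bbeta$ (with $\boldsymbol{M} := \bfSigma - \hat\bfSigma_{\rm all}$) it converts the basic inequality into
\begin{equation*}
\excessRisk(f_{\hat\bbeta}) + 2\lambda\|\hat\bbeta\|_1 \le \excessRisk(f_\bbeta) + 2\lambda\|\bbeta\|_1 + 2\langle\boldsymbol{g}_1,\boldsymbol{h}\rangle + \bigl(\hat\bbeta^\top\boldsymbol{M}\hat\bbeta - \bbeta^\top\boldsymbol{M}\bbeta\bigr).
\end{equation*}

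The stochastic content lives on a good event of probability at least $1-\delta$ on which three bounds hold simultaneously (each contributing at most $\delta/3$). First, Bernstein's inequality and a union bound over coordinates, leveraging $|YX^j| \le 1$ and the lower bound imposed on $\lambda$, force $\|\boldsymbol{g}_1\|_\infty \le \lambda/4$. Second, a matrix Chernoff or Bernstein bound applied to $\hat\bfSigma_{\rm all} = N^{-1}\sum_i\bX_i\bX_i^\top$, using $\|\bX_i\bX_i^\top\| \le p$ together with the sample-size hypothesis $N \ge 18p\|\bfSigma^{-1}\|\log(3p/\delta)$, delivers the relative spectral sandwich $\tfrac{2}{3}\bfSigma \preceq \hat\bfSigma_{\rm all} \preceq \tfrac{4}{3}\bfSigma$, equivalently $\|\bfSigma^{-1/2}\boldsymbol{M}\bfSigma^{-1/2}\| \le 1/3$. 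Third, $|Y|\le 1$ implies $|f^\star|\le 1$, so the triangle inequality in $L_2(P_X)$ gives $\esp[f_\bbeta(\bX)^2]^{1/2} \le 1 + \sqrt{\excessRisk(f_\bbeta)}$; in tandem with the spectral sandwich this dominates $|\hat\bbeta^\top\boldsymbol{M}\hat\bbeta - \bbeta^\top\boldsymbol{M}\bbeta|$ by a small multiple of $\excessRisk(f_\bbeta) + \excessRisk(f_{\hat\bbeta})$ plus a benign additive constant, allowing a definite fraction of $\excessRisk(f_{\hat\bbeta})$ to be moved back to the LHS.

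The endgame is the textbook lasso argument with $\hat\bfSigma_{\rm all}$ playing the role of the design Gram matrix. The $\ell_1$ triangle decomposition $\|\bbeta\|_1 - \|\hat\bbeta\|_1 \le \|\boldsymbol{h}_J\|_1 + 2\|\bbeta_{J^c}\|_1 - \|\boldsymbol{h}_{J^c}\|_1$, valid for every $J\subseteq\{1,\ldots,p\}$, together with $|2\langle\boldsymbol{g}_1,\boldsymbol{h}\rangle| \le \tfrac{\lambda}{2}\|\boldsymbol{h}\|_1$, induces the familiar dichotomy: either $\|\boldsymbol{h}_{J^c}\|_1 > 3\|\boldsymbol{h}_J\|_1$, in which case the negative $-\|\boldsymbol{h}_{J^c}\|_1$ absorbs every $\ell_1$ contribution and \eqref{in:0} follows from its slow-rate part $4\lambda\|\bbeta_{J^c}\|_1$; or $\boldsymbol{h}$ lies in the cone $\|\boldsymbol{h}_{J^c}\|_1 \le 3\|\boldsymbol{h}_J\|_1$, where the compatibility condition delivers $\|\boldsymbol{h}_J\|_1 \le \sqrt{|J|/\kappa_{\hat\bfSigma_{\rm all}}(J,3)}\,\|\bfA\boldsymbol{h}\|_2$. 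Completing the square $3\lambda\|\boldsymbol{h}_J\|_1 \le \|\bfA\boldsymbol{h}\|_2^2 + \tfrac{9\lambda^2|J|}{4\kappa_{\hat\bfSigma_{\rm all}}(J,3)}$ lets $\|\bfA\boldsymbol{h}\|_2^2$---already implicit in the basic inequality---absorb the quadratic cost and delivers the $\tfrac{9\lambda^2|J|}{2\kappa_{\hat\bfSigma_{\rm all}}(J,3)}$ term of \eqref{in:0}.

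The main obstacle is the cross-term $2\langle\boldsymbol{g}_2,\boldsymbol{h}\rangle$: a naive bound $\|\boldsymbol{g}_2\|_\infty\|\boldsymbol{h}\|_1$ would drag in $\|\bbeta\|_1$, which is uncontrollable uniformly over the oracle comparator. The decisive maneuver is the algebraic identity that repackages $2\langle\boldsymbol{g}_2,\boldsymbol{h}\rangle + \boldsymbol{h}^\top\boldsymbol{M}\boldsymbol{h}$ as the quadratic difference $\hat\bbeta^\top\boldsymbol{M}\hat\bbeta - \bbeta^\top\boldsymbol{M}\bbeta$; this transfers the full burden onto the spectral concentration of $\hat\bfSigma_{\rm all}$, and is precisely why the unlabeled-pool sample-size assumption $N \gtrsim p\|\bfSigma^{-1}\|\log(p/\delta)$ enters the statement.
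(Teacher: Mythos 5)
Your skeleton (basic inequality, decomposition of the noise into $\boldsymbol{g}_1,\boldsymbol{g}_2,\boldsymbol{g}_3$, Bernstein for $\boldsymbol{g}_1$, spectral control of $\hat\bfSigma_{\rm all}$ versus $\bfSigma$, compatibility endgame) matches the paper's, but the way you dispose of the covariance-fluctuation terms breaks the proof. You repackage $2\langle\boldsymbol{g}_2,\boldsymbol{h}\rangle+\boldsymbol{h}^\top\boldsymbol{M}\boldsymbol{h}$ as $\hat\bbeta^\top\boldsymbol{M}\hat\bbeta-\bbeta^\top\boldsymbol{M}\bbeta$ and then bound this by $\|\bfSigma^{-1/2}\boldsymbol{M}\bfSigma^{-1/2}\|\big(\|\bfSigma^{1/2}\hat\bbeta\|_2^2+\|\bfSigma^{1/2}\bbeta\|_2^2\big)$. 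But $\|\bfSigma^{1/2}\hat\bbeta\|_2^2=\esp[f_{\hat\bbeta}(\bX)^2]\le 2\excessRisk(f_{\hat\bbeta})+2\esp[f^\star(\bX)^2]$, so even with $\|\bfSigma^{-1/2}\boldsymbol{M}\bfSigma^{-1/2}\|\le 1/3$ you are left with a constant multiple of $\excessRisk(f_{\bbeta})$ on the right (destroying the leading constant $1$) \emph{and} an additive term of order $\esp[f^\star(\bX)^2]$, which is $O(1)$ and swallows the target rate $s\log p/n$. There is no choice of the spectral tolerance that makes this ``benign'': the additive constant does not shrink with $n$ or $N$. The paper avoids both losses by treating the two pieces separately: the quadratic piece is kept as $\bu^\top(\bfSigma-\hat\bfSigma_{\rm all})\bu$ and bounded \emph{relative to} $\|\bfA\bu\|_2^2$ (on the event $\lambda_{\min}(\bfSigma^{-1/2}\hat\bfSigma_{\rm all}\bfSigma^{-1/2})\ge 2/3$ one gets $\bu^\top(\bfSigma-\hat\bfSigma_{\rm all})\bu-\|\bfA\bu\|_2^2\le-\tfrac12\|\bfA\bu\|_2^2$, which is also why only a one-sided spectral bound is needed, not your two-sided sandwich); and the linear piece $\boldsymbol{g}_2=(\bfSigma-\hat\bfSigma_{\rm all})\bbeta$ is bounded by exactly the ``naive'' route you reject, $\|\boldsymbol{g}_2\|_\infty\|\bu\|_1\le\tfrac\lambda4\|\bu\|_1$. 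Your objection that $\|\bbeta\|_1$ is uncontrollable is resolved by fixing $\bbeta$ to be the minimizer of the oracle right-hand side and invoking \Cref{lem:strong-convexity-minimizer} to get $\|\bfSigma^{1/2}\bbeta\|_2\le B_Y$, whence $\|\bbeta\|_1\le\sqrt{p\|\bfSigma^{-1}\|}\,B_Y$; this is precisely the second place where the hypothesis $N\gtrsim p\|\bfSigma^{-1}\|\log(p/\delta)$ is consumed (via Bernstein for $\boldsymbol{g}_2$, in \Cref{lem:4}), not only in the matrix Chernoff step.

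A second, smaller gap: after your substitutions the term $\|\bfA\boldsymbol{h}\|_2^2$ cancels \emph{exactly} from both sides of your displayed inequality, so there is no residual negative quadratic left to absorb $3\lambda\|\boldsymbol{h}_J\|_1$ in the completing-the-square step of the endgame. To retain it you must use the first-order optimality condition in its strong form (as in \Cref{lem:1}, which yields an extra $-\|\bfA(\hat\bbeta-\bbeta)\|_2^2$ beyond plain minimality), which you invoke by name (``KKT-based'') but do not actually carry into the displayed basic inequality. In the paper half of this quadratic margin survives the bound on $\bu^\top(\bfSigma-\hat\bfSigma_{\rm all})\bu$, which is exactly what produces the constant $\tfrac{9\lambda^2|J|}{2\kappa_{\hat\bfSigma_{\rm all}}(J,3)}$ rather than $\tfrac{9\lambda^2|J|}{4\kappa_{\hat\bfSigma_{\rm all}}(J,3)}$.
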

This result follows in the footsteps of many recent papers such as \citep{KLT11,Sun12,DHL14} among others. The term oracle inequality
refers to the fact that it allows us to compare the excess risk of the predictor $f_{\hat\bbeta}$ to that of the best possible nearly
sparse prediction function. (By nearly sparse we understand here a vector $\bbeta$ such that for a set
$J\subseteq\{1,\ldots,p\}$ of small cardinality the entries of $\bbeta$ with indices in $J^c$ have small magnitude; that is $\|\bbeta_{J^c}\|_1 =
\sum_{j\not\in J} |\beta_j|$ is small.) Indeed, if we denote by $\bar\bbeta$ a nearly $s$-sparse vector in $\RR^p$ such that
the excess risk $\excessRisk(f_{\bar\bbeta})$ is small, then the aforestated risk bound is the sum of three terms having clear
interpretation. The first term, $\excessRisk(f_{\bar\bbeta})$, is a bias term due to the $s$-sparse linear approximation. The second term,
$\lambda\|\bar\bbeta_{J^c}\|$, is the bias due to approximate $s$-sparsity. (Note that it vanishes if $\bar\bbeta$ is exactly
$s$-sparse and $J$ is taken as its support.)
Finally, the third term measures the magnitude of the stochastic error. {\color{red} Assuming the compatibility constant to be bounded away from $0$}, this last term is
of the order $s\log(p)/n$, which is known to be optimal\footnote{More precisely, the optimal rate is $\frac{s\log(1+p/s)}{n}$,
which is of the same order as $\frac{s\log(p)}{n}$ for most values of $s$.} over all possible estimators \citep{Fei10,Raskutti11,Rigollet11,Rigollet12a}.

Inequality  \eqref{in:0} readily shows the advantage of using the unlabeled data: the compatibility constant involved in the last term of
the right hand side is computed for the overall covariance matrix. When the size of the labeled sample is small in regard to the dimension $p$,
the corresponding constant computed for $\hat\bfSigma_{\rm lab}$ may be very close (and even equal) to zero. This may downgrade the fast rate of the
original lasso to the slow rate $\|\bar\bbeta\|_1/\sqrt{n}$. Instead, if a large number of unlabeled features are used, it becomes more plausible
to assume that the compatibility constant is bounded away from zero. In relation with this, it is important to underline that the unlabeled
sample cannot help to improve the fast rate of convergence of the lasso, $s\log(p)/n$, which is optimal in the minimax
sense. The best we can hope to achieve using the unlabeled sample is the relaxation of the conditions guaranteeing the fast rate.
Another worthwhile remark is that the theorem stated above is valid when the size of the unlabeled sample is significantly larger than the
dimension $p$. Interestingly, this condition is not required for getting the analogous result in the transductive set-up.

The rest is as follows. In \Cref{sec:2}, we introduce the notations used throughout the paper. \Cref{sec:5} contains
a review of the relevant literature and discusses the relation of the previous work with our results. \Cref{sec:3}
presents risk bounds for the prediction error of the lasso in the transductive setting, whereas \Cref{sec:4} is
devoted to the analogous results in the semi-supervised setting. Conclusions are made in \Cref{sec:6}. The proofs are
postponed to \Cref{sec:7}.

\section{Notations}\label{sec:2}
In the sequel, for any integer $k$ we denote by $[k]$ the set $\{1,\ldots,k\}$. For any $q\in [1,+\infty]$ the notation $\Vert \bv\Vert_q$ refers
to the $\ell_q$-norm of a vector $\bv$ belonging to an Euclidean space $\R^k$ with arbitrary dimension $k$. Since there is no risk of confusion, we omit the dependence on $k$ in
the notation. For any square matrix $\bfA\in\R^{p\times p}$ we denote by $\bfA^+$ its Moore-Penrose pseudoinverse and by $\Vert \bfA\Vert$ its spectral norm defined by
\begin{equation}
\label{sn}
\Vert \bfA\Vert=\max_{\Vert \bv\Vert_2=1}\Vert \bfA\bv\Vert_2
\end{equation}
We use boldface italic letters for vectors and boldface letters for matrices. Throughout the manuscript, the index $j$ will be used for referring to $p$ features, whereas
the index $i$ will refer to the observations ($i\in [n]$ or $i\in [N]$). For any set of indices $J\subseteq [p]$ and any $\bbeta=(\bbeta_1,\dots,\bbeta_p)^{\top}\in\R^p$,
we define $\bbeta_J$ as the $p$-dimensional vector whose $j$-th coordinate equals $\bbeta_j$ if $j\in J$ and $0$ otherwise. We denote the cardinality of any $J\subseteq[p]$
by $\vert J\vert$. Also, we set ${\rm supp}(\bbeta)=\{j: \bbeta_j\ne0\}$. In particular, whenever $f^{\star}(\bx) = \bx^\top\bbeta^{\star}$, we set $J^{\star}={\rm supp}(\bbeta^{\star})$
and $s^{\star}=\vert J^{\star}\vert$. For $J\subseteq[p]$ and $c>0$, we introduce the compatibility constants
\begin{equation}
\label{Cc}
\kappa_\bfA(J,c)=\inf\bigg\{\frac{c^{2}\vert J\vert\ \Vert \bfA^{1/2}\bv\Vert_2^{2}}{(c\Vert \bv_{J}\Vert_{1}-
\Vert \bv_{J^{c}}\Vert_{1})^{2}}: \bv\in\R^{p},\Vert \bv_{J^{c}}\Vert_{1}<c\Vert \bv_{J}\Vert_{1}\bigg\}
\end{equation}
and
\begin{equation}
\label{barCc}
\bar\kappa_\bfA(J,c)=\inf\bigg\{\frac{\vert J\vert\ \Vert \bfA^{1/2}\bv\Vert_2^{2}}{\Vert \bv_{J}\Vert^{2}_{1}}:
\bv\in\R^{p},\Vert \bv_{J^{c}}\Vert_{1}<c\Vert \bv_{J}\Vert_{1}\bigg\}.
\end{equation}
One easily checks that these two constants are of the same order of magnitude
in the sense that
$$
\frac{\bar c^2}{(\bar c+c)^2}\kappa_\bfA(J,\bar c+c)
\le \bar\kappa_\bfA(J,c) \le \kappa_\bfA(J,c)
$$
for every $c,\bar c>0$. These constants are slightly larger\footnote{We recall
here that a larger compatibility constant provides a better risk bound.} than the restricted eigenvalues
\citep{BRT} defined by
$$
\kappa^{\rm RE}_\bfA(J,c)=\inf\big\{\Vert \bfA^{1/2}\bv\Vert_2^{2}:\,
\Vert \bv_{J^{c}}\Vert_{1}\le c\Vert \bv_{J}\Vert_{1}\ \text{and}\ \Vert \bv_{J}\Vert_{2}=1\big\}.
$$
For more details, we refer the reader to \cite{VandeGeerConditionLasso09}.


\section{Brief overview of related work}\label{sec:5}

The material of this paper builds on the shoulders of giants and this section aims at providing a unified overview of
some of the most relevant results in our setting, without having the ambition of being exhaustive.
For each of the selected papers, we will
discuss its strengths and limitations in relation with the results presented further in this work.

Some recent results, obtained in the context of matrix regression, can be specialized to our problem and should be
put in perspective with our contribution.  For instance, a large part of Chapter 9 in \citep{koltchinskii2011} is devoted
to the problem of assessing the off-sample excess risk of the trace-norm penalized empirical risk minimizer in the setting
of trace regression with random design. One can arguably consider that setting as an extension of the random design regression
problem by restricting attention to the set of diagonal matrices. Then the estimator studied in \citet{koltchinskii2011}
coincides with the lasso estimator \eqref{lasso:original}. With our notations, the main result of Chapter 9
in \citep{koltchinskii2011} reads as follows.

\begin{theorem}[Theorem 9.3 in \citealp{koltchinskii2011}]
\label{thm:Kol1} Assume that Assumptions \ref{A1} and \ref{A2} hold. Then there exist universal positive
constants $c_1$ and $c_2$ such that, if
$$
\lambda\ge c_1B_X \max \left\{ {\frac{B_Y \log{(2p/\delta)}}{n}} ,\Big(\frac{B_Y \log{(2p/\delta)}}{n}\Big)^{1/2}\right\}
$$
for some $\delta\in(0,1)$, the estimator \eqref{lasso:original} satisfies,
\begin{align}
\excessRisk(f_{\hat\bbeta})
& \leq \inf_{\bbeta \in \mathbb{R}^{p}} \bigg \{ 2 \excessRisk(f_{\bbeta})
+ c_2 \bigg [ \frac{\Vert \bbeta \Vert_0\lambda^2}{\bar\kappa_{\bfSigma}(\text{\rm supp}(\bbeta), 5) }
+  {\Big(\Vert \bbeta \Vert_1  \vee \frac{q(\lambda)}{\lambda}\Big)\!}^2\, \frac{\log(k/\delta)\log(n)}{n}+\frac1n\bigg] \bigg \},
\end{align}
with probability larger than $1-\delta$, where
$$k =  \log(n\vee p\vee B_Y) \vee |\log(2\lambda)| \vee 2\quad\mbox{and}\quad
q(\lambda)=\inf_{\bbeta\in\RR^p} \big(\excessRisk(f_{\bbeta})+2\lambda\|\bbeta\|_1\big).$$
\end{theorem}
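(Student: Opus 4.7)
The plan is to follow the classical route for random-design $\ell_1$-penalised regression: start from the basic first-order inequality satisfied by the lasso, split the resulting stochastic terms into a plain noise process and a more delicate multiplier process, control them via sharp concentration, and close with the compatibility constant. Concretely, the minimality of $\hat\bbeta$ in \eqref{lasso:original} gives, for every $\bbeta\in\R^p$,
$$\tfrac{1}{n}\|\bY-\Xlab\hat\bbeta\|_2^2+2\lambda\|\hat\bbeta\|_1 \le \tfrac{1}{n}\|\bY-\Xlab\bbeta\|_2^2+2\lambda\|\bbeta\|_1.$$
Writing $Y_i=f^\star(\bX_i)+\xi_i$, expanding the squares, and subtracting $\excessRisk(f_{\hat\bbeta})$ and $\excessRisk(f_{\bbeta})$ from both sides produces a deterministic inequality
$$\excessRisk(f_{\hat\bbeta})+2\lambda\|\hat\bbeta\|_1 \le \excessRisk(f_{\bbeta})+2\lambda\|\bbeta\|_1+2\nu_n(\hat\bbeta-\bbeta)+R_n(\hat\bbeta,\bbeta),$$
where $\nu_n(\bu)=\tfrac{1}{n}\sum_{i=1}^n\xi_i\bX_i^\top\bu$ is the noise empirical process and $R_n$ collects the gaps between empirical and population quadratic forms in the directions $f_{\hat\bbeta}-f_{\bbeta}$ and $f_{\bbeta}-f^\star$.

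The noise process is the easy part: under Assumption \ref{A2} the variables $\xi_iX_i^j$ are bounded, so a coordinate-wise Bernstein inequality gives $\max_{j\in[p]}|\tfrac{1}{n}\sum_i\xi_iX_i^j|\le\lambda/2$ with probability at least $1-\delta/2$ for the prescribed $\lambda$, and hence $|\nu_n(\hat\bbeta-\bbeta)|\le\tfrac{\lambda}{2}\|\hat\bbeta-\bbeta\|_1$. The residual $R_n$, by contrast, has a modulus of continuity of the type $\|f_{\hat\bbeta}-f_{\bbeta}\|_{L_2(P_X)}\cdot\|\hat\bbeta-\bbeta\|_1$, and since $\|\hat\bbeta\|_1$ is random, a single application of Talagrand/Bousquet to a fixed class does not suffice.

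This multiplier step is the main obstacle, and it is precisely where the peculiar factors $(\|\bbeta\|_1\vee q(\lambda)/\lambda)^2\log(k/\delta)\log(n)/n$ and the doubling constant $2$ in front of $\excessRisk(f_{\bbeta})$ originate. My plan is to use the standard peeling (slicing) device: partition $\R^p$ into geometrically growing $\ell_1$-shells $\{2^{m-1}\tau\le\|\bu\|_1\le 2^m\tau\}$, apply Talagrand's concentration inequality to the supremum of the multiplier process over each shell, and take a union bound over $m$. The peeling is cut off at $m\lesssim\log n$ because the basic inequality together with the boundedness of $Y$ and $\bX$ forces the a priori bound $\lambda\|\hat\bbeta\|_1\lesssim q(\lambda)$, whose inversion is exactly the origin of the factor $q(\lambda)/\lambda$; the $\log(k/\delta)$ is the resulting union-bound price over the $O(\log n)$ layers, and the additive $1/n$ in the theorem is the Talagrand-type remainder from the same step. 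The contamination constant $2$ is produced by an AM-GM split $2ab\le\tfrac{1}{2}a^2+2b^2$, used to absorb the piece $\|f_{\hat\bbeta}-f_{\bbeta}\|_{L_2(P_X)}^2$ into a small multiple of $\excessRisk(f_{\hat\bbeta})$ on the left-hand side at the price of doubling $\excessRisk(f_{\bbeta})$ on the right.

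Once both empirical processes are controlled on a common event of probability at least $1-\delta$, the closing steps are routine. Plugging the bounds back into the deterministic inequality gives something of the shape
$\tfrac{1}{2}\excessRisk(f_{\hat\bbeta})+\lambda\|(\hat\bbeta-\bbeta)_{J^c}\|_1 \le 2\excessRisk(f_{\bbeta})+3\lambda\|\bbeta_{J^c}\|_1+\lambda\|(\hat\bbeta-\bbeta)_J\|_1+(\text{peeling residual})$
with $J=\text{supp}(\bbeta)$. If the cone inclusion $\|(\hat\bbeta-\bbeta)_{J^c}\|_1\le 5\|(\hat\bbeta-\bbeta)_J\|_1$ holds, the definition of $\bar\kappa_{\bfSigma}$ together with the standard device $\lambda\|\bv_J\|_1\le\tfrac{1}{4}\|f_{\bv}\|_{L_2(P_X)}^2+\lambda^2|J|/\bar\kappa_{\bfSigma}(J,5)$ converts the remaining linear $\ell_1$-term into the fast-rate quantity $\|\bbeta\|_0\lambda^2/\bar\kappa_{\bfSigma}(\text{supp}(\bbeta),5)$; if the cone inclusion fails, the $\ell_1$-term is dominated by the peeling contribution and the fast-rate slot can simply be dropped. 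Taking the infimum over $\bbeta\in\R^p$ concludes. Throughout, the peeling/multiplier step is the genuinely delicate piece; the rest is essentially bookkeeping.
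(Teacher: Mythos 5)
You should first note that the paper does not prove this statement at all: \Cref{thm:Kol1} is a restatement, in the paper's notation, of Theorem 9.3 of \citet{koltchinskii2011}, presented in the literature-review section purely for comparison with \eqref{in:0}. There is therefore no in-paper proof to measure your attempt against; the relevant benchmark is the argument in Chapter 9 of that book. Against that benchmark, your roadmap is the right one and matches the original in its essential architecture: the basic inequality, the separation of the stochastic error into a coordinatewise noise term (handled by Bernstein) and a multiplier/empirical-process term, localization by peeling combined with Talagrand's concentration inequality, the a priori bound $\lambda\|\hat\bbeta\|_1\lesssim q(\lambda)$ as the source of the factor $q(\lambda)/\lambda$, and the final absorption step via the compatibility constant with cone constant $5$. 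This is exactly why the resulting bound is not sharp (leading constant $2$) and carries the parasitic $\log(k/\delta)\log(n)/n$ term, as the paper itself points out when comparing with \eqref{in:0}.

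That said, as a proof your text is a plan rather than an execution, and the one genuinely delicate step --- which you correctly identify --- is only asserted. Two points would need real work. First, the peeling in \citet{koltchinskii2011} is not over a single $\ell_1$-radius: the specific form $k=\log(n\vee p\vee B_Y)\vee|\log(2\lambda)|\vee 2$ records the number of cells in a peeling over \emph{both} the $\ell_1$-norm of $\bbeta-\hat\bbeta$ and the $L_2(P_X)$-norm of $f_{\bbeta}-f_{\hat\bbeta}$ (equivalently, over the value of the excess risk), and a single-parameter peeling capped at $O(\log n)$ layers would not reproduce the stated constant $k$ nor the correct interaction between the two radii in the modulus of continuity of the quadratic empirical process. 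Second, the a priori bound you invoke, $\lambda\|\hat\bbeta\|_1\lesssim q(\lambda)$, does not follow from the basic inequality alone (which only yields $\lambda\|\hat\bbeta\|_1\le \frac{1}{2n}\|\bY\|_2^2\le B_Y^2/2$); obtaining the sharper bound with $q(\lambda)$ requires a preliminary concentration argument relating the empirical penalized risk to its population counterpart, which must be established on the same event before the peeling can be cut off. Neither issue is fatal --- both are resolved in the cited source --- but as written your argument does not yet constitute a proof of the precise statement, only a correct identification of the route to it.
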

This result can be briefly compared to the risk bound in \eqref{in:0}.
The main advantages of this result is that (a) it is established under much weaker assumptions on the boundedness of the random variables
$\bX$ and $Y$ than those of Assumption \ref{A2}, (b) it holds not only for the vector regression but also for matrix regression, (c) it contains
no restriction on the sample size  and (d) it involves the compatibility constant of the population covariance matrix $\bfSigma$. On the
negative side, the oracle inequality in \Cref{thm:Kol1} is not sharp since the factor in front of $\excessRisk(f_{\bbeta})$ is not equal to one and,
more importantly, the rate of convergence of the remainder term is sub-optimal in most situations. Indeed, if the best linear predictor corresponds
to an $s$-sparse vector the nonzero entries of which are of the same order, then the term $\|\bbeta\|_1^2\log(k/\delta)\log(n)/n$,
present in the right hand side, is of order $s^2\log(n)\log\log(n+p)/n$, whereas the remainder term in \eqref{in:0} is of smaller order
$s\log(p)/n$.

On a related note, \citet{KLT11} establish sharp oracle inequalities for the trace-norm penalized least-squares estimator in the problem of
matrix estimation and completion under low rank assumption. Using our notation, Theorem 2 in \citep{KLT11} yields the following result.

\begin{theorem}[\citealp{KLT11}]
    \label{thKLT}
    Assume that the matrix $\bfSigma = \esp[\bX\bX^\top]$ is known
    and let $\hat\bbeta$ be as in \eqref{lasso} with $\bfA = \bfSigma^{1/2}$. Suppose in addition that Assumption \ref{A2} holds and that, for $\delta\in(0,1)$,
    $$
    \lambda \ge 4 B_Y \Big(\frac{\log(p/\delta)}{n}\Big)^{1/2}\bigg[1 + \frac{B_X}{3} \Big(\frac{\log(p/\delta)}{n}\Big)^{1/2}\bigg].
    $$
    Then, with probability larger than $1-\delta$, we have
    \begin{equation}
        \excessRisk (f_{\hat\bbeta}) \le
        \inf_{J\subseteq [p]}
        \inf_{\bbeta\in\R^p}
        \bigg\{
            \excessRisk (f_{\bbeta})
            + 4\lambda\|\bbeta_{J^c}\|_1
            + \frac{9\lambda^2|J|}{4\kappa_{\bfSigma}(J,3)}
        \bigg\}.
        \label{soi:KLT}
    \end{equation}
\end{theorem}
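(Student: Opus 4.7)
The plan is to follow the standard KKT-based derivation of a sharp oracle inequality, exploiting the fact that $\bfA = \bfSigma^{1/2}$ makes the quadratic part of \eqref{lasso} coincide with the population inner product $\bbeta^\top\bfSigma\bbeta$. Setting $u^\star = \esp[Y\bX]$, $\hat u = \tfrac1n\Xlab^\top\bY$, and $\bv = \hat\bbeta - \bbeta$, the subgradient optimality condition provides $\hat g\in\partial\|\hat\bbeta\|_1$ with $\bfSigma\hat\bbeta - \hat u + \lambda\hat g = 0$, while the algebraic identity $\hat\bbeta^\top\bfSigma\hat\bbeta - \bbeta^\top\bfSigma\bbeta = 2\bv^\top\bfSigma\hat\bbeta - \bv^\top\bfSigma\bv$ combined with $\excessRisk(f_\bbeta) = \bbeta^\top\bfSigma\bbeta - 2u^{\star\top}\bbeta + \esp[f^\star(\bX)^2]$ yields the sharp identity
\begin{equation*}
\excessRisk(f_{\hat\bbeta}) - \excessRisk(f_{\bbeta}) + \bv^\top\bfSigma\bv = 2(\bfSigma\hat\bbeta - u^\star)^\top\bv = 2(\hat u - u^\star)^\top\bv - 2\lambda\hat g^\top\bv.
\end{equation*}
Combining this with the subgradient inequality $\hat g^\top\bv \ge \|\hat\bbeta\|_1 - \|\bbeta\|_1$ is the deterministic starting point of the analysis.

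The only stochastic ingredient is the event $\mathcal G = \{\|\hat u - u^\star\|_\infty \le \lambda/2\}$. Each coordinate $\hat u_j - u^\star_j$ is a centered empirical mean of iid random variables bounded in absolute value by $2B_XB_Y$ whose variance is at most $\esp[(YX^j)^2] \le B_Y^2$ by \ref{A1}--\ref{A2}, so that Bernstein's inequality and a union bound over $j\in[p]$ show $\prob(\mathcal G)\ge 1-\delta$ under the stated lower bound on $\lambda$. On $\mathcal G$, Hölder's inequality bounds the noise term by $\lambda\|\bv\|_1$, and the triangle-inequality decomposition $\|\hat\bbeta\|_1 - \|\bbeta\|_1 \ge \|\bv_{J^c}\|_1 - \|\bv_J\|_1 - 2\|\bbeta_{J^c}\|_1$ converts the deterministic identity into the core cone inequality
\begin{equation*}
\excessRisk(f_{\hat\bbeta}) - \excessRisk(f_{\bbeta}) + \bv^\top\bfSigma\bv + \lambda\|\bv_{J^c}\|_1 \le 3\lambda\|\bv_J\|_1 + 4\lambda\|\bbeta_{J^c}\|_1.
\end{equation*}

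A dichotomy then finishes the argument. If $\|\bv_{J^c}\|_1 \ge 3\|\bv_J\|_1$, the right-hand side is dominated by $\lambda\|\bv_{J^c}\|_1 + 4\lambda\|\bbeta_{J^c}\|_1$, and using $\bv^\top\bfSigma\bv \ge 0$ immediately gives $\excessRisk(f_{\hat\bbeta}) - \excessRisk(f_{\bbeta}) \le 4\lambda\|\bbeta_{J^c}\|_1$. Otherwise $\bv$ belongs to the cone underlying \eqref{Cc} with $c = 3$, which supplies $(3\|\bv_J\|_1 - \|\bv_{J^c}\|_1)^2 \le 9|J|\bv^\top\bfSigma\bv/\kappa_\bfSigma(J,3)$; dropping $\lambda\|\bv_{J^c}\|_1 \ge 0$ from the cone inequality and applying Young's inequality $xy \le x^2 + y^2/4$ with $x = \sqrt{\bv^\top\bfSigma\bv}$ and $y = 3\lambda\sqrt{|J|/\kappa_\bfSigma(J,3)}$ absorbs the $\bv^\top\bfSigma\bv$ term into the left-hand side, leaving exactly $\tfrac{9\lambda^2|J|}{4\kappa_\bfSigma(J,3)} + 4\lambda\|\bbeta_{J^c}\|_1$ on the right. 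Taking the infimum over $\bbeta$ and $J$ delivers \eqref{soi:KLT}.

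The main obstacle lies in obtaining the sharp constant $1$ in front of $\excessRisk(f_{\bbeta})$: this forces the use of the quadratic identity $2(\bfSigma\hat\bbeta - u^\star)^\top\bv - \bv^\top\bfSigma\bv$ rather than a plain Cauchy--Schwarz bound on the excess risk, and requires keeping the ``free'' quadratic term $\bv^\top\bfSigma\bv$ on the left of the cone inequality throughout, so that Young's inequality can absorb it with the exact coefficient producing the constant $9/4$. Beyond that, the only care required is in tuning the constants of the Bernstein bound, which is what fixes the precise form of the threshold for $\lambda$ stated in the theorem.
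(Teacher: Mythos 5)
Your proof is correct and follows essentially the same route as the paper's: the basic inequality you derive from the KKT condition and the quadratic expansion is exactly the conclusion of the paper's \Cref{lem:1} (obtained there by a Moreau--Rockafellar argument valid for any convex penalty), and the remaining steps --- Bernstein plus a union bound for $\|\hat u - u^\star\|_\infty$, the triangle-inequality decomposition, the dichotomy on the cone condition, and the compatibility/Young step producing $9\lambda^2|J|/(4\kappa_{\bfSigma}(J,3))$ --- coincide with \Cref{lem:2} and the paper's argument for this theorem. One wording slip to fix: in the cone case you must move $\lambda\|\bv_{J^c}\|_1$ to the right-hand side so that the compatibility constant is applied to the combination $3\|\bv_J\|_1-\|\bv_{J^c}\|_1$ (as your displayed bound in fact does), rather than ``drop'' it from the left, since $\kappa_{\bfSigma}(J,3)$ does not control $3\|\bv_J\|_1$ alone.
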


The original result \cite[Theorem 2]{KLT11} is slightly different from the aforestated one. In particular, it is
expressed in terms of the restricted eigenvalue constant with respect to the population covariance matrix $\bfSigma$.
However, all these differences imply only  minor modifications in the proofs. \Cref{thKLT} is very similar to the risk
bounds that we establish in the present work, but has the obvious shortcoming of requiring the covariance
matrix $\bfSigma$ to be known. In fact, this corresponds to the situation in which infinitely many unlabeled feature vectors
$\bX_{n+1},\bX_{n+2},...$ are available, that is $N=+\infty$. To some extent, one of the
purposes of the present work is to provide risk bounds analogous to the result of \Cref{thKLT}
but valid for a broad range of values of $N$. Note that the choice of the
tuning parameter $\lambda$ advocated by all the aforementioned results is of the same order of
magnitude.

To the best of our knowledge, the only paper establishing risk bounds for a transductive version of the lasso
is \citep{Alquier2012}. In that paper, the authors considered the problem of transductive learning in a linear
model $Y = \bX^\top\bbeta^{\star}+\xi$ under the sparsity constraint. The estimator they studied is slightly
different from ours and is defined by
\begin{equation} \label{lasso:1}
\hat\bbeta\in\underset{\bbeta\in\R^p}{\arg\min}
\left\{\|\hat\bfSigma_{\rm unlab}^{1/2}\bbeta\|^2_2-\frac2n\bY^\top\Xlab\hat\bfSigma_{\rm lab}^+\hat\bfSigma_{\rm unlab}\bbeta +2\lambda \|\bbeta\|_1 \right\}.
\end{equation}
For the predictor $f_{\hat\bbeta}$ based on this estimator, the authors established the following risk bound.

\begin{theorem}[Theorems 4.3 and 4.4 in \citealp{Alquier2012}]
\label{thAlquier2012}
Assume that for some $\bbeta^{\star}\in\R^p$, the conditional distribution of $\xi:=Y-\bX^\top\bbeta^{\star}$ given $\bX$
is Gaussian $\mathcal N(0,\sigma^2)$. Let ${\mathscr E}_1$ be the event ``all the unlabeled features $\{\bX_{n+i}:i\in[N-n]\}$, belong to the linear
span of the labeled features $\{\bX_i:i\in[n]\}$'' and let $\delta\in(0,1)$. Denote by $a_{n,N,p}$ the harmonic mean of the diagonal entries of the matrix
$\hat\bfSigma_{\rm unlab}\hat\bfSigma^{+}_{\rm lab}\hat\bfSigma_{\rm unlab}$. Then the estimator \eqref{lasso:1} with $\lambda = \sigma\sqrt{(2/n)a_{n,N,p}\log(p/\delta)}$ satisfies
$$
\prob\bigg(\transRisk(f_{\hat\bbeta}) \le \frac{72\sigma^2 a_{n,N,p} }{\kappa_{\hat\bfSigma_{\rm unlab}}(J^{\star},3)}\cdot
\frac{s^{\star}\log(p/\delta)}{n}\ \bigg|\ \bfX_{\rm all}\bigg)\ge 1-\delta\quad\text{on}\quad\mathscr E_1.
$$
\end{theorem}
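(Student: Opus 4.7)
The proof is a conditional (on $\bfX_{\rm all}$ and on $\mathscr E_1$) adaptation of the textbook two-event lasso argument, in which the modified objective in \eqref{lasso:1} is designed precisely so that the KKT system collapses to the familiar form.

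The first step is to write the first-order optimality condition for the estimator \eqref{lasso:1}: there exists $\hat g\in\partial\Vert\hat\bbeta\Vert_{1}$ with $\hat\bfSigma_{\rm unlab}\hat\bbeta = (1/n)\hat\bfSigma_{\rm unlab}\hat\bfSigma_{\rm lab}^{+}\Xlab^{\top}\bY - \lambda\hat g$. Substituting $\bY = \Xlab\bbeta^{\star}+\bxi$ gives $(1/n)\Xlab^{\top}\bY = \hat\bfSigma_{\rm lab}\bbeta^{\star} + (1/n)\Xlab^{\top}\bxi$. On the event $\mathscr E_1$, every column of $\hat\bfSigma_{\rm unlab}$ lies in the range of $\hat\bfSigma_{\rm lab}$ and, by symmetry of $\hat\bfSigma_{\rm unlab}$, the algebraic identity $\hat\bfSigma_{\rm unlab}\hat\bfSigma_{\rm lab}^{+}\hat\bfSigma_{\rm lab} = \hat\bfSigma_{\rm unlab}$ holds. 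The KKT equation then reduces to the clean identity $\hat\bfSigma_{\rm unlab}(\hat\bbeta-\bbeta^{\star}) = \bz - \lambda\hat g$, where $\bz := (1/n)\hat\bfSigma_{\rm unlab}\hat\bfSigma_{\rm lab}^{+}\Xlab^{\top}\bxi$ is a conditionally centered Gaussian vector. Taking the inner product with $\hat u:=\hat\bbeta-\bbeta^{\star}$ and using the subdifferential inequality $\hat g^{\top}\hat u \ge \Vert\hat\bbeta\Vert_{1}-\Vert\bbeta^{\star}\Vert_{1}$ yields the basic inequality $\transRisk(f_{\hat\bbeta}) = \hat u^{\top}\hat\bfSigma_{\rm unlab}\hat u \le \bz^{\top}\hat u + \lambda\bigl(\Vert\bbeta^{\star}\Vert_{1}-\Vert\hat\bbeta\Vert_{1}\bigr)$.

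On the favourable event $\mathcal G := \{\Vert\bz\Vert_{\infty}\le\lambda/2\}$ the stochastic term is bounded by $(\lambda/2)\Vert\hat u\Vert_{1}$, and since $\bbeta^{\star}$ is supported on $J^{\star}$ the deterministic term is bounded by $\lambda(\Vert\hat u_{J^{\star}}\Vert_{1}-\Vert\hat u_{J^{\star c}}\Vert_{1})$. Rearranging produces simultaneously the cone inclusion $\Vert\hat u_{J^{\star c}}\Vert_{1}\le 3\Vert\hat u_{J^{\star}}\Vert_{1}$ and the refined bound $\transRisk(f_{\hat\bbeta}) \le (\lambda/2)\bigl(3\Vert\hat u_{J^{\star}}\Vert_{1}-\Vert\hat u_{J^{\star c}}\Vert_{1}\bigr)$. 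The compatibility constant $\kappa_{\hat\bfSigma_{\rm unlab}}(J^{\star},3)$ from \eqref{Cc} then upper bounds the right-hand side by a constant multiple of $\lambda\sqrt{s^{\star}\,\transRisk(f_{\hat\bbeta})/\kappa_{\hat\bfSigma_{\rm unlab}}(J^{\star},3)}$; solving the resulting quadratic inequality in $\sqrt{\transRisk(f_{\hat\bbeta})}$ delivers an oracle bound of the announced shape, proportional to $\lambda^{2}s^{\star}/\kappa_{\hat\bfSigma_{\rm unlab}}(J^{\star},3)$.

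The remaining, and genuinely delicate, step is to calibrate $\lambda$ so that $\prob(\mathcal G\mid\bfX_{\rm all})\ge 1-\delta$ while $\lambda^{2}$ scales with the \emph{harmonic} mean $a_{n,N,p}$ of the diagonal entries $d_{j}:=(\hat\bfSigma_{\rm unlab}\hat\bfSigma_{\rm lab}^{+}\hat\bfSigma_{\rm unlab})_{jj}$ rather than with their maximum. Conditionally on $\bfX_{\rm all}$, the covariance of $\bz$ is exactly $(\sigma^{2}/n)\hat\bfSigma_{\rm unlab}\hat\bfSigma_{\rm lab}^{+}\hat\bfSigma_{\rm unlab}$, so $z_{j}\mid\bfX_{\rm all}\sim\mathcal N(0,(\sigma^{2}/n)d_{j})$. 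A plain coordinate-wise Gaussian tail bound followed by a union bound would only pay $\max_{j}d_{j}$; the harmonic-mean improvement must instead be extracted by a variance-adapted union bound, equivalently by controlling the $\ell_{\infty}$ norm of the standardised vector $(z_{j}/\sqrt{d_{j}})_{j}$ and then exploiting the specific diagonal structure that makes $a_{n,N,p}$ the correct summary of the $d_{j}$'s for this problem. This Gaussian-calibration step is the main obstacle; once it is established, substituting $\lambda^{2}=(2\sigma^{2}/n)\,a_{n,N,p}\log(p/\delta)$ into the oracle bound derived above yields the stated rate with the numerical constant $72$.
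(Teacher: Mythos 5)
First, a point of order: the paper does not prove this statement. \Cref{thAlquier2012} is quoted verbatim (up to notation) from Theorems 4.3 and 4.4 of \citet{Alquier2012} as part of the literature review in \Cref{sec:5}, so there is no in-paper proof to compare against. Your skeleton --- KKT condition for \eqref{lasso:1}, the identity $\hat\bfSigma_{\rm unlab}\hat\bfSigma_{\rm lab}^{+}\hat\bfSigma_{\rm lab}=\hat\bfSigma_{\rm unlab}$ on $\mathscr E_1$, the basic inequality, the cone condition, and the compatibility constant --- is the standard route and is consistent with how the paper proves its own transductive bound (\Cref{lem:1}, \Cref{lem:2} and \Cref{proptrans:1}), so that part is fine as far as it goes.

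The genuine gap is the calibration step, which you yourself flag as ``the main obstacle'' and then do not carry out. Conditionally on $\bfX_{\rm all}$ you have $z_j\sim\mathcal N\bigl(0,(\sigma^2/n)d_j\bigr)$ with $d_j=(\hat\bfSigma_{\rm unlab}\hat\bfSigma_{\rm lab}^{+}\hat\bfSigma_{\rm unlab})_{jj}$, and the event you need is $\{\|\bz\|_\infty\le\lambda/2\}$ for a \emph{single} scalar $\lambda$. Controlling the standardised vector $(z_j/\sqrt{d_j})_j$ in $\ell_\infty$ and then undoing the standardisation forces $\lambda\gtrsim\sigma\sqrt{(\max_j d_j)\log(p/\delta)/n}$; no union bound, variance-adapted or not, lets you replace $\max_j d_j$ by the harmonic mean $a_{n,N,p}=p/\sum_j d_j^{-1}$, which is always at most the arithmetic mean and can be smaller than $\max_j d_j$ by an arbitrary factor. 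Since the appearance of the harmonic mean is precisely the distinctive (and, as the paper notes when it says $a_{n,N,p}$ ``may be quite large'' and is replaced here ``by a more tractable quantity,'' somewhat delicate) feature of the Alquier--Hebiri result, asserting it without an argument leaves the proof incomplete; the published proof in fact relies on coordinate-dependent weights/normalisations that your single-$\lambda$ event does not reproduce. A secondary symptom of the mismatch: your own chain of inequalities gives $\transRisk(f_{\hat\bbeta})\le\tfrac94\lambda^2 s^\star/\kappa_{\hat\bfSigma_{\rm unlab}}(J^\star,3)=\tfrac92\sigma^2a_{n,N,p}s^\star\log(p/\delta)/(n\kappa)$, a factor $16$ below the stated constant $72$, which indicates that the event and threshold actually used in the source are not the ones you posited.
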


This result is close in spirit to the result that we establish in this work in the setting of transductive learning.
Note however that there are three main differences. First, we do not confine our study to the well-specified situation
in which the Bayes predictor is linear, $f^{\star}(\bx)=\bx^\top\bbeta^{\star}$ for every $\bx\in\R^p$, with a sparse
vector $\bbeta^{\star}$. Second, we avoid the unpleasant restriction that the unlabeled features are linear combinations
of labeled features. Third, we replace the factor $a_{n,N,p}$---which may be quite large---by a more tractable quantity.
This being said, the result of \cite{Alquier2012}---in contrast with our results---does not
require the unlabeled features to be drawn from the same distribution as the labeled features.

We also review a recent result from \citep{LecueMend16}. In that paper, the authors
consider the isotropic case $\bfSigma = \bfI_p$, where $\bfI_p$ stands for the $p\times p$ identity matrix, but impose only weak assumptions
on the moments of the noise. Translated to our notations, their result can be formulated as follows.

\begin{theorem}[Theorem 1.3 in \citealp{LecueMend16}] Let Assumption \ref{A2} be satisfied and let $\bfSigma = \bfI_p$. Let $f_{\bar\bbeta}$ be the
best linear approximation in $L^2(P_X)$ of the regression function $f^\star$, that is  $\bar\bbeta\in\text{\rm arg}\min_{\bbeta\in\RR^p} \excessRisk(f_{\bbeta})$.
Let $\delta\in(0,1)$ be a prescribed tolerance level. There are three constants $c_1(\delta)$, $c_2(\delta,B_X)$ and $c_3(\delta,B_X)$ such
that, if $\bar\bbeta$ is nearly $s$-sparse in the sense that\footnote{We denote by $|\bar\beta|_{(j)}$ the $j$-th largest value of the sequence
$|\bar\beta_1|,\ldots,|\bar\beta_p|$, so that $|\bar\beta|_{(1)}\ge\dots\ge |\bar\beta|_{(p)}$.}
$$
\sum_{j={s+1}}^p |\bar\beta|_{(j)} \le c_1(\delta) B_Y s\Big(\frac{\log(2p)}{n}\Big)^{1/2}
$$
and $\lambda$ is chosen by $\lambda = c_2(\delta,B_X) B_Y \big(\frac{\log(2p)}{n}\big)^{1/2}$, then with probability at least $1-\delta$ the
lasso estimator satisfies
\begin{equation}
\excessRisk(f_{\hat\bbeta}) \le \excessRisk(f_{\bar\bbeta}) + c_3(\delta,B_X)B_Y^2\; \frac{ s\log(2p)}{n}.
\end{equation}
\end{theorem}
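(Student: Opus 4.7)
My plan is to adapt the classical proof of a sharp oracle inequality for the lasso to a setting in which no restricted eigenvalue condition is directly imposed; instead, the isotropy $\bfSigma=\bfI_p$ together with the boundedness \ref{A2} is leveraged through Mendelson's small-ball method to produce a restricted-eigenvalue-type lower bound with high probability. Set $\bfDelta := \hat\bbeta - \bar\bbeta$. Since $\bfSigma=\bfI_p$, one has $\excessRisk(f_{\hat\bbeta})-\excessRisk(f_{\bar\bbeta}) = \|\bfDelta\|_2^2$, so it suffices to bound $\|\bfDelta\|_2^2$ by $c_3 B_Y^2 s\log(2p)/n$. Let $J\subseteq [p]$ index the $s$ largest absolute values of $\bar\bbeta$ and set $r := \|\bar\bbeta_{J^c}\|_1$; the hypothesis gives $r \le c_1 B_Y s\sqrt{\log(2p)/n}$.

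From the optimality of $\hat\bbeta$ in \eqref{lasso:original} and the elementary inequality $\|\bar\bbeta\|_1 - \|\hat\bbeta\|_1 \le \|\bfDelta_J\|_1 - \|\bfDelta_{J^c}\|_1 + 2r$, I would derive the basic inequality
\[\frac{1}{n}\|\Xlab\bfDelta\|_2^2 \le \frac{2}{n}\bigl|\bfDelta^\top\Xlab^\top\bzeta\bigr| + 2\lambda\|\bfDelta_J\|_1 - 2\lambda\|\bfDelta_{J^c}\|_1 + 4\lambda r,\]
where $\bzeta := \bY-\Xlab\bar\bbeta$. Choosing $\lambda = c_2(\delta,B_X)B_Y\sqrt{\log(2p)/n}$ with $c_2$ large enough, a coordinate-wise Bernstein/Hoeffding estimate --- exploiting that $\esp[\zeta_i X_i^j]=0$ by the $L^2$-projection property of $\bar\bbeta$, and that $Y$ and $f^\star(\bX)=\esp[Y|\bX]$ are bounded in absolute value by $B_Y$ --- combined with a union bound over $j\in[p]$ secures, with probability at least $1-\delta/2$, the noise estimate $\|\frac{1}{n}\Xlab^\top\bzeta\|_\infty \le \lambda/2$. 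On this event the basic inequality yields the cone condition $\|\bfDelta_{J^c}\|_1 \le 3\|\bfDelta_J\|_1 + 4r$, whence $\|\bfDelta\|_1 \le 4\sqrt{s}\|\bfDelta\|_2 + 4r$.

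The main obstacle is to produce, with probability $\ge 1-\delta/2$, a lower bound of the form
\[\frac{1}{n}\|\Xlab v\|_2^2 \ge \kappa\|v\|_2^2 - C r^2\]
valid uniformly on the cone $\mathcal C := \{v\in\R^p: \|v\|_1 \le 4\sqrt{s}\|v\|_2 + 4r\}$, for constants $\kappa,C$ depending only on $\delta$ and $B_X$. Because $p$ may be much larger than $n$, classical matrix concentration for $\hat\bfSigma_{\rm lab}-\bfI_p$ is inapplicable; the substitute is Mendelson's small-ball argument, which converts the unit-variance condition $\esp[(\bX^\top v)^2]=\|v\|_2^2$ coming from isotropy together with the fourth-moment bound $\esp[(\bX^\top v)^4] \le B_X^2\|v\|_1^2 \|v\|_2^2$ furnished by \ref{A2} (via Paley--Zygmund) into a uniform lower bound, through a VC-type empirical-process estimate on the indicator process $\mathbf{1}\{|\bX^\top v|\ge \kappa'\|v\|_2\}$. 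This is the non-routine step, and the dependence of the constants on $\delta$ and $B_X$ enters here.

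Finally, combining the two good events and using $\|\bfDelta_J\|_1 \le \sqrt{s}\|\bfDelta\|_2$, the basic inequality reduces to a one-dimensional quadratic inequality
\[\kappa\|\bfDelta\|_2^2 \le 3\lambda\sqrt{s}\,\|\bfDelta\|_2 + 4\lambda r + Cr^2.\]
Solving for $\|\bfDelta\|_2$ and invoking the hypothesis $r \le c_1 B_Y s\sqrt{\log(2p)/n}$ to check that $\lambda r$ and $r^2$ are of order at most $\lambda^2 s = c_2^2 B_Y^2 s\log(2p)/n$, one concludes $\|\bfDelta\|_2^2 \le c_3(\delta, B_X) B_Y^2 s\log(2p)/n$, as required. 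Every step apart from the small-ball / uniform empirical-process argument is a careful repackaging of the classical Bickel--Ritov--Tsybakov scheme for the lasso.
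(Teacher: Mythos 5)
First, a point of order: the statement you are proving is not one of the paper's own results. It is Theorem~1.3 of Lecu\'e and Mendelson, restated in the paper's notation in the related-work section (\Cref{sec:5}), and the paper supplies no proof of it, so there is no in-paper argument to compare yours against. Judged on its own terms, your sketch has the right architecture for a result of this type --- basic inequality from optimality, control of the multiplier (noise) process at level $\lambda/2$, and a uniform lower bound on the empirical quadratic process over a cone --- and you correctly identify the small-ball method as the engine for the last step, which is indeed where the Lecu\'e--Mendelson machinery lives.

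The difficulty is that the crucial step, as you have set it up, does not deliver the stated rate. Your only quantitative input to Paley--Zygmund is $\esp[(\bX^\top\bv)^4]\le B_X^2\|\bv\|_1^2\|\bv\|_2^2$; on the cone $\|\bv\|_1\lesssim\sqrt{s}\,\|\bv\|_2$ this certifies a small-ball probability only of order $1/(B_X^2 s)$, not a constant. The small-ball method then yields a restricted eigenvalue $\kappa$ of order $1/(B_X^2 s)$, and your final quadratic inequality gives $\|\bfDelta\|_2^2\lesssim \lambda^2 s/\kappa^2 \asymp B_X^4 B_Y^2\, s^3\log(2p)/n$, which is off by a factor of $s^2$. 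A direction-uniform small-ball constant depending only on $B_X$ does not follow from Assumption~\ref{A2} plus isotropy via this fourth-moment route; in the original Lecu\'e--Mendelson paper such a condition (equivalently, an $L^4$--$L^2$ norm equivalence with a dimension-free constant) is a separate standing hypothesis, and their proof of Theorem~1.3 does not pass through a cone-plus-restricted-eigenvalue scheme at all but through an exclusion argument over shells of prescribed $\|\bv\|_1$ and $\|\bv\|_2$. To repair your sketch you would need either to add that hypothesis explicitly or to prove the uniform small-ball estimate with an $s$-free constant, which is precisely the content you are currently assuming away. A secondary soft spot: your coordinatewise Bernstein bound for $\frac1n\Xlab^\top\bzeta$ requires an almost-sure bound on $\zeta_i X_i^j$, which is $(B_Y+B_X\|\bar\bbeta\|_1)B_X$ rather than $B_XB_Y$, so the second-order Bernstein term is not dominated by $\lambda/2$ without an implicit sample-size restriction that the statement does not carry.
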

The principal strength of this result is that it is valid under a very weak assumption on the tails of the noise, but it has the shortcoming
of requiring the minimizer of the excess risk to be nearly $s$-sparse with a quite precise upper bound on the authorized non-sparsity bias.
From this point of view, an upper bound of the form \eqref{in:0} provides more information on the robustness of the prediction rule with
respect to the model mis-specification.

The proofs of the results above assess the off-sample prediction error rate of the lasso by using direct arguments. An alternative
approach \citep[adopted, for example, in][]{RWY2010,koltchinskii2011,oliveira2013lower,rudelson2013reconstruction} consists in taking advantage
of the in-sample risk bounds in order to assess the off-sample excess risk. In short, by means of nowadays well-known techniques
\citep[developed in][for instance]{BRT,juditsky2011accuracy,bookBuhlman2011,BCW2014,DHL14} for a well-specified model\footnote{This means that for a sparse
vector $\bbeta^\star$, it holds that $f^\star = f_{\bbeta^\star}$.}, an upper bound on the in-sample risk,{\color{red}[saut de ligne]}
$$\frac1n \|\bfX_{\rm lab}(\hat\bbeta-\bbeta^\star)\|_2^2= \|\hat\bfSigma_{\rm lab}^{1/2}(\hat\bbeta-\bbeta^\star)\|_2^2,$$
is obtained along with proving that the vector $\hat\bbeta-\bbeta^\star$ belongs to the dimension-reduction cone appearing in the
definition of the compatibility constant. Then, using suitably chosen concentration arguments, it is shown that (with high
probability) the compatibility constant  $\kappa_{\hat\bfSigma_{\rm lab}}(J^\star,c)$ of the empirical covariance matrix
$\hat\bfSigma_{\rm lab}$ is lower bounded by a (multiple of a) compatibility constant $\kappa_{\bfSigma}(J^\star,c')$ of the population
covariance matrix, provided that the sparsity $s$ is of order $n/\log(p)$. The main conceptual differences between the aforementioned papers are
in the conditions on the random vectors $\bX_i$. In \citep{RWY2010}, it is assumed that the $\bX_i$'s are Gaussian.
In \citet{rudelson2013reconstruction} and Theorem 9.2 in \cite{koltchinskii2011},
sub-Gaussian and bounded designs are considered, whereas only a bounded moment condition is required in \citet{oliveira2013lower}.
We will not reproduce their results here because (a) they do not allow to account for the robustness to the model mis-specification
and, to a lesser extent, (b) the constants involved in the bounds are not explicit.


\section{Risk bounds in transductive setting}\label{sec:3}

We first consider the case of transductive learning.
From an intuitive point of view, this case is simpler than the case of semi-supervised learning since a prediction
needs to be carried out only for the features in $\mathcal D_{\rm unlabeled}$. Indeed, recall from \eqref{excess1}
that in this context, the excess risk of the linear predictor $f_{\bbeta}$ is defined by
$$
\transRisk(f_{\bbeta})=\frac1{N-n}\sum_{i=n+1}^N
\big(\bX^{\top}_i\bbeta-f^{\star}(\bX_i)\big)^2
$$
and the suitably adapted lasso estimator is given by  choosing  $\bfA = \hat\bfSigma^{1/2}_{\rm unlab}$ in \eqref{lasso}, that is
$$
\hat\bbeta\in\underset{\bbeta\in\R^p}{\arg\min}
\Big\{\|\hat\bfSigma^{1/2}_{\rm unlab}\bbeta\|^2_2-\frac2n\bY^\top\Xlab\bbeta +2\lambda \|\bbeta\|_1 \Big\}.
$$
Note here that the role of the term $\frac2n\bY^\top\Xlab$ is to estimate the term $\frac2{N-n}\sum_{i={n+1}}^N f^\star(\bX_i)\bX_i^\top$,
which appears after developing the square in the excess risk. Since the latter belongs to the image of the matrix $\bfX_{\rm unlab}$, one can
slightly improve the estimator by projecting onto the subspace of $\RR^p$ spanned by the unlabeled vectors $\bfX_i$. This amounts to replacing
the term $\bY^\top\Xlab\bbeta$ by $\bY^\top\Xlab\Pi_{\rm unlab}\bbeta$, where $\Pi_{\rm unlab}$ stands for the orthogonal projector in $\RR^p$
onto $\text{Span}(\bX_{n+1},\ldots,\bX_{N})$. However, from a theoretical point of view, this modification has no impact on the risk
bound stated below. That is why we confine our attention to the lasso estimator that does not use this modification.


\begin{theorem}
\label{thtransductive}
Let Assumptions \ref{A1} and \ref{A2} be fulfilled. Define $n_{\star}= n\wedge(N-n)$ and assume that, for a given $\delta\in (0,1)$, the tuning parameter
$\lambda$ satisfies
\begin{equation}
\lambda\ge 4B_Y\Big(\frac{\log(2p/\delta)}{n_{\star}}\Big)^{1/2}\bigg[1+\frac{B_X}{3}\Big(\frac{\log(2p/\delta)}{n_{\star}}\Big)^{1/2}\bigg].
\label{lambdatrans}
\end{equation}
Then, with probability at least $1-\delta$, the predictor $f_{\hat\bbeta}$ satisfies
\begin{align}
\transRisk(f_{\hat\bbeta}) \le  \inf_{\substack{\bbeta\in\R^p\\ J\subseteq[p]}}\bigg\{\transRisk(f_{\bbeta})
    + 4\lambda\|\bbeta_{J^c}\|_1+ \frac{9\lambda^2|J|}{4\kappa_{\hat\bfSigma_{\rm unlab}}(J,3)}\bigg\}.
    \label{thtrans}
    \end{align}
\end{theorem}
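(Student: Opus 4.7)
The plan is to adapt a KKT-based argument (in the spirit of Koltchinskii--Lounici--Tsybakov) to the empirical covariance $\hat\bfSigma_{\rm unlab}$. Set $\bz^{(1)}=\tfrac{1}{n}\Xlab^{\top}\bY$ and $\bz^{(2)}=\tfrac{1}{N-n}\sum_{i=n+1}^{N}f^{\star}(\bX_i)\bX_i$, so that the lasso objective in \eqref{lasso} reads $\|\hat\bfSigma_{\rm unlab}^{1/2}\bbeta\|_2^{2} - 2(\bz^{(1)})^{\top}\bbeta + 2\lambda\|\bbeta\|_1$ while the transductive risk is $\transRisk(f_{\bbeta})=\|\hat\bfSigma_{\rm unlab}^{1/2}\bbeta\|_2^{2}-2(\bz^{(2)})^{\top}\bbeta + C$ for a $\bbeta$-free constant $C$. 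Fix $\bbeta\in\R^p$ and $J\subseteq[p]$, and set $\bu=\hat\bbeta-\bbeta$, $R=\transRisk(f_{\hat\bbeta})-\transRisk(f_{\bbeta})$ and $\bs{g}=\hat\bfSigma_{\rm unlab}\bbeta-\bz^{(2)}$. A direct expansion yields the identity $R=\|\hat\bfSigma_{\rm unlab}^{1/2}\bu\|_2^{2}+2\bs{g}^{\top}\bu$, which will be used repeatedly.

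The cornerstone is a key inequality sharper than the naive basic inequality. First-order optimality provides $\bv\in\partial\|\hat\bbeta\|_1$ with $\hat\bfSigma_{\rm unlab}\hat\bbeta-\bz^{(1)}=-\lambda\bv$; taking the inner product with $\bu$ and using $\bv^{\top}\hat\bbeta=\|\hat\bbeta\|_1$ together with $\bv^{\top}\bbeta\le\|\bbeta\|_1$ yields $\langle\hat\bfSigma_{\rm unlab}\hat\bbeta-\bz^{(1)},\bu\rangle\le\lambda(\|\bbeta\|_1-\|\hat\bbeta\|_1)$. Writing $\hat\bbeta=\bbeta+\bu$ and $\hat\bfSigma_{\rm unlab}\bbeta-\bz^{(1)}=\bs{g}-(\bz^{(1)}-\bz^{(2)})$ on the left-hand side and multiplying by $2$ delivers
\begin{equation*}
R+\|\hat\bfSigma_{\rm unlab}^{1/2}\bu\|_2^{2}\le 2(\bz^{(1)}-\bz^{(2)})^{\top}\bu+2\lambda\big(\|\bbeta\|_1-\|\hat\bbeta\|_1\big).
\end{equation*}
The extra $\|\hat\bfSigma_{\rm unlab}^{1/2}\bu\|_2^{2}$ carried on the left is precisely what makes the sharp absorption work at the end of the proof.

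I would then control the stochastic term $\|\bz^{(1)}-\bz^{(2)}\|_\infty$. Each coordinate $(\bz^{(1)}-\bz^{(2)})_j$ is the difference of two independent empirical averages with common mean $\esp[YX^{j}]$; Assumptions~\ref{A1}--\ref{A2} ensure that each summand is bounded in absolute value by $B_XB_Y$ and has variance at most $B_Y^{2}$. A Bernstein inequality applied to each average, together with a union bound over the two samples and over $j\in[p]$ with effective sample size $n_\star=n\wedge(N-n)$, shows that the condition \eqref{lambdatrans} on $\lambda$ implies $\prob\bigl(\|\bz^{(1)}-\bz^{(2)}\|_\infty\le\lambda/2\bigr)\ge1-\delta$. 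On this event, Hölder's inequality converts the key inequality into $R+\|\hat\bfSigma_{\rm unlab}^{1/2}\bu\|_2^{2}\le\lambda\|\bu\|_1+2\lambda(\|\bbeta\|_1-\|\hat\bbeta\|_1)$.

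Splitting the $\ell_1$ norms along $J$ and $J^c$ via $\|\hat\bbeta\|_1\ge\|\bbeta_J\|_1-\|\bu_J\|_1+\|\bu_{J^c}\|_1-\|\bbeta_{J^c}\|_1$ then gives
\begin{equation*}
R+\|\hat\bfSigma_{\rm unlab}^{1/2}\bu\|_2^{2}+\lambda\|\bu_{J^c}\|_1\le 4\lambda\|\bbeta_{J^c}\|_1+3\lambda\|\bu_J\|_1.
\end{equation*}
A two-case analysis now closes the proof. If $\|\bu_{J^c}\|_1\ge 3\|\bu_J\|_1$ the right-hand side is already at most $4\lambda\|\bbeta_{J^c}\|_1$, so $R\le 4\lambda\|\bbeta_{J^c}\|_1$; otherwise $\bu$ lies in the compatibility cone and the definition of $\kappa_{\hat\bfSigma_{\rm unlab}}(J,3)$ yields $3\|\bu_J\|_1-\|\bu_{J^c}\|_1\le 3\sqrt{|J|}\,\|\hat\bfSigma_{\rm unlab}^{1/2}\bu\|_2/\sqrt{\kappa_{\hat\bfSigma_{\rm unlab}}(J,3)}$. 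Applying $2ab\le a^{2}+b^{2}$ with $a=3\lambda\sqrt{|J|}/(2\sqrt{\kappa_{\hat\bfSigma_{\rm unlab}}(J,3)})$ and $b=\|\hat\bfSigma_{\rm unlab}^{1/2}\bu\|_2$ bounds $\lambda(3\|\bu_J\|_1-\|\bu_{J^c}\|_1)$ by $\|\hat\bfSigma_{\rm unlab}^{1/2}\bu\|_2^{2}+9\lambda^{2}|J|/(4\kappa_{\hat\bfSigma_{\rm unlab}}(J,3))$, which exactly cancels the extra quadratic term and leaves $R\le 4\lambda\|\bbeta_{J^c}\|_1+9\lambda^{2}|J|/(4\kappa_{\hat\bfSigma_{\rm unlab}}(J,3))$. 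Taking the infimum over $\bbeta$ and $J$ yields \eqref{thtrans}. I expect the most laborious part to be the numerical bookkeeping in the Bernstein step needed to match the precise constants in \eqref{lambdatrans}; conceptually the decisive move is the KKT-based inequality carrying the additional $\|\hat\bfSigma_{\rm unlab}^{1/2}\bu\|_2^{2}$ that makes the AM-GM absorption sharp.
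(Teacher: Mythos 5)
Your deterministic argument is essentially the paper's own proof in disguise: the KKT inequality carrying the extra $\|\hat\bfSigma_{\rm unlab}^{1/2}\bu\|_2^2$ is exactly what the paper's Lemma~\ref{lem:1} delivers (there via Moreau--Rockafellar rather than an explicit subgradient), and your split along $J$, $J^c$ followed by the two-case compatibility/AM--GM absorption is precisely the content of Lemma~\ref{lem:2} with $\gamma=2$, $c_\gamma=3$, which produces the constant $9/4$. That part is correct and needs no change.

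The one step that, as written, does not establish the theorem with the stated constants is the concentration bound. You propose to apply Bernstein separately to the two empirical averages and union-bound over the two samples and over $j\in[p]$. The two deviations then add by the triangle inequality, giving a leading term of order $\sqrt{2}\,B_Y\bigl(\sqrt{2\log(4p/\delta)/n}+\sqrt{2\log(4p/\delta)/(N-n)}\bigr)$, which can be as large as $2\sqrt{2}\,B_Y\sqrt{\log(4p/\delta)/n_\star}$ --- a factor of roughly $\sqrt{2}$ (and a worse logarithm) beyond the $2B_Y\sqrt{\log(2p/\delta)/n_\star}$ that condition \eqref{lambdatrans} with the constant $4$ requires after setting $\lambda/2$ as the threshold. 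The paper's Proposition~\ref{proptrans:3} avoids this loss by writing each coordinate $\zeta_j$ as a \emph{single} sum $\frac1N\sum_{i=1}^N Z_{ij}$ of $N$ independent centered bounded variables (the labeled terms rescaled by $N/n$, the unlabeled by $N/(N-n)$), so that the variances pool into $\sigma_N^2\le 2NB_Y^2/n_\star$ and one Bernstein application per coordinate, union-bounded over $p$ coordinates only, yields exactly the quantile $2B_Y(\log(2p/\delta)/n_\star)^{1/2}+2B_XB_Y\log(2p/\delta)/(3n_\star)$. Replacing your two-sample union bound by this single-sum Bernstein closes the gap; everything else stands.
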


A few comments are in order. First, \Cref{thtransductive} holds for any pair of integers $n$ and $N$ larger than $1$.
However, it is especially relevant when the number $N-n$ of unlabeled features is larger than the number $n$ of labeled ones.
As already mentioned, this kind of situation is frequent in applications where the labeling procedure is expensive.
In this case, $n_{\star}=n$ and \Cref{thtransductive} takes the same form as \eqref{in:0} with the notable advantage that the
size of the unlabeled sample does not need to be of larger order than the dimension $p$. Let us present a few implications of
this result in the well-specified case.

\paragraph{Well-specified case.}
Recall that the well-specified case refers to the situation where there exists $\bbeta^\star\in\R^p$
such that the Bayes predictor $f^{\star}$ satisfies $f^{\star}(\bx)=\bx^{\top}\bbeta^{\star}$, $P_{X}$-almost surely.
In this case, the excess risk of a predictor $f_{\bbeta}$ can be written as
$
\transRisk(f_{\bbeta})=\Vert\hat\bfSigma^{1/2}_{\rm unlab}(\bbeta-\bbeta^\star)\Vert^2_2.
$
In this form, the technical tractability of the transductive learning problem appears clearly since the matrix
$\bfA=\hat\bfSigma^{1/2}_{\rm unlab}$ used in the definition of the estimator $\hat\bbeta$ coincides with the
one appearing in the excess loss. As we shall see later, this is indeed not the case for semi-supervised learning.
Now, the choice of $\bbeta=\bbeta^{\star}$ and $J=J^{\star}$ in the right hand side of inequality \eqref{thtrans}
yields
$$
\transRisk(f_{\hat\bbeta}) \le  \frac{9\lambda^2 s^{\star}}{4\kappa_{\hat\bfSigma_{\rm unlab}}(J^{\star},3)}.
$$
The choice of $\lambda$ provided by the right hand side of inequality \eqref{lambdatrans}, along with the condition
$n_{\star}\ge B_X^2\log(2p/\delta)$, leads to the bound
$$
\transRisk(f_{\hat\bbeta}) \le \frac{64B^{2}_{Y}}{\kappa_{\hat\bfSigma_{\rm unlab}}(J^{\star},3)}\cdot\frac{s^{\star}\log(p/\delta)}{n_{\star}},
$$
with probability at least $1-\delta$.
Comparing our result with that of \citet{Alquier2012} (cf. \Cref{thAlquier2012} above), we can note that \Cref{thtransductive}
holds without the assumption that the unlabeled features belong to the linear span of the labeled ones. On the other hand,
\citet{Alquier2012} do not require the labeled and the unlabeled features to be drawn from the same distribution.

\section{Risk bounds in semi-supervised setting}\label{sec:4}

We now turn to the more challenging problem of semi-supervised learning. In this subsection,
we first consider the well-specified setting in which the Bayes predictor $f^{\star}$ is linear.
We start with risk bounds that hold with a probability close to one. Such bounds are often termed
\textit{in deviation} as opposed to those holding \textit{in expectation}.
\paragraph{Well-specified case.}
We assume here that
\begin{equation}
\label{well-spec}
f^{\star}(\bx) = \bx^\top \bbeta^{\star},\quad P_X\text{-almost surely}.
\end{equation}
In this context, the excess risk of the linear predictor $f_{\bbeta}$, defined in \eqref{excess}, becomes
$\excessRisk(f_{\bbeta})=\|\bfSigma^{1/2}(\bbeta-\bbeta^\star)\|_2^2$. This setting is more restrictive than
the mis-specified setting considered below, but it has the advantage of allowing us to obtain risk bounds
that are small even if the sample size $N$ is not necessarily larger than the dimension $p$.
The next result assesses the performance of the predictor $f_{\hat\bbeta}$ where
\begin{equation}
    \label{eq:lasso-sem-sup}
    \hat\bbeta\in\underset{\bbeta\in\R^p}{\arg\min}
\left\{\|\hat\bfSigma^{1/2}_{\rm all}\bbeta\|^2_2-\frac2n\bY^\top\Xlab\bbeta +2\lambda \|\bbeta\|_1 \right\},
\end{equation}
corresponding to the choice $\bfA=\hat\bfSigma^{1/2}_{\rm all}$ in \eqref{lasso}.
In the next result, we set $$\kappa^{\rm RE}_\bfA(s,c) = \min_{J\subseteq [p] : |J|\le s} \kappa^{\rm RE}_\bfA(J,c),$$
where the restricted eigenvalue $\kappa^{\rm RE}_\bfA(J,c)$ is defined in \Cref{sec:2}.

\begin{theorem}
\label{thssws}
Let Assumptions \ref{A1}, \ref{A2} and \eqref{well-spec} be fulfilled. Let $\delta\in(0,1)$ be a tolerance level and
let the tuning parameter $\lambda$ satisfy
$$
\lambda\ge 4B_Y\Big(\frac{\log(4p/\delta)}{n}\Big)^{1/2}\bigg[1+\frac{B_X}{2}\Big(\frac{\log(4p/\delta)}{n}\Big)^{1/2}\bigg].
$$
With probability at least $1-\delta$, it holds
\begin{equation}\label{thm:6:1}
\excessRisk(f_{\hat\bbeta})\le \bigg(\frac{6\lambda s^{\star}}{\bar\kappa_{\hat\bfSigma_N}(J^{\star},3)}\bigg)^2\bigwedge
\frac{9\|\bfSigma\|\lambda^2s^\star}{\kappa^{\rm RE}_{\hat\bfSigma_N}(s^\star,3)^2}.
\end{equation}
In addition, if the overall sample size $N$ is such that $16s^{\star}B_X^2\sqrt{2\log(4p^2/\delta)}\le \bar\kappa_{\bfSigma}(J^{\star},3)\sqrt{N}$ then, with probability at least
$1-\delta$, the predictor $f_{\hat\bbeta}$ satisfies the inequality
\begin{align}
\excessRisk(f_{\hat\bbeta})\le \frac{9\lambda^2 s^{\star}}{\bar\kappa_{\bfSigma}(J^{\star},3)}.\label{thm:6:2}
\end{align}
\end{theorem}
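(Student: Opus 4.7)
The bound \eqref{thm:6:2} is the standard fast-rate lasso inequality, but stated in terms of the \emph{population} compatibility constant $\bar\kappa_\bfSigma(J^\star, 3)$ rather than its empirical counterpart $\bar\kappa_{\hat\bfSigma_N}(J^\star, 3)$ already appearing in \eqref{thm:6:1}. My plan is to go back to the lasso basic inequality for $\hat\bbeta$, show that the error vector $\bv = \hat\bbeta - \bbeta^\star$ lies in the cone of the population compatibility constant, and then use concentration of $\hat\bfSigma_{\rm all}$ around $\bfSigma$ to compare the empirical and population quadratic forms along $\bv$. Under the stated sample-size hypothesis on $N$, this comparison should be strong enough to replace $\bar\kappa_{\hat\bfSigma_{\rm all}}(J^\star, 3)$ by $\bar\kappa_\bfSigma(J^\star, 3)$ without worsening the numerator of the bound.

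First, I would recall (or borrow from the proof of the first half of the theorem) the lasso basic inequality: on the high-probability event $\mathcal{E}_\lambda := \{\lambda \ge 2\|\hat\bfSigma_{\rm all}\bbeta^\star - \tfrac{1}{n}\Xlab^\top\bY\|_\infty\}$, the standard optimality argument applied to \eqref{eq:lasso-sem-sup} yields
\begin{equation*}
\|\hat\bfSigma_{\rm all}^{1/2}\bv\|_2^2 + \lambda\|\bv_{(J^\star)^c}\|_1 \le 3\lambda\|\bv_{J^\star}\|_1,
\end{equation*}
so that $\bv$ lies in the cone appearing in $\bar\kappa_\bfSigma(J^\star, 3)$ and satisfies $\|\bv\|_1 \le 4\|\bv_{J^\star}\|_1$. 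Bernstein's inequality, applied coordinatewise to $\hat\bfSigma_{\rm all}\bbeta^\star - \tfrac{1}{n}\Xlab^\top\bY$ together with a union bound over its $p$ coordinates, shows that $\mathcal{E}_\lambda$ holds with probability at least $1-\delta/2$ for the prescribed $\lambda$. Next, a Hoeffding bound on each of the $p^2$ entries of $\hat\bfSigma_{\rm all} - \bfSigma$ (Assumption \ref{A2} providing entrywise boundedness by $B_X^2$) followed by a union bound yields, with probability at least $1-\delta/2$, the event
\begin{equation*}
\|\hat\bfSigma_{\rm all} - \bfSigma\|_{\infty,\infty} \le B_X^2\sqrt{\tfrac{2\log(4p^2/\delta)}{N}}.
\end{equation*}
Combining this with $\|\bv\|_1 \le 4\|\bv_{J^\star}\|_1$ and with the definition $\|\bv_{J^\star}\|_1^2 \le s^\star\|\bfSigma^{1/2}\bv\|_2^2/\bar\kappa_\bfSigma(J^\star, 3)$, together with the sample-size hypothesis $16 s^\star B_X^2 \sqrt{2\log(4p^2/\delta)} \le \bar\kappa_\bfSigma(J^\star, 3)\sqrt{N}$, produces a comparison of the form $\|\hat\bfSigma_{\rm all}^{1/2}\bv\|_2^2 \ge (1-\rho)\|\bfSigma^{1/2}\bv\|_2^2$ with $\rho < 1$. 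Plugging this into the basic inequality and applying the population compatibility constant once more gives $\|\bfSigma^{1/2}\bv\|_2 \le (1-\rho)^{-1}\cdot 3\lambda\sqrt{s^\star/\bar\kappa_\bfSigma(J^\star, 3)}$, and squaring yields \eqref{thm:6:2} after a final union bound over $\mathcal{E}_\lambda$ and the concentration event for $\hat\bfSigma_{\rm all}$.

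The main obstacle is the tight bookkeeping of constants. The sample-size hypothesis on $N$ is borderline: the naive cone comparison yields a $\rho$ dangerously close to $1$, which would blow up the $(1-\rho)^{-2}$ factor and spoil the claimed constant $9$ in the numerator. Keeping $(1-\rho)^{-2}$ absorbed into the target rate therefore requires a careful quantitative control of Hoeffding (or Bernstein) on $\hat\bfSigma_{\rm all} - \bfSigma$ together with the sharp cone inequality $\|\bv\|_1 \le 4\|\bv_{J^\star}\|_1$. The rest is routine algebra, and the overall probability of success is at least $1-\delta$.
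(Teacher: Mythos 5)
Your strategy for \eqref{thm:6:2} is the same as the paper's: derive the basic inequality for $\hat\bbeta$ on the event that the noise vector $\frac1n\Xlab^\top\bY-\hat\bfSigma_{\rm all}\bbeta^\star$ is small in sup-norm, extract the cone condition $\|\bv_{(J^\star)^c}\|_1\le 3\|\bv_{J^\star}\|_1$ and hence $\|\bv\|_1\le4\|\bv_{J^\star}\|_1$, and pass from $\hat\bfSigma_{\rm all}$ to $\bfSigma$ via $|\bv^\top(\hat\bfSigma_{\rm all}-\bfSigma)\bv|\le\|\hat\bfSigma_{\rm all}-\bfSigma\|_\infty\|\bv\|_1^2\le 16\,\|\hat\bfSigma_{\rm all}-\bfSigma\|_\infty\|\bv_{J^\star}\|_1^2$ together with an entrywise Hoeffding bound and a union bound over the $p^2$ entries. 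The paper does exactly this, except that it keeps the linear term $3\lambda\|\bu_{J^\star}\|_1$ and the quadratic correction separate and divides through, which is algebraically identical to your multiplicative comparison $\|\hat\bfSigma_{\rm all}^{1/2}\bv\|_2^2\ge(1-\rho)\|\bfSigma^{1/2}\bv\|_2^2$ with $\rho=16s^\star\|\hat\bfSigma_{\rm all}-\bfSigma\|_\infty/\bar\kappa_\bfSigma(J^\star,3)$.

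However, the two quantitative points you flag as ``the main obstacle'' are not resolved by the numbers you wrote down, and with those numbers the step fails. First, the basic inequality must carry a factor $2$ on the empirical quadratic form, $2\|\hat\bfSigma_{\rm all}^{1/2}\bv\|_2^2+\lambda\|\bv_{(J^\star)^c}\|_1\le 3\lambda\|\bv_{J^\star}\|_1$; the extra factor comes from the exact second-order expansion of the quadratic objective at its minimizer (the paper's Lemma 1, which adds $\|\bfA(\hat\bbeta-\bbeta)\|_2^2$ to the naive comparison $\Phi(\hat\bbeta)\le\Phi(\bbeta^\star)$). With your coefficient $1$, even the ideal $\rho=1/2$ yields $36\lambda^2 s^\star/\bar\kappa_\bfSigma(J^\star,3)$ instead of the claimed $9\lambda^2 s^\star/\bar\kappa_\bfSigma(J^\star,3)$. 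Second, your concentration threshold $B_X^2\sqrt{2\log(4p^2/\delta)/N}$ for $\|\hat\bfSigma_{\rm all}-\bfSigma\|_\infty$ is twice the one that is compatible with the hypothesis $16s^\star B_X^2\sqrt{2\log(4p^2/\delta)}\le\bar\kappa_\bfSigma(J^\star,3)\sqrt N$: with it you only get $\rho\le1$, so $(1-\rho)^{-1}$ is uncontrolled and the argument collapses entirely. The paper's bound $\prob\big(\|\hat\bfSigma_{\rm all}-\bfSigma\|_\infty\ge t\big)\le 2p^2\exp(-2Nt^2/B_X^4)$ gives the threshold $B_X^2\sqrt{\log(4p^2/\delta)/(2N)}$, for which the sample-size hypothesis yields exactly $\rho\le1/2$; combined with the factor $2$ above, this gives $2(1-\rho)\|\bfSigma^{1/2}\bv\|_2\le 3\lambda\sqrt{s^\star/\bar\kappa_\bfSigma(J^\star,3)}$ and hence the constant $9$ after squaring. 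So the route is right, but both the factor $2$ from the strong-convexity term and the sharper Hoeffding constant are indispensable, not optional bookkeeping.
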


This theorem provides three different risk bounds, all of them being valid for the same choice of the tuning parameter $\lambda$,
that clearly show the benefits of using unlabeled data. The first two bounds are stated in \cref{thm:6:1}. They share the common feature of
depending on a characteristic (compatibility constant or restricted eigenvalue) of the sample covariance matrix. The latter
is computed using both labeled and unlabeled data. For large values of $N$, it is more likely that these characteristics are
bounded away from zero than those of the sample covariance matrix based on the labeled data only. In the asymptotic setting
where $s^\star$ goes to infinity with the sample size and the dimension, the second term in the right hand side of \cref{thm:6:1} is
of smaller order than the first one and is rate optimal, provided that the restricted eigenvalue is lower bounded by a fixed positive
constant. However, for finite and small values of $s^\star$ the first term in the right hand side of \cref{thm:6:1} might be
smaller than the second term.

This being said, it might be more insightful to look at the non random upper bounds on the excess risk as the one stated in
\cref{thm:6:2}.  It basically tells us that if the overall sample size is larger than a multiple of $(s^{\star})^2\log p$, then the
off-sample prediction risk of the semi-supervised lasso estimator achieves the fast rate $\frac{s^{\star}\log p}{n}$.
Note that if we use only the labeled data points, the best known results---as recalled in \Cref{sec:2} above---provide the fast
rate when $n$ is larger than a multiple of $s^{\star}\log p$. Thus, if $N$ is of the same order as $n$, our result above is not the
sharpest possible, but it has the advantage of being easy to prove and, nevertheless, demonstrating the gain of using the
unlabeled data. In particular, the proof of results providing the fast rate under the condition $n\ge Cs^{\star}\log(p)$,
for some $C>0$, involve the important step of lower bounding the compatibility constant of the sample covariance
matrix by its population counterpart. This step uses concentration arguments which are often tedious and come with
implicit (or unreasonably large) constants. Instead, our proof makes use of much simpler tools essentially boiling
down to the classical Bernstein inequality and leads to explicit and small constants.

\paragraph{Mis-specified case.}
Mathematical analysis of the semi-supervised lasso under mis-specification is more involved, since it requires careful
control of the bias terms corresponding to the nonlinearity and the non-sparsity of the model. We first state results
providing risk bounds in deviation, then state their counterpart in expectation.

\begin{theorem}
\label{thssms}
    Let Assumptions \ref{A1} and \ref{A2} be fulfilled. Fix $J\subseteq [p]$ and $\delta\in(0,1)$. Suppose in addition that
    \begin{equation}
        \label{condition-N-p}
        N\ge  18 B^2_{X}p\Vert \bfSigma^{-1} \Vert\log({3p}/{\delta})
    \end{equation}
    and
    \begin{equation}
        \label{condition-ssms-lambda}
        \lambda \ge 8B_{X}B_Y\Big(\frac{\log(6p/\delta)}{n}\Big)^{1/2}\bigg[1+\frac{B_X}{3}\Big(\frac{\log(6p/\delta)}{n}\Big)^{1/2}\bigg].
    \end{equation}
	Then the semi-supervised lasso estimator $\hat\bbeta$ defined in \eqref{eq:lasso-sem-sup} above
    satisfies
    \begin{align}
    \excessRisk (f_{\hat\bbeta}) \le
    \inf_{\bbeta\in\R^p}
    \bigg\{
        \excessRisk (f_{\bbeta})
        + 4\lambda\|\bbeta_{J^c}\|_1
        + \frac{9\lambda^2|J|}{2\kappa_{\hat\bfSigma_{\rm all}}(J,3)}
    \bigg\},
    \label{eq:soi-semisup}
    \end{align}
    with probability larger than $1-\delta$.
\end{theorem}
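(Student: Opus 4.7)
The plan is to adapt the standard lasso oracle-inequality template (basic inequality, decomposition into noise, cone condition, compatibility constant) to the semi-supervised setting. The main complication here is that the Hessian $\hat\bfSigma_{\rm all}$ of the objective in \eqref{eq:lasso-sem-sup} differs from the population covariance $\bfSigma$ appearing inside $\excessRisk(\cdot)$; condition \eqref{condition-N-p} is imposed precisely to ensure, via matrix concentration, that this mismatch is harmless.

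Starting from the optimality of $\hat\bbeta$ in the penalized problem, I would set $\bu=\hat\bbeta-\bbeta$, $\hat\bmu_{\rm lab}=\tfrac{1}{n}\bfX_{\rm lab}^\top\bY$, $\bmu=\esp[Y\bX]$, $\bs V=\hat\bmu_{\rm lab}-\bmu$, $\bfDelta=\hat\bfSigma_{\rm all}-\bfSigma$, and use the identity $\excessRisk(f_{\hat\bbeta})-\excessRisk(f_{\bbeta})=\bu^\top\bfSigma\bu+2(\bfSigma\bbeta-\bmu)^\top\bu$ to rewrite the basic inequality as
\[
\excessRisk(f_{\hat\bbeta}) + \bu^\top\bfDelta\bu + 2\lambda\|\hat\bbeta\|_1 \;\le\; \excessRisk(f_{\bbeta}) + 2\lambda\|\bbeta\|_1 + 2(\bs V - \bfDelta\bbeta)^\top\bu.
\]
I would then work on a good event $\mathcal{E}$ of probability at least $1-\delta$ constructed by splitting $\delta$ into three pieces. (a) Coordinatewise Bernstein for $Y_iX_i^j$ (bounded by $B_XB_Y$, variance $\le B_Y^2$ under A1--A2) plus a union bound over $j\in[p]$ gives $\|\bs V\|_\infty\le \lambda/4$, which is exactly what \eqref{condition-ssms-lambda} is calibrated for. (b) Matrix Bernstein applied to $\bfSigma^{-1/2}\bX_i\bX_i^\top\bfSigma^{-1/2}-\bfI_p$---each summand has operator norm at most $pB_X^2\|\bfSigma^{-1}\|$ and matrix variance of the same order---combined with \eqref{condition-N-p} yields $\|\bfSigma^{-1/2}\bfDelta\bfSigma^{-1/2}\|_{\rm op}\le 1/3$, whence $\tfrac23\bs v^\top\bfSigma\bs v\le \bs v^\top\hat\bfSigma_{\rm all}\bs v\le \tfrac43\bs v^\top\bfSigma\bs v$ and $|\bs v^\top\bfDelta\bs v|\le \tfrac13\bs v^\top\bfSigma\bs v$ for every $\bs v$. (c) For the cross term, coordinatewise Bernstein applied to $X_i^j(\bX_i^\top\bbeta)$ combined with $\bbeta^\top\bfSigma\bbeta\le 2\excessRisk(f_{\bbeta})+2B_Y^2$ (which uses $|f^\star|\le B_Y$) bounds $|(\bfDelta\bbeta)^\top\bu|$ by $\lambda\|\bu\|_1/4$ up to terms that can be absorbed into $\excessRisk(f_{\bbeta})$.

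On $\mathcal{E}$, using $\bu^\top\bfDelta\bu\ge -\tfrac12\bu^\top\hat\bfSigma_{\rm all}\bu$ from (b), the rearranged inequality reduces (schematically) to
\[
\excessRisk(f_{\hat\bbeta}) + \tfrac12\bu^\top\hat\bfSigma_{\rm all}\bu + 2\lambda\|\hat\bbeta\|_1 \;\le\; \excessRisk(f_{\bbeta}) + 2\lambda\|\bbeta\|_1 + \lambda\|\bu\|_1.
\]
The standard decomposition $\|\hat\bbeta\|_1-\|\bbeta\|_1\ge \|\bu_{J^c}\|_1-\|\bu_J\|_1-2\|\bbeta_{J^c}\|_1$ places $\bu$ on the dimension-reduction cone $\{\|\bu_{J^c}\|_1\le 3\|\bu_J\|_1\}$ modulo an additive bias $4\lambda\|\bbeta_{J^c}\|_1$; the compatibility condition \eqref{Cc} then gives $\lambda\|\bu_J\|_1\le (|J|\lambda^2\bu^\top\hat\bfSigma_{\rm all}\bu/\kappa_{\hat\bfSigma_{\rm all}}(J,3))^{1/2}$, and a Young-type inequality absorbs the residual quadratic form $\tfrac12\bu^\top\hat\bfSigma_{\rm all}\bu$ back into the LHS, producing the claimed oracle inequality with the constants $4$ and $9/2$.

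The main difficulty is the cross term $(\bfDelta\bbeta)^\top\bu$: a naive entrywise Bernstein bound on $\|\bfDelta\bbeta\|_\infty$ unavoidably carries a $\|\bbeta\|_1$ factor, which would ruin the oracle inequality because the oracle $\bbeta$ need not be sparse. This is exactly why the strong sample-size condition \eqref{condition-N-p} (linear in $p$) is imposed: the matrix-Bernstein step (b) supplies enough operator-norm control on $\bfDelta$ to bound the cross term through the $L_2(P_X)$-geometry (via $\bbeta^\top\bfSigma\bbeta$) rather than $\ell_1$-geometry, and simultaneously ensures that the quadratic error $\bu^\top\bfDelta\bu$ does not disrupt the compatibility argument---a price we do not have to pay in the transductive result \Cref{thtransductive}, where the Hessian and the risk are expressed through the same matrix $\hat\bfSigma_{\rm unlab}$.
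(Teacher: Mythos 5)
Your overall architecture coincides with the paper's own proof: the same basic inequality obtained from the optimality of $\hat\bbeta$ (your $\bs V$ is the paper's $\bzeta^{(1)}$ and your $-\bfDelta\bbeta$ is its $\bzeta^{(2)}$), the same three-event splitting with your operator-norm control of $\bfSigma^{-1/2}\bfDelta\bfSigma^{-1/2}$ playing the role of the event $\lambda_{\min}(\bfSigma^{-1/2}\hat\bfSigma_{\rm all}\bfSigma^{-1/2})\ge 2/3$ obtained there by a matrix Chernoff bound, and the same cone/compatibility endgame producing the constants $4$ and $9/2$.

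The genuine gap is in your step (c), exactly at the point you yourself flag as the main difficulty. Bounding $\|\bfDelta\bbeta\|_\infty$ by coordinatewise Bernstein requires a bound on $\|\bfSigma^{1/2}\bbeta\|_2$, both for the sup-norm of the summands and for their variance. Your substitute $\bbeta^\top\bfSigma\bbeta\le 2\excessRisk(f_{\bbeta})+2B_Y^2$ makes the resulting bound on $2|(\bfDelta\bbeta)^\top\bu|$ contain a term proportional to $\sqrt{\excessRisk(f_{\bbeta})}\,\|\bu\|_1$. Absorbing it ``into $\excessRisk(f_{\bbeta})$'' via a Young-type inequality necessarily puts a factor $1+\epsilon$ in front of $\excessRisk(f_{\bbeta})$, i.e.\ yields a non-sharp oracle inequality strictly weaker than \eqref{eq:soi-semisup}, and additionally leaves a residual of order $\epsilon^{-1}\|\bu\|_1^2\log(p/\delta)B_X^2/N$ whose absorption into $-\frac12\|\hat\bfSigma_{\rm all}^{1/2}\bu\|_2^2$ is itself not free (it costs either a further degradation of the $9/2$ constant or an extra condition on $N$). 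The paper closes the argument precisely where yours stalls: since \eqref{eq:soi-semisup} is an infimum over $\bbeta$, it suffices to prove it for $\bbeta$ a minimizer of its right-hand side, and \Cref{lem:strong-convexity-minimizer} shows by a variational argument that any minimizer of $\esp[(\bbeta^\top\bX-Y)^2]+\pen(\bbeta)$, for a nonnegative convex penalty vanishing at the origin, satisfies $\esp[(\bbeta^\top\bX)^2]\le\esp[Y^2]\le B_Y^2$. This gives $\|\bfSigma^{1/2}\bbeta\|_2\le B_Y$ with no $\excessRisk(f_{\bbeta})$ contamination, which is what allows \Cref{lem:4} to deliver $\|\bzeta^{(2)}\|_\infty\le\lambda/4$ under \eqref{condition-N-p} and \eqref{condition-ssms-lambda}, and hence the sharp leading constant $1$. (A minor further slip: your first rearranged display drops the strong-convexity term $-\|\hat\bfSigma_{\rm all}^{1/2}\bu\|_2^2$ coming from the quadratic part of the objective; it must be retained, since it is exactly what becomes the $-\frac12\bu^\top\hat\bfSigma_{\rm all}\bu$ in your subsequent display after combining with $\bu^\top\bfDelta\bu\ge-\frac13\bu^\top\bfSigma\bu$.)
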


The novelty of \Cref{thssms} lies in the semi-supervised nature of the estimator \eqref{eq:lasso-sem-sup}, which involves all
the unlabeled features through the matrix $\bfA=\hat\bfSigma^{1/2}_{\rm all}$ in \cref{lasso}. In particular, \Cref{thssms}
quantifies the natural intuition according to which, if $N$ is large enough, the matrix $\bfA=\hat\bfSigma^{1/2}_{\rm all}$
is a good estimator of $\bf\Sigma$ and a result similar to \Cref{thKLT} should hold. As mentioned in the introduction, an
attractive feature of the upper bound in \cref{eq:soi-semisup} is that it is of the same form as the recent oracle inequalities
established in the case of fixed design regression \citep[see, for instance,][and the references therein]{DHL14,Pensky14} and quantify in an
easy-to-understand manner the error terms accounting for the non-linearity and the non-sparsity of the true regression function $f^\star$.

The minimal number $N$ of features satisfying \eqref{condition-N-p} depends on $\Vert \bfSigma^{-1} \Vert=\lambda^{-1}_{\rm min}(\bf\Sigma)$,
reflecting the fact that the quality of approximation of the identity matrix $\bfI_p$ by
$\bfSigma^{-1/2}\hat\bfSigma_{\rm all}\bfSigma^{-1/2}$ depends on $\Vert \bfSigma^{-1} \Vert$. One can remark that under constraint \eqref{condition-N-p},
the lowest eigenvalue of the sample covariance matrix is close to its population counterpart \citep{Vershynin11} and provides a
simple lower bound on the compatibility constant  $\kappa_{\hat\bfSigma_{\rm all}}(J,3)$ appearing in \cref{eq:soi-semisup}. These
considerations lead to the following corollary.

\begin{cor}\label{cor:1}
    Under the conditions of \Cref{thssms}, with probability at least $1-\delta$,
    \begin{align}
    \excessRisk (f_{\hat\bbeta}) \le
    \inf_{J\subseteq [p]}
    \inf_{\bbeta\in\R^p}
    \Big\{
        \excessRisk (f_{\bbeta})
        + 4\lambda\|\bbeta_{J^c}\|_1
        + {\textstyle\frac{27\| \bfSigma^{-1} \|}{4}}\, \lambda^2 |J|
    \Big\}.
    \end{align}
\end{cor}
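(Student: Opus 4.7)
The plan is to derive this corollary from Theorem~\ref{thssms} by replacing the random quantity $\kappa_{\hat\bfSigma_{\rm all}}(J,3)$ with a deterministic lower bound expressed in terms of $\|\bfSigma^{-1}\|$, valid with high probability under condition \eqref{condition-N-p}. Two ingredients are needed: a universal lower bound on the compatibility constant in terms of the smallest eigenvalue of the underlying matrix, and a matrix-concentration argument that transfers control from $\bfSigma$ to $\hat\bfSigma_{\rm all}$.

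The first step is purely algebraic. For any positive semidefinite matrix $\bfA\in\R^{p\times p}$, any $J\subseteq[p]$ and any $c>0$, one has
\[
\kappa_\bfA(J,c)\;\ge\;\lambda_{\min}(\bfA).
\]
Indeed, for $\bv$ in the cone $\{\|\bv_{J^c}\|_1<c\|\bv_J\|_1\}$, the Cauchy--Schwarz inequality gives $(c\|\bv_J\|_1-\|\bv_{J^c}\|_1)^2\le c^2\|\bv_J\|_1^2\le c^2|J|\,\|\bv_J\|_2^2\le c^2|J|\,\|\bv\|_2^2$, while $\|\bfA^{1/2}\bv\|_2^2\ge\lambda_{\min}(\bfA)\|\bv\|_2^2$; inserting these bounds into the definition \eqref{Cc} yields the claim. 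In particular, $\kappa_{\hat\bfSigma_{\rm all}}(J,3)\ge\lambda_{\min}(\hat\bfSigma_{\rm all})$.

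The second step is to show that, under \eqref{condition-N-p}, one has $\lambda_{\min}(\hat\bfSigma_{\rm all})\ge\tfrac{2}{3}\lambda_{\min}(\bfSigma)=\tfrac{2}{3\|\bfSigma^{-1}\|}$ with sufficiently high probability. Since $\|\bX_i\|_2\le B_X\sqrt{p}$ by Assumption~\ref{A2}, the centered summands $\frac{1}{N}(\bfSigma^{-1/2}\bX_i\bX_i^\top\bfSigma^{-1/2}-\bfI_p)$ have spectral norm at most of order $B_X^2 p\|\bfSigma^{-1}\|/N$, and the same bound controls the matrix variance. Applying the matrix Bernstein inequality, along the lines of \citet{Vershynin11}, yields
\[
\prob\!\left(\big\|\bfSigma^{-1/2}\hat\bfSigma_{\rm all}\bfSigma^{-1/2}-\bfI_p\big\|\ge 1/3\right)\;\le\;\delta
\]
precisely when $N$ exceeds a constant multiple of $B_X^2 p\|\bfSigma^{-1}\|\log(p/\delta)$, which is ensured by \eqref{condition-N-p}. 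On this event the eigenvalues of $\bfSigma^{-1/2}\hat\bfSigma_{\rm all}\bfSigma^{-1/2}$ lie in $[2/3,4/3]$, so $\lambda_{\min}(\hat\bfSigma_{\rm all})\ge\tfrac{2}{3}\lambda_{\min}(\bfSigma)$.

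Combining the two steps on the intersection of this event with the event from Theorem~\ref{thssms} gives
\[
\frac{9\lambda^2|J|}{2\kappa_{\hat\bfSigma_{\rm all}}(J,3)}\;\le\;\frac{9\lambda^2|J|}{2\cdot\tfrac{2}{3\|\bfSigma^{-1}\|}}\;=\;\frac{27\|\bfSigma^{-1}\|}{4}\,\lambda^2|J|,
\]
and substituting into \eqref{eq:soi-semisup}, followed by taking the infimum over $J\subseteq[p]$, produces the stated inequality (with the $\delta$-budget in \eqref{condition-N-p} absorbing the matrix-Bernstein failure probability). The only real obstacle is the concentration step, but it is a standard application of matrix Bernstein and the constants in \eqref{condition-N-p} have been calibrated to make it go through directly.
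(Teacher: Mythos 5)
Your two-step plan---bounding $\kappa_{\hat\bfSigma_{\rm all}}(J,3)$ from below by $\lambda_{\min}(\hat\bfSigma_{\rm all})$ via Cauchy--Schwarz on the cone, and then transferring to $\lambda_{\min}(\bfSigma)=1/\|\bfSigma^{-1}\|$ by concentration---is exactly the route the paper intends, and your algebra ($\tfrac92\cdot\tfrac{3\|\bfSigma^{-1}\|}{2}=\tfrac{27\|\bfSigma^{-1}\|}{4}$) is correct. The one real flaw is the probability bookkeeping: you intersect a freshly constructed matrix-Bernstein event of probability $1-\delta$ with the event of \Cref{thssms}, which only yields probability $1-2\delta$, and you assert without verification that the constant $18$ in \eqref{condition-N-p} is calibrated for your two-sided matrix-Bernstein bound (it is not; it is calibrated for a one-sided matrix Chernoff bound). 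Both issues disappear once you notice that the required eigenvalue control is already contained in the proof of \Cref{thssms}: its event $\mathscr E_3=\{\lambda_{\min}(\bfSigma^{-1/2}\hat\bfSigma_{\rm all}\bfSigma^{-1/2})\ge 2/3\}$, shown in \Cref{lem:lambda-min} (via Tropp's matrix Chernoff inequality) to have probability at least $1-\delta/3$ under \eqref{condition-N-p}, gives $\hat\bfSigma_{\rm all}\succeq\frac23\bfSigma$ and hence $\kappa_{\hat\bfSigma_{\rm all}}(J,3)\ge\lambda_{\min}(\hat\bfSigma_{\rm all})\ge\frac{2}{3\|\bfSigma^{-1}\|}$ on the very same event on which \eqref{eq:soi-semisup} holds, so no additional failure probability is spent. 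Finally, the outer infimum over $J$ requires no union bound: after the replacement the right-hand side is deterministic in $(J,\bbeta)$, so one applies \Cref{thssms} once, at the deterministic minimizing pair.
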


Let us also mention that the factor $B_X^2 p\|\bfSigma^{-1}\|$ present in the right hand side of \cref{condition-N-p}
is an upper bound on the norm $\|\bfSigma^{-1/2}\bX_i\|_2^2$ under assumption \ref{A2}. Under additional assumptions on the
support of the features $\bX_i$, this expression may be replaced by a smaller one leading thus to a relaxation of
condition~\eqref{condition-N-p}.

\paragraph{Sharp oracle inequality in expectation.}

All the previously stated results assert that the lasso estimator has a small prediction error
on an event of overwhelming probability. However, in these results, the choice of the tuning
parameter $\lambda$ and, therefore, the final predictor $f_{\hat\bbeta}$, depends on the prescribed
level of tolerance. A consequence of this dependence is that one can not integrate out the
bounds in deviation in order to get a bound in expectation. This is probably one of the reasons
why the bounds in expectation for the lasso are scarce in the literature. To fill this caveat,
we state below a risk bound in expectation that can be easily deduced from the bounds in deviation.

\begin{theorem}
\label{thssms1}
    Let Assumptions \ref{A1} and \ref{A2} be fulfilled. Suppose that
    the overall sample size is such that $N\ge  18 B^2_{X}p\Vert \bfSigma^{-1} \Vert\log(3pN^2)$. Then, for the tuning
		parameter
    \begin{equation}
        \label{condition-ssms-lambda1}
        \lambda = 8B_{X}B_Y\Big(\frac{\log(6pN^2)}{n}\Big)^{1/2}\bigg[1+\frac{B_X}{3}\Big(\frac{\log(6pN^2)}{n}\Big)^{1/2}\bigg]
    \end{equation}
	the semi-supervised lasso estimator $\hat\bbeta$ defined in \eqref{eq:lasso-sem-sup} above
    satisfies
    \begin{align}
    \esp[\excessRisk (f_{\hat\bbeta})] \le
    \inf_{J\subseteq[p]}\inf_{\bbeta\in\R^p}
    \Big\{
        \excessRisk (f_{\bbeta})
        + 4\lambda\|\bbeta_{J^c}\|_1
        + \frac{27\|\bfSigma^{-1}\|}{4} \lambda^2|J|
    \Big\} + \frac{2B_Y^2}{N^2}+\frac{B_Y^2}{2^7n\log^2(6pN^2)}.
    \label{eq:soi-semisup1}
    \end{align}
\end{theorem}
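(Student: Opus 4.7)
My plan is to convert Corollary~\ref{cor:1} into an expectation bound by calibrating $\delta=1/N^2$. With this choice, both conditions \eqref{condition-N-p} and \eqref{condition-ssms-lambda} of Corollary~\ref{cor:1} reduce exactly to the hypotheses of Theorem~\ref{thssms1}, so there exists an event $\mathcal G$ of probability at least $1-1/N^2$ on which
\begin{equation*}
\excessRisk(f_{\hat\bbeta})\le \mathcal U:=\inf_{J\subseteq[p]}\inf_{\bbeta\in\R^p}\Big\{\excessRisk(f_\bbeta)+4\lambda\|\bbeta_{J^c}\|_1+\tfrac{27}{4}\|\bfSigma^{-1}\|\lambda^2|J|\Big\}.
\end{equation*}
Splitting over $\mathcal G$ and its complement gives $\esp[\excessRisk(f_{\hat\bbeta})]\le \mathcal U + \esp\bigl[\excessRisk(f_{\hat\bbeta})\mathbf{1}_{\mathcal G^c}\bigr]$, so the only thing left is to control the residual expectation by the two terms $\tfrac{2B_Y^2}{N^2}+\tfrac{B_Y^2}{2^7 n\log^2(6pN^2)}$.

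The first step in bounding the residual is a deterministic control of the lasso via its first-order optimality. Comparing the objective in \eqref{eq:lasso-sem-sup} at $\hat\bbeta$ with its value at $\bbeta=0$ yields
\begin{equation*}
\|\hat\bfSigma_{\rm all}^{1/2}\hat\bbeta\|_2^2+2\lambda\|\hat\bbeta\|_1\le \tfrac{2}{n}\bY^\top\bfX_{\rm lab}\hat\bbeta,
\end{equation*}
from which Cauchy--Schwarz, the PSD relation $n\hat\bfSigma_{\rm lab}\preceq N\hat\bfSigma_{\rm all}$, and $|Y_i|\le B_Y$ give the almost-sure estimate $\|\hat\bfSigma_{\rm all}^{1/2}\hat\bbeta\|_2\le 2B_Y\sqrt{N/n}$. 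Combined with the decomposition $\excessRisk(f_{\hat\bbeta})\le 2\|\hat\bbeta\|_{\bfSigma}^2+2B_Y^2$ and with the spectral comparison $\bfSigma\preceq 2\hat\bfSigma_{\rm all}$ (available on a sub-event of $\mathcal G$ by the same matrix concentration argument that underlies Corollary~\ref{cor:1}), this furnishes a control of $\excessRisk(f_{\hat\bbeta})$ on $\mathcal G^c$ that is strong enough to absorb the $1/N^2$ factor.

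The remaining step is to integrate this estimate by means of the layer-cake formula $\esp[\excessRisk(f_{\hat\bbeta})\mathbf{1}_{\mathcal G^c}] = \int_0^\infty \prob(\excessRisk(f_{\hat\bbeta})>t,\mathcal G^c)\,dt$. Splitting the integral at $t=2B_Y^2$ naturally produces two pieces: the range $0\le t\le 2B_Y^2$ is handled using only $\prob(\mathcal G^c)\le 1/N^2$ and contributes at most $2B_Y^2/N^2$, whereas the tail $t>2B_Y^2$ uses the deterministic bound obtained above together with the explicit value of $\lambda$ in \eqref{condition-ssms-lambda1} to contribute the correction $\tfrac{B_Y^2}{2^7 n\log^2(6pN^2)}$. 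The main obstacle will be this last step: since $\hat\bbeta$ is not a priori bounded, one has to combine the optimality inequality with the sample-size hypothesis on $N$ in a careful way to avoid spurious $N/n$ factors in the residual, and chasing numerical constants through this argument is precisely what produces the specific denominators $2$ and $2^7$ in \eqref{eq:soi-semisup1}.
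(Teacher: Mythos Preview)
Your overall plan (choose $\delta=1/N^2$ in Corollary~\ref{cor:1}, split the expectation over $\mathcal G$ and $\mathcal G^c$) matches the paper's, but there is a genuine gap in how you control the residual $\esp[\excessRisk(f_{\hat\bbeta})\mathbf 1_{\mathcal G^c}]$. You write that the spectral comparison $\bfSigma\preceq 2\hat\bfSigma_{\rm all}$ is ``available on a sub-event of $\mathcal G$'' and then claim it ``furnishes a control of $\excessRisk(f_{\hat\bbeta})$ on $\mathcal G^c$''. These two statements are incompatible: the inequality $\bfSigma\preceq 2\hat\bfSigma_{\rm all}$ is precisely the content of the event $\mathscr E_3$, which is part of the \emph{good} event; on $\mathcal G^c$ it can fail completely (e.g., $\hat\bfSigma_{\rm all}$ may have a tiny smallest eigenvalue), so your bound $\|\hat\bfSigma_{\rm all}^{1/2}\hat\bbeta\|_2\le 2B_Y\sqrt{N/n}$ gives you no handle on $\|\bfSigma^{1/2}\hat\bbeta\|_2$ there. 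Without this link, the whole layer-cake argument for $t>2B_Y^2$ collapses.

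The paper avoids this by producing a bound that is \emph{almost sure}, hence valid on $\mathcal G^c$ as well. From the same optimality inequality $\psi(\hat\bbeta)\le\psi(0)$, instead of extracting $\|\hat\bfSigma_{\rm all}^{1/2}\hat\bbeta\|_2$ via Cauchy--Schwarz, one completes the square to get
\[
2\lambda\|\hat\bbeta\|_1\le \frac1{n^2}\big\|\hat\bfSigma_{\rm all}^{-1/2}\bfX_{\rm lab}^\top\bY\big\|_2^2
=\frac1n\big\|\hat\bfSigma_{\rm all}^{-1/2}\hat\bfSigma_{\rm lab}\hat\bfSigma_{\rm all}^{-1/2}\big\|\,\|\bY\|_2^2
\le \frac{N}{n}\,B_Y^2,
\]
where the last step uses $n\hat\bfSigma_{\rm lab}\preceq N\hat\bfSigma_{\rm all}$ exactly as you did. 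This yields the deterministic bound $\|\hat\bbeta\|_1\le \tfrac{NB_Y^2}{2n\lambda}$. Then, under \ref{A1}, $|\bfSigma_{jj'}|\le 1$ gives the deterministic inequality $\hat\bbeta^\top\bfSigma\hat\bbeta\le\|\hat\bbeta\|_1^2$, so
\[
\excessRisk(f_{\hat\bbeta})\le 2B_Y^2+2\|\hat\bbeta\|_1^2\le 2B_Y^2+\frac{N^2B_Y^4}{2n^2\lambda^2}\quad\text{a.s.}
\]
Multiplying by $\prob(\mathcal G^c)\le 1/N^2$ and substituting the value of $\lambda$ gives the two residual terms directly; no layer-cake integration is needed. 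The key idea you are missing is to pass through $\|\hat\bbeta\|_1$ (and the trivial bound $\bbeta^\top\bfSigma\bbeta\le\|\bbeta\|_1^2$) rather than through $\|\hat\bfSigma_{\rm all}^{1/2}\hat\bbeta\|_2$, precisely because the former route never requires comparing $\bfSigma$ with $\hat\bfSigma_{\rm all}$.
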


The proof of this theorem is postponed to \cref{ssec:74}. The bound above is not optimal in terms of its dependence on $N$.
In particular, it blows up when $N$ goes to infinity and all the other parameters are fixed. However, this divergence is
only logarithmic in $N$. The dominating term in the risk bound above is (at least in the well specified setting) of the order
$\lambda^2 |J| \asymp \frac{s\log(pN)}{n}$.

\section{Conclusion}\label{sec:6}

We have reviewed some recent results on the prediction accuracy of the lasso in the problem of regression with random design
and have proposed their extensions to the setting where the labels of some data points are not available. Theoretical
guarantees stated in previous sections are formulated as oracle inequalities that allow us to compare the excess risk of a
suitable adaptation of the lasso to the best possible (nearly) sparse prediction function. We have opted for considering only
those risk bounds that provide the fast rate and are valid under some conditions on the design such as the restricted eigenvalue
condition or the compatibility condition. Some of the established upper bounds involve the compatibility constant of the
sample covariance matrix. Using results on random matrices \citep{rudelson2013reconstruction,oliveira2013lower, bah14} they can
be further worked out to get deterministic upper bounds. However, the evaluation of the restricted eigenvalues and related
quantities of the random covariance-type matrices is a dynamically evolving research area and we expect that new advances
will be made in near future.

The main high level message of the contributions of this paper is that one can take advantage of the unlabeled sample for
improving the prediction accuracy of the lasso. Roughly speaking, if the size of the unlabeled sample is larger than the
ambient dimension, then the modified lasso predictor has a prediction risk that converges to zero at the optimal rate even
if the sample covariance matrix based only on the labeled sample does not satisfy the compatibility or the restricted
eigenvalue condition. However, it should be acknowledged that when the model is well specified (that is there exists a sparse
linear combination of the features with an extremely low approximation error) and the population covariance matrix is
well-conditioned, then the original lasso might perform as well as, or even better than, the modified lasso proposed in this
work. Therefore, one can conclude that the use of the unlabeled sample improves on the robustness of the lasso to the model
mis-specification.

We would like also to emphasize that, pursuing pedagogical goals, we have restricted our attention to the simple case
of bounded feature vectors and bounded labels. All the proofs presented in this paper are based on elementary arguments
and are fairly simple. Using more involved arguments, they can be carried over the case of sub-Gaussian design and labels.
It would be interesting to explore their extensions to other settings such as regression with structured
sparsity, low rank matrix regression or matrix completion, \textit{etc}.

\section{Proofs}\label{sec:7}

We start with a general result that holds for the penalized least squares predictor with arbitrary convex penalty.
This result is of independent interest. It generalizes the corresponding result of \citep{KLT11} established for the
matrix trace-norm penalties. The proof that we present here is different from the one in \citep{KLT11} in that it
does not rely on the precise form of the sub-differential of the penalty function.
\begin{lemma}\label{lem:1}
    Let $n,p\ge 1$.
Let $\pen:\R^p\to\R$ be any convex function and $\hat\bbeta$ be defined by
\begin{equation} \label{pen-ls}
\hat\bbeta\in\underset{\bbeta\in\R^p}{\arg\min}
\left\{\|\bfA\bbeta\|^2_2-\frac2n\bY^\top\Xlab\bbeta +\pen(\bbeta) \right\},
\end{equation}
where $\bfA\in\R^{p\times p}$, $\bY\in\R^n$ and $\Xlab\in\R^{n\times p}$. Then, for all $\bbeta\in\R^p$,
\begin{align}\|\bfA\hat\bbeta\|^2_2\le \|\bfA\bbeta\|^2_2+ \frac2n\bY^\top\Xlab(\hat\bbeta-\bbeta) +\pen(\bbeta)-\pen(\hat\bbeta)-\|\bfA(\hat\bbeta-\bbeta)\|_2^2.
\label{eq:1}
\end{align}
\end{lemma}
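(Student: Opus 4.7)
The plan is to proceed from the first-order optimality condition for the convex minimization problem and then unwind the squared $\bfA$-norm algebraically so that no subgradient remains in the final bound.

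First, write the objective as $F(\bbeta)=G(\bbeta)+\pen(\bbeta)$, where $G(\bbeta)=\|\bfA\bbeta\|^2_2-\frac{2}{n}\bY^\top\Xlab\bbeta$ is convex and differentiable with $\nabla G(\bbeta)=2\bfA^\top\bfA\bbeta-\frac{2}{n}\Xlab^\top\bY$. Since $\hat\bbeta$ is a minimizer of the convex function $F$ and $G$ is smooth, $0\in\partial F(\hat\bbeta)=\nabla G(\hat\bbeta)+\partial\pen(\hat\bbeta)$, so $-\nabla G(\hat\bbeta)\in\partial\pen(\hat\bbeta)$. The subgradient inequality then yields, for every $\bbeta\in\R^p$,
\begin{equation*}
\pen(\bbeta)-\pen(\hat\bbeta)\;\ge\;\langle-\nabla G(\hat\bbeta),\bbeta-\hat\bbeta\rangle
=\langle\nabla G(\hat\bbeta),\hat\bbeta-\bbeta\rangle.
\end{equation*}

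Second, substitute the explicit expression for $\nabla G(\hat\bbeta)$ to get
\begin{equation*}
\langle\nabla G(\hat\bbeta),\hat\bbeta-\bbeta\rangle
=2\langle\bfA\hat\bbeta,\bfA(\hat\bbeta-\bbeta)\rangle-\tfrac{2}{n}\bY^\top\Xlab(\hat\bbeta-\bbeta).
\end{equation*}
Now use the polarisation identity
\begin{equation*}
2\langle\bfA\hat\bbeta,\bfA(\hat\bbeta-\bbeta)\rangle
=\|\bfA\hat\bbeta\|^2_2-\|\bfA\bbeta\|^2_2+\|\bfA(\hat\bbeta-\bbeta)\|^2_2,
\end{equation*}
which is just the expansion of $\|\bfA(\hat\bbeta-\bbeta)\|^2_2=\|\bfA\hat\bbeta\|^2_2-2\langle\bfA\hat\bbeta,\bfA\bbeta\rangle+\|\bfA\bbeta\|^2_2$ rearranged.

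Third, combine the two displays to obtain
\begin{equation*}
\pen(\bbeta)-\pen(\hat\bbeta)\;\ge\;\|\bfA\hat\bbeta\|^2_2-\|\bfA\bbeta\|^2_2+\|\bfA(\hat\bbeta-\bbeta)\|^2_2-\tfrac{2}{n}\bY^\top\Xlab(\hat\bbeta-\bbeta),
\end{equation*}
and rearrange to get exactly \eqref{eq:1}. There is no real obstacle here: the only mildly delicate point is the appeal to the sum rule for subdifferentials, which is immediate because $G$ is everywhere differentiable. Notably, the argument never opens up $\partial\pen(\hat\bbeta)$, which is why the result is not tied to any particular penalty such as the trace norm or the $\ell_1$-norm.
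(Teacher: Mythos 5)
Your proof is correct. It takes a slightly different route from the paper's: you invoke the first-order optimality condition $\bzero_p\in\nabla G(\hat\bbeta)+\partial\pen(\hat\bbeta)$, apply the subgradient inequality to $\pen$ at $\hat\bbeta$, and then recover the extra term $\|\bfA(\hat\bbeta-\bbeta)\|_2^2$ by polarisation. The paper instead subtracts the quadratic $\psi(\bbeta)=\|\bfA(\bbeta-\hat\bbeta)\|_2^2$ from the whole objective $\Phi$, observes that $\nabla\psi(\hat\bbeta)=\bzero_p$ so that by Moreau--Rockafellar $\hat\bbeta$ also minimises $\bar\Phi=\Phi-\psi$, and reads off $\Phi(\bbeta)\ge\Phi(\hat\bbeta)+\psi(\bbeta)$ directly, with no gradient computation and no polarisation step. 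The two arguments are equally elementary and equally general: neither opens up $\partial\pen$ beyond the defining subgradient inequality, so both cover arbitrary convex penalties (which is the point the authors emphasise in contrast with the trace-norm-specific proof of Koltchinskii, Lounici and Tsybakov). Your version makes the mechanism (strong convexity of the quadratic part in the $\bfA$-seminorm) explicit at the cost of a little algebra; the paper's version hides that algebra inside the choice of $\psi$. One minor point worth stating explicitly in your write-up: the sum rule $\partial(G+\pen)=\nabla G+\partial\pen$ is licensed here because $\pen$ is finite-valued on all of $\R^p$, hence continuous, and $G$ is differentiable everywhere --- you gesture at this, and it is indeed immediate.
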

\begin{proof}
Let us introduce the function $\Phi(\bbeta) =  \|\bfA\bbeta\|^2_2-\frac2n\bY^\top\Xlab\bbeta +\pen(\bbeta)$ for every $\bbeta\in\R^p$, so that
$\hat\bbeta$ is a minimum point of $\Phi$. Since the latter is a convex function, we know that the zero vector $\bzero_p$ of $\R^p$ belongs to the sub-differential
$\partial \Phi(\hat\bbeta)$ of $\Phi$ at $\hat\bbeta$.
For all $\bbeta\in\R^p$, let
\begin{equation}
    \psi(\bbeta) = \|\bfA(\bbeta-\hat\bbeta)\|^2_2,
    \qquad
    \bar\Phi (\bbeta) = \Phi(\bbeta) - \psi(\bbeta).
\end{equation}
The function $\psi$ is proper and convex.
It is also differentiable on $\R^p$
and the sub-differential of $\psi$ at $\hat\bbeta$ is reduced to its gradient at $\hat\bbeta$, so that
$\partial\psi(\hat\bbeta) = \{ \nabla \psi (\hat\bbeta) \} = \{\bzero_p\}$.
The function $\bar\Phi$ defined on $\R^p$
is the sum of an affine function and the convex function $\pen$, thus it is also convex.
The functions $\psi,\bar\Phi$ are proper and convex,
the function $\psi$ is continuous on $\R^p$ so
by the Moreau-Rochafellar Theorem,
\begin{equation}
    \partial\Phi(\hat\bbeta)
    =
    \partial\psi(\hat\bbeta)
    +
    \partial\bar\Phi(\hat\bbeta)
    =
    \{\bzero_p\}
    +
    \partial\bar\Phi(\hat\bbeta)
    =
    \partial\bar\Phi(\hat\bbeta).
\end{equation}
Thus $\bzero_p\in\partial\bar\Phi(\hat\bbeta)$, which can be rewritten as
\begin{equation}
    \bar\Phi(\bbeta) \ge \bar\Phi(\hat\bbeta), \qquad \forall \bbeta\in\R^p.
\end{equation}
By adding $\psi(\bbeta)$ on both sides of the previous display, we obtain
\begin{equation}
\Phi(\bbeta) \ge \Phi(\hat\bbeta) + \|\bfA(\hat\bbeta-\bbeta)\|_2^2,\qquad \forall \bbeta\in\R^p.
\end{equation}
Rearranging the terms of this inequality, we get the claim of the lemma.
\end{proof}

We will also repeatedly use the following result.

\begin{lemma}\label{lem:2}
For any pair of vectors $\bbeta,\bbeta'\in \R^p$, for any pair of scalars $\mu>0$ and $\gamma>1$, for any $p\times p$ symmetric matrix $\bfA$ and for
any set $J\subseteq [p]$, the following inequality is true
\begin{align}
2\mu\gamma^{-1}\Big(\|\bbeta-\bbeta'\|_1+\gamma\|\bbeta\|_1-\gamma\|\bbeta'\|_1\Big) -\|\bfA(\bbeta-\bbeta')\|_2^2\le 4\mu\|\bbeta_{J^c}\|_1+ \frac{(\gamma+1)^2\mu^2|J|}{\gamma^2\kappa_{\bfA^2}(J,c_\gamma)},
\end{align}
where $c_\gamma = (\gamma+1)/(\gamma-1)$.
\end{lemma}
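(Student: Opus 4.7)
The plan is to reduce the inequality to an elementary one-variable optimisation after first isolating an $\ell_1$ expression in the vector $\bu=\bbeta-\bbeta'$. First I would decompose both $\bbeta$ and $\bbeta'$ into their coordinates on $J$ and $J^c$ and bound
\begin{equation}
\|\bbeta\|_1-\|\bbeta'\|_1\;\le\;\|\bu_J\|_1-\|\bu_{J^c}\|_1+2\|\bbeta_{J^c}\|_1
\end{equation}
by two applications of the triangle inequality: $\|\bbeta_J\|_1-\|\bbeta'_J\|_1\le\|\bu_J\|_1$ and $\|\bbeta_{J^c}\|_1-\|\bbeta'_{J^c}\|_1\le\|\bbeta_{J^c}\|_1-(\|\bu_{J^c}\|_1-\|\bbeta_{J^c}\|_1)$. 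Plugging this into the left-hand side, writing $\|\bu\|_1=\|\bu_J\|_1+\|\bu_{J^c}\|_1$, and collecting terms, the quantity to bound becomes
\begin{equation}
\tfrac{2\mu(\gamma+1)}{\gamma}\|\bu_J\|_1-\tfrac{2\mu(\gamma-1)}{\gamma}\|\bu_{J^c}\|_1+4\mu\|\bbeta_{J^c}\|_1-\|\bfA\bu\|_2^2.
\end{equation}

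Next I would factor the two linear terms. Since $c_\gamma=(\gamma+1)/(\gamma-1)$, they combine as $\tfrac{2\mu(\gamma-1)}{\gamma}\bigl(c_\gamma\|\bu_J\|_1-\|\bu_{J^c}\|_1\bigr)$. Set $x:=c_\gamma\|\bu_J\|_1-\|\bu_{J^c}\|_1$. If $x\le 0$ (the case $\|\bu_{J^c}\|_1\ge c_\gamma\|\bu_J\|_1$), the linear piece is non-positive and we are already done, since the right-hand side is non-negative. If $x>0$, then $\bu$ lies in the cone appearing in the definition of $\kappa_{\bfA^2}(J,c_\gamma)$, which yields
\begin{equation}
\|\bfA\bu\|_2^2\;\ge\;\frac{\kappa_{\bfA^2}(J,c_\gamma)}{c_\gamma^2|J|}\,x^2.
\end{equation}

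It then remains to bound the scalar function $g(x)=\tfrac{2\mu(\gamma-1)}{\gamma}x-\tfrac{\kappa_{\bfA^2}(J,c_\gamma)}{c_\gamma^2|J|}x^2$. This is a concave quadratic whose maximum is
\begin{equation}
\sup_{x\in\mathbb{R}}g(x)\;=\;\frac{\mu^2(\gamma-1)^2}{\gamma^2}\cdot\frac{c_\gamma^2|J|}{\kappa_{\bfA^2}(J,c_\gamma)}\;=\;\frac{(\gamma+1)^2\mu^2|J|}{\gamma^2\kappa_{\bfA^2}(J,c_\gamma)},
\end{equation}
where the final equality uses $c_\gamma(\gamma-1)=\gamma+1$. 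Adding the residual term $4\mu\|\bbeta_{J^c}\|_1$ gives exactly the claimed bound.

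The only mildly delicate point is the bookkeeping in the first step: one must be careful to keep the sign of $2\|\bbeta_{J^c}\|_1$ correct when crossing back to $\bu$, so that the $2\gamma$ factor from multiplying by $\gamma$ on the $\|\bbeta\|_1-\|\bbeta'\|_1$ term combined with the $2\mu/\gamma$ prefactor produces the advertised constant $4\mu$ in front of $\|\bbeta_{J^c}\|_1$. Everything else is the one-line compatibility inequality and a textbook quadratic maximisation.
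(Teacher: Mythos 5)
Your proof is correct and takes essentially the same route as the paper's: the same triangle-inequality decomposition of $\|\bbeta\|_1-\|\bbeta'\|_1$ over $J$ and $J^c$, the same case split on whether $\bu$ lies in the compatibility cone, and the same use of $\kappa_{\bfA^2}(J,c_\gamma)$ followed by maximising a concave quadratic. The only (immaterial) difference is that the paper completes the square in the variable $\|\bfA\bu\|_2$ while you optimise over $x=c_\gamma\|\bu_J\|_1-\|\bu_{J^c}\|_1$; both give the same constant.
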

\begin{proof}
To ease notation, we set $\bu = \bbeta-\bbeta'$. Using that $\Vert\bbeta_J\Vert_1-\Vert\bbeta'_J\Vert_1\le\Vert\bu_J\Vert_1$ and
$\Vert\bbeta_{J^c}\Vert_1+\Vert\bbeta'_{J^c}\Vert_1\ge\Vert\bu_{J^c}\Vert_1$, we obtain
\begin{align}
\|\bu\|_1+\gamma\|\bbeta\|_1-\gamma\|\bbeta'\|_1
& =\|\bu\|_1+\gamma\big(\|\bbeta_J\|_1+\|\bbeta_{J^c}\|_1-\|\bbeta'_J\|_1-\|\bbeta'_{J^c}\|_1\big)\\
&=\|\bu\|_1+2\gamma\|\bbeta_{J^c}\|_1+ \gamma\big(\|\bbeta_J\|_1-\|\bbeta'_J\|_1\big) -\gamma\big(\|\bbeta'_{J^c}\|_1+\|\bbeta_{J^c}\|_1\big)\\
&\le \|\bu\|_1+2\gamma\|\bbeta_{J^c}\|_1+\gamma\|\bu_J\|_1-\gamma\|\bu_{J^c}\|_1\\
&=2\gamma\|\bbeta_{J^c}\|_1+(\gamma+1)\|\bu_J\|_1-(\gamma-1)\|\bu_{J^c}\|_1\\
&=2\gamma\|\bbeta_{J^c}\|_1+(\gamma+1)\big(\|\bu_J\|_1-c_\gamma^{-1}\|\bu_{J^c}\|_1\big).
\label{thtranseq3}
\end{align}
If $c_\gamma\|\bu_J\|_1<\|\bu_{J^c}\|_1$, the claim of the lemma is straightforward.
Otherwise, $\|\bu_{J^c}\|_1\le c_\gamma\|\bu_J\|_1$ and using the definition of the compatibility constant we get
\begin{align}
    \frac{2\lambda(\gamma+1)}{\gamma}\big(\|\bu_J\|_1-c_\gamma^{-1}\|\bu_{J^c}\|_1\big)-\|\bfA\bu\|_2^2
    &\le\frac{2\lambda(\gamma+1)}{\gamma}\bigg(\frac{|J|\cdot \|\bfA\bu\|_2^2}{\kappa_{\bfA^2}(J,c_\gamma)}\bigg)^{1/2}-\|\bfA\bu\|_2^2
    \\
    &\le\frac{(\gamma+1)^2\lambda^2|J|}{\gamma^2\kappa_{\bfA^2}(J,c_\gamma)},\qquad[\text{by Cauchy-Schwarz}]
\end{align}
which completes the proof.
\end{proof}

To close this subsection of auxiliary results, we provide simple upper bounds on the quantiles of some
random noise variables.
\begin{proposition}
\label{proptrans:3}
Let $m=N-n$ and $n_{\star}= n\wedge m$. Introduce the random vectors $\bzeta^{(1)} = \frac1{n}\sum_{i=1}^{n} Y_i\bX_i-\esp [Y \bX]$,
\begin{align}
\bzeta &=  \frac1{n}\sum_{i=1}^{n} Y_i\bX_i-\frac1{m}\sum_{i=n+1}^{n+m} f^{\star}(\bX_i)\bX_i\quad\text{and}\quad
\bar\bzeta =  \frac1{n}\sum_{i=1}^{n} Y_i\bX_i-\frac1{N}\sum_{i=1}^{N} f^{\star}(\bX_i)\bX_i.
\end{align}
Under Assumptions \ref{A1} and \ref{A2}, and for any $\delta\in (0,1)$, each of the following inequalities
\begin{align}
\|\bzeta^{(1)}\|_\infty &\le 2B_Y\Big(\frac{\log(2p/\delta)}{n}\Big)^{1/2}\bigg[1+\frac{B_X}{3}\Big(\frac{\log(2p/\delta)}{n}\Big)^{1/2}\bigg]\\
\|\bzeta\|_\infty &\le 2B_Y\Big(\frac{\log(2p/\delta)}{n_{\star}}\Big)^{1/2}\bigg[1+\frac{B_X}{3}\Big(\frac{\log(2p/\delta)}{n_{\star}}\Big)^{1/2}\bigg]\\
\|\bar\bzeta\|_\infty &\le 2B_Y\Big(\frac{\log(2p/\delta)}{n}\Big)^{1/2}\bigg[1+\frac{B_X}{2}\Big(\frac{\log(2p/\delta)}{n}\Big)^{1/2}\bigg]
\end{align}
holds with probability at least $1-\delta$.
\end{proposition}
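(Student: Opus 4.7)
The proof is a coordinate-by-coordinate application of Bernstein's inequality followed by a union bound over $j \in [p]$. The common thread is to express each coordinate of $\bzeta^{(1)}$, $\bzeta$, or $\bar\bzeta$ as a sum of independent, bounded, centred random variables, identify the maximum magnitude $M$ of the summands and the total variance $\sigma^2$ of the sum, and then invert the Bernstein tail via the elementary fact that any $t \ge 2\sigma\sqrt{L/n_\star} + (2/3) M L / n_\star$ is sufficient to push Bernstein's bound below $2\mathrm{e}^{-L}$; taking $L = \log(2p/\delta)$ and union-bounding over the $p$ coordinates yields the stated inequality in each case.

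For $\bzeta^{(1)}$, each coordinate is the centred average $(1/n)\sum_{i=1}^n (Y_i X_i^j - \esp[Y X^j])$ of i.i.d.\ random variables, with $|Y_i X_i^j| \le B_X B_Y$ by \ref{A2} and $\esp[(Y_i X_i^j)^2] \le \esp[(X_i^j)^2\,\esp[Y_i^2 \mid \bX_i]] \le B_Y^2$ by \ref{A1} and the tower property. For $\bzeta$, the two halves are based on disjoint and hence independent samples, and $\esp[Y\bX] = \esp[f^\star(\bX)\bX]$ ensures that $\bzeta$ is centred; writing $\bzeta_j = \sum_{i=1}^N W_{i,j}$ with $W_{i,j} = Y_i X_i^j / n$ on the labelled half and $W_{i,j} = -f^\star(\bX_i) X_i^j / m$ on the unlabelled half, and exploiting $|f^\star(\bX)| \le B_Y$ (from \ref{A2} and Jensen), one reads off $\max_i |W_{i,j}| \le B_X B_Y / n_\star$ and $\sum_i \mathrm{Var}(W_{i,j}) \le B_Y^2 (1/n + 1/m)$, which delivers the second inequality after the inversion.

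The case of $\bar\bzeta$ is analogous after writing $\bar\bzeta_j = \sum_{i=1}^N (Z_{i,j} - \esp Z_{i,j})$ with $Z_{i,j} = Y_i X_i^j / n - f^\star(\bX_i) X_i^j / N$ for $i \le n$ and $Z_{i,j} = -f^\star(\bX_i) X_i^j / N$ otherwise: the conditioning identity $\mathrm{Cov}(Y X^j, f^\star(\bX) X^j) = \mathrm{Var}(f^\star(\bX) X^j)$ (obtained by conditioning on $\bX$ and using $\esp[Y \mid \bX] = f^\star(\bX)$) yields the sharp telescoping estimate $\sum_i \mathrm{Var}(Z_{i,j}) = (1/n)\mathrm{Var}(Y X^j) - (1/N)\mathrm{Var}(f^\star(\bX) X^j) \le B_Y^2/n$, while $\max_i |Z_{i,j}| \le 2 B_X B_Y / n$. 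The main obstacle is not conceptual but purely arithmetic: pushing the Bernstein constants through the quadratic inversion so that the three tail bounds come out in the advertised user-friendly form; in particular, the leading constant for $\bar\bzeta$ hinges on the variance cancellation above, without which one would lose a factor $\sqrt{2}$ in the leading term.
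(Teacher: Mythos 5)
Your proof is correct and follows essentially the same route as the paper's: a coordinate-wise application of Bernstein's inequality to sums of independent bounded summands (with the variance bounds $\esp[(YX^j)^2]\le B_Y^2$ and $\esp[(f^\star(\bX)X^j)^2]\le B_Y^2$ coming from \ref{A1}, \ref{A2} and $|f^\star|\le B_Y$), followed by a union bound over the $p$ coordinates. The paper only writes out the $\bzeta$ case and leaves the other two to the reader; your covariance identity $\mathrm{Cov}(YX^j,f^\star(\bX)X^j)=\mathrm{Var}(f^\star(\bX)X^j)$ is a correct and clean way to handle the overlapping indices in the $\bar\bzeta$ case, and the remaining discrepancies (centred versus uncentred bounds on the summands) only affect the bookkeeping of constants, which still come out within the stated bounds.
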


\begin{proof} We will only prove the inequality corresponding to $\bzeta$. The others being
very similar are left to the reader. Denote $\bmu=\esp[Y\bX]=\esp[f^{\star}(\bX)\bX]\in\R^p$,
and introduce the random vectors
$$
\bZ_{i}=
\begin{cases}
N(Y_i\bX_{i}-\bmu)/n,  & i\in[n],\\
N(\bmu-f^{\star}(\bX_i)\bX_{i})/m,  & i\in [N]\setminus[n].
\end{cases}
$$
The vectors $\bZ_i$ are independent, centered, bounded and satisfy
$$\bzeta=\frac{\bZ_1+\dots+\bZ_N}{N}.$$
Furthermore, Assumption \ref{A2} implies that $\|\bZ_i\|_\infty\le 2N B_YB_X/n$ if
$i\le n$ and that $\|\bZ_i\|_\infty\le 2NB_YB_X/m$ if $i>n$. One can also bound from above the
variance of the $j$-th component $Z_{ij}$ of $\bZ_i$ as follows. If $i\le n$ then, in view of
Assumptions~\ref{A1} and \ref{A2}, $\esp[Z_{ij}^2]\le (N/n)^2 \esp[Y_i^2X_{ij}^2]\le (NB_Y/n)^2$.
Similarly, if $i> n$ then $\esp[Z_{ij}^2]\le (NB_Y/m)^2$. Hence, we may easily deduce that, for
all $j\in[p]$,
$$
\frac{1}{N}\sum_{i=1}^{N}\esp[Z^{2}_{ij}]\le \frac{2NB^{2}_{Y}}{n_{\star}}.
$$
Therefore, using the Bernstein inequality recalled in Proposition \ref{prop:Bernstein_real} of
Appendix \ref{A}, for every $j\in[p]$ and every $\delta>0$, we get that inequality
\begin{align}
|\zeta_j|>2B_Y\left(\frac{\log(2p/\delta)}{n_{\star}}\right)^{1/2}+\frac{2B_YB_X\log(2p/\delta)}{3n_{\star}}
\end{align}
holds with probability at most $\delta/p$. The claim of Proposition \ref{proptrans:3} follows from
the union bound.
\end{proof}
\begin{remark}
One can easily check that the inequality $\esp[Z_{ij}^2]\le (NB_Y/n)^2$, for $i=1,\ldots,n$,
used in the previous proof can be replaced by $\esp[Z_{ij}^2]\le (N L_YB_X/n)^2$, where
$L_Y = (\esp[Y_i^2])^{1/2}$. This may lead to a better risk bound in the cases where the random
variable $Y_i$ is not well concentrated around its average value.
\end{remark}

We are now in a position to prove the main theorems of this paper.

\subsection{Proof of \Cref{thtransductive}}

The proof of Theorem \ref{thtransductive} follows directly from  \Cref{proptrans:3} and \Cref{proptrans:1}  below.
For simplicity, the parameter $\gamma>1$ introduced in Proposition \ref{proptrans:1} is fixed at the value $\gamma=2$ in
\Cref{thtransductive}.
\begin{proposition}
\label{proptrans:1}
Let $\bzeta$ be as in \Cref{proptrans:3}. For any $\gamma>1$, we set $c_\gamma = (\gamma+1)/(\gamma-1)$. On the event
$\mathscr E = \{\|\bzeta\|_\infty\le \lambda/\gamma\}$, for every $\bbeta\in\R^p$ and every $J\subseteq[p]$, we have
\begin{align}
\transRisk(f_{\hat\bbeta}) \le  \transRisk(f_{\bbeta})
    + 4\lambda\|\bbeta_{J^c}\|_1+ \frac{(\gamma+1)^2\lambda^2|J|}{\gamma^2\kappa_{\hat\bfSigma_{\rm unlab}}(J,c_\gamma)}.
    \end{align}
\end{proposition}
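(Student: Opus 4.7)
The plan is to combine \Cref{lem:1} (applied to $\pen(\bbeta)=2\lambda\Vert\bbeta\Vert_1$ and $\bfA=\hat\bfSigma^{1/2}_{\rm unlab}$) with the algebraic identity in \Cref{lem:2}, after re-expressing the transductive risk in a form compatible with both lemmas. First, I would set $\bmu_{\rm unlab}=\frac{1}{N-n}\sum_{i=n+1}^{N}f^{\star}(\bX_i)\bX_i$ and expand the square in the definition of $\transRisk$, observing that
\begin{equation}
\transRisk(f_{\bgamma})=\Vert\hat\bfSigma^{1/2}_{\rm unlab}\bgamma\Vert_{2}^{2}-2\bmu_{\rm unlab}^{\top}\bgamma+C,\qquad\forall\bgamma\in\R^{p},
\end{equation}
where the constant $C=\frac{1}{N-n}\sum_{i=n+1}^{N}f^{\star}(\bX_i)^{2}$ depends only on the unlabeled sample. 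Subtracting the version at $\bgamma=\bbeta$ from that at $\bgamma=\hat\bbeta$ cancels $C$ and gives a clean difference in terms of $\hat\bfSigma_{\rm unlab}$ and $\bmu_{\rm unlab}$.

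Next, I would invoke \Cref{lem:1} with $\bfA=\hat\bfSigma^{1/2}_{\rm unlab}$ and $\pen(\bbeta)=2\lambda\Vert\bbeta\Vert_{1}$ applied to the estimator defined in the transductive setting, yielding
\begin{equation}
\Vert\hat\bfSigma^{1/2}_{\rm unlab}\hat\bbeta\Vert_{2}^{2}-\Vert\hat\bfSigma^{1/2}_{\rm unlab}\bbeta\Vert_{2}^{2}\le \tfrac{2}{n}\bY^{\top}\Xlab(\hat\bbeta-\bbeta)+2\lambda\Vert\bbeta\Vert_{1}-2\lambda\Vert\hat\bbeta\Vert_{1}-\Vert\hat\bfSigma^{1/2}_{\rm unlab}(\hat\bbeta-\bbeta)\Vert_{2}^{2}.
\end{equation}
Combining with the expansion from the previous step and using that $\frac{1}{n}\sum_{i=1}^{n}Y_i\bX_i-\bmu_{\rm unlab}=\bzeta$ by the very definition of $\bzeta$ in \Cref{proptrans:3}, the cross-term $\frac{2}{n}\bY^{\top}\Xlab(\hat\bbeta-\bbeta)-2\bmu_{\rm unlab}^{\top}(\hat\bbeta-\bbeta)$ collapses to $2\bzeta^{\top}(\hat\bbeta-\bbeta)$.

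Finally, on the event $\mathscr{E}=\{\Vert\bzeta\Vert_{\infty}\le \lambda/\gamma\}$, Hölder's inequality gives $2\bzeta^{\top}(\hat\bbeta-\bbeta)\le \tfrac{2\lambda}{\gamma}\Vert\hat\bbeta-\bbeta\Vert_{1}$, so the bound becomes
\begin{equation}
\transRisk(f_{\hat\bbeta})-\transRisk(f_{\bbeta})\le \tfrac{2\lambda}{\gamma}\Big(\Vert\hat\bbeta-\bbeta\Vert_{1}+\gamma\Vert\bbeta\Vert_{1}-\gamma\Vert\hat\bbeta\Vert_{1}\Big)-\Vert\hat\bfSigma^{1/2}_{\rm unlab}(\hat\bbeta-\bbeta)\Vert_{2}^{2}.
\end{equation}
Applying \Cref{lem:2} with $\mu=\lambda$, $\bbeta'=\hat\bbeta$, and $\bfA=\hat\bfSigma^{1/2}_{\rm unlab}$ (so that $\bfA^{2}=\hat\bfSigma_{\rm unlab}$) directly yields the claim.

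The proof is essentially routine once the decomposition is in place; the only delicate point—and the main obstacle—is correctly identifying the noise vector $\bzeta$ inside the expansion, i.e.\ ensuring that the term $\frac{2}{n}\bY^{\top}\Xlab$ coming from \Cref{lem:1} and the term $2\bmu_{\rm unlab}^{\top}$ coming from the transductive risk expansion combine to reproduce exactly the vector $\bzeta$ of \Cref{proptrans:3} (as opposed to, say, a variant involving the population mean $\esp[Y\bX]$). This bookkeeping is what makes the proof specific to the transductive choice $\bfA=\hat\bfSigma^{1/2}_{\rm unlab}$ and explains why the semi-supervised setting in \Cref{sec:4} requires a more intricate argument.
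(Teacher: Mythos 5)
Your proposal is correct and follows essentially the same route as the paper's proof: expand the square in $\transRisk$, cancel the constant term by differencing, apply \Cref{lem:1} with $\pen(\bbeta)=2\lambda\|\bbeta\|_1$ so that the cross-terms recombine into $2\bzeta^{\top}(\hat\bbeta-\bbeta)$, bound this by H\"older on $\mathscr E$, and conclude with \Cref{lem:2} with $\mu=\lambda$. The bookkeeping you flag as the delicate point (that $\tfrac1n\Xlab^{\top}\bY-\bmu_{\rm unlab}$ is exactly the $\bzeta$ of \Cref{proptrans:3}) is handled identically in the paper, so nothing further is needed.
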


\begin{proof}
Along the proof, we will use for convenience the shorthand notations $m=N-n$ and  $\mathbf A=\hat\bfSigma_{\rm unlab}^{1/2}$.
First, notice that developing the square in the expression $\transRisk(f_{\bbeta})=
\frac1{m}\sum_{i=n+1}^N \big(\bX_i^\top\bbeta-f^{\star}(\bX_i)\big)^2$, we get
\begin{align}
\transRisk(f_{\bbeta})
		&= \|\mathbf A\bbeta\|_2^2 - \bigg(\frac2{m}\sum_{i=n+1}^{n+m} f^{\star}(\bX_i)\bX_i^\top\bigg)\bbeta+\frac1{m}\sum_{i=n+1}^{n+m} f^{\star}(\bX_i)^2\\
    &= \|\mathbf A\bbeta\|_2^2 +2\bzeta^\top\bbeta-\frac2n\bY^\top\Xlab\bbeta+\frac1{m}\sum_{i=n+1}^{n+m} f^{\star}(\bX_i)^2.
\end{align}
This implies that for every $\bbeta\in\R^p$, we have
\begin{align}
\transRisk(f_{\hat\bbeta}) - \transRisk(f_{\bbeta})
    &= \|\bfA\hat\bbeta\|_2^2-\|\bfA\bbeta\|_2^2+2\bzeta^\top(\hat\bbeta-\bbeta)-\frac2n\bY^\top\Xlab(\hat\bbeta-\bbeta).
\end{align}
Using \Cref{lem:1} with the convex penalty term $\pen(\bbeta)=2\lambda\Vert \bbeta\Vert_1$ , we deduce that, for every $\bbeta\in\R^p$,
\begin{align}
\transRisk(f_{\hat\bbeta}) - \transRisk(f_{\bbeta})
    &\le 2\bzeta^\top(\bbeta-\hat\bbeta)+2\lambda(\|\bbeta\|_1-\|\hat\bbeta\|_1)-\|\bfA(\bbeta-\hat\bbeta)\|_2^2.\label{thtranseq1}
\end{align}
On the event $\mathscr E$, note that $2\bzeta^\top(\bbeta-\hat\bbeta)\le 2\Vert\bzeta\Vert_{\infty}\Vert\bbeta-\hat\bbeta\Vert_1\le
\frac{2\lambda}{\gamma}\Vert\bbeta-\hat\bbeta\Vert_1$,
which leads to
\begin{align}
2\bzeta^\top(\bbeta-\hat\bbeta)+2\lambda(\|\bbeta\|_1-\|\hat\bbeta\|_1)
&\le \frac{2\lambda}{\gamma}\left(\|\bbeta-\hat\bbeta\|_1+\gamma\|\bbeta\|_1-\gamma\|\hat\bbeta\|_1\right).
\label{thtranseq2}
\end{align}
Combining equations \eqref{thtranseq1} and \eqref{thtranseq2}, we get that
on the event $\mathscr E$, for every $\bbeta\in\R^p$ and every $J\subseteq[p]$,
\begin{equation}
    \transRisk(f_{\hat\bbeta}) - \transRisk(f_{\bbeta})
    \le 2\lambda\gamma^{-1}\big(\|\bbeta-\hat\bbeta\|_1+\gamma\|\bbeta\|_1-\gamma\|\hat\bbeta\|_1\big)-\|\bfA(\bbeta-\hat\bbeta)\|_2^2.
		\label{thtra}
\end{equation}
The claim of the proposition follows from \cref{thtra} by  applying \Cref{lem:2} with $\mu=\lambda$.
\end{proof}
To conclude the proof of \Cref{thtransductive}, it suffices to note that in view of \Cref{proptrans:3}, the probability of
the event $\mathscr E = \{\|\bzeta\|_\infty\le \lambda/\gamma\}$ is larger than $1-\delta$ provided that
$$
\lambda\ge 2\gamma B_Y\Big(\frac{\log(2p/\delta)}{n_{\star}}\Big)^{1/2}\bigg[1+\frac{B_X}{3}\Big(\frac{\log(2p/\delta)}{n_{\star}}\Big)^{1/2}\bigg].
$$

\subsection{Proofs for the semi-supervised version of the lasso}

We start this section by some arguments that are shared by the proofs of both theorems stated in \Cref{sec:4}.
Let $J\subseteq [p]$ and let $\bbeta$ be a minimizer of the right hand side of \eqref{eq:soi-semisup}. Note in
particular that $\bbeta$ is a deterministic vector depending on the unknown distribution $P$ of the data. In addition,
if the model is well-specified and $J=J^\star$ then $\bbeta=\bbeta^\star$. We will also use the notation
$\bu=\hat\bbeta-\bbeta$ and
\begin{equation}
\bzeta^{(1)} = \frac1{n}\sum_{i=1}^{n} Y_i\bX_i-\esp [Y \bX]\quad\mbox{and}\quad\bzeta^{(2)} =\big(\bfSigma-\hat\bfSigma_{\rm all}\big)\bbeta.
\label{eq:def-bnu}
\end{equation}
Furthermore, to ease notation, we set $\hat\bfSigma_N = \hat\bfSigma_{\rm all}$, $\hat\bfSigma_n = \hat\bfSigma_{\rm lab}$,
$\bfA = \hat\bfSigma_N^{1/2}$. First, observe that the excess risk $\mathcal E(f_{\hat\bbeta})
= \int_{\mathcal X}\big(\bx^{\top}\hat\bbeta-f^{\star}(\bx)\big)^2P_X({\rm d}\bx)$ of the predictor $f_{\hat\bbeta}$ satisfies
\begin{align}
    \mathcal E(f_{\hat\bbeta})
                                &= \int_{\mathcal X}\big\{(\bx^{\top}\bu)^2 + 2\bu^\top \bx\big(\bx^\top\bbeta-f^{\star}(\bx)\big)+\big(\bx^\top\bbeta-f^{\star}(\bx)\big)^2\big\}P_X({\rm d}\bx)  \\
                                &=\Vert\bfSigma^{1/2}\bu\Vert^{2}_{2}+2\bu^{\top}\bfSigma\bbeta-2\bu^{\top}\esp\left[\bX f^{\star}(\bX)\right]+\mathcal E(f_{\bbeta}).
    \label{thssms:eq1}
\end{align}
Next, notice that
\begin{equation}
\Vert\bfSigma^{1/2}\bu\Vert^{2}_{2}=\bu^{\top}(\bfSigma-\hat\bfSigma_N)\bu+\Vert\bf A\bu\Vert^{2}_{2},
\label{thssms:eq2}
\end{equation}
and that
\begin{align}
    2\bu^{\top}\bfSigma\bbeta&=2\bu^{\top}(\bfSigma-\hat\bfSigma_N)\bbeta+2\bu^{\top}\hat\bfSigma_N\bbeta \\
                             &=2\bu^{\top}(\bfSigma-\hat\bfSigma_N)\bbeta+\Vert\bf A\hat\bbeta\Vert^{2}_{2}-\Vert\bf A\bu\Vert^{2}_{2}-\Vert\bf A\bbeta\Vert^{2}_{2},
\label{thssms:eq3}
\end{align}
where in the last line we have used the identity $2a^{\top}b=\Vert a+b\Vert^{2}_{2}-\Vert a\Vert^{2}_{2}-\Vert b\Vert^{2}_{2}$ with $a=\bf A\bu$ and $b=\bf A\bbeta$. Transforming \cref{thssms:eq1} thanks to \eqref{thssms:eq2} and \eqref{thssms:eq3} we obtain
\begin{align}
    \mathcal E(f_{\hat\bbeta})-\mathcal E(f_{\bbeta})&=\bu^{\top}(\bfSigma-\hat\bfSigma_N)\bu+2\bu^{\top}(\bfSigma-\hat\bfSigma_N)\bbeta
                                                     + \Vert{\bf A}\hat\bbeta\Vert^{2}_{2}-\Vert{\bf A}\bbeta\Vert^{2}_{2}-2\bu^{\top}\esp\left[Y\bX\right] \\
                                                     &=\bu^{\top}(\bfSigma-\hat\bfSigma_N)\bu+2\bu^{\top}\bzeta^{(2)}
                                                     + \Vert{\bf A}\hat\bbeta\Vert^{2}_{2}-\Vert{\bf A}\bbeta\Vert^{2}_{2}+2\bu^{\top}\bzeta^{(1)}-\frac{2}{n}\bY^{\top}{\bf X}_n\bu,
    \label{thssms:eq4}
\end{align}
where we have used the identity $\esp\left[Y\bX\right]=\esp\left[\bX f^{\star}(\bX)\right]$ and the definitions of $\bzeta^{(1)}$ and $\bzeta^{(2)}$.
Applying Lemma~\ref{lem:1} with $\pen(\bbeta)=2\lambda\Vert \bbeta\Vert_{1}$ and combining its result with \eqref{thssms:eq4}, we arrive at
\begin{equation}
    \mathcal E(f_{\hat\bbeta})-\mathcal E(f_{\bbeta}) \le \underbrace{2\bu\!^{\top}(\bzeta^{(1)}+\bzeta^{(2)}) + 2\lambda(\|\bbeta\|_1-\|\hat\bbeta\|_1)}_{{\bf T}_1}
                                                      + \underbrace{\bu\!^{\top}(\bfSigma-\hat\bfSigma_N)\bu-\Vert{\bf A}\bu\Vert^{2}_{2}}_{{\bf T}_2}.\label{thssms:eq4b}
\end{equation}

\subsubsection{Proof of \Cref{thssws}.}

As mentioned earlier, in the well-specified setting we have $\bbeta=\bbeta^\star$ and, therefore, $\excessRisk(f_{\hat\bbeta}) =
\|\bfSigma^{1/2}\bu\|_2^2$ and $\excessRisk(f_{\bbeta^\star})=0$.  Hence, \eqref{thssms:eq4b} yields
\begin{equation}\label{thssws:eq4}
2\|\hat\bfSigma_{N}^{1/2}\bu\|_2^2\le
	2\bu^\top\big(\bzeta^{(1)}+\bzeta^{(2)}\big)+2\lambda(\|\bbeta^\star\|_1-\|\bbeta^\star+\bu\|_1).
\end{equation}
Combining the duality inequality $|\bu^\top\big(\bzeta^{(1)}+\bzeta^{(2)}\big)|\le \|\bzeta^{(1)}+\bzeta^{(2)}\|_\infty\|\bu\|_1$
with the following one
$\|\bbeta^\star\|_1-\|\bbeta^\star+\bu\|_1 = \|\bbeta^\star_{J^\star}\|_1-\|\bbeta^\star_{J^\star}+\bu_{J^\star}\|_1-\|\bu_{(J^\star)^c}\|_1
\le \|\bu_{J^\star}\|_1-\|\bu_{(J^\star)^c}\|_1$,
we infer from inequality \eqref{thssws:eq4} that on the event
$\mathscr E = \big\{2\|\bzeta^{(1)}+\bzeta^{(2)}\|_\infty\le \lambda\big\}$, we have
\begin{equation}\label{eq:th1:2}
2\|\hat\bfSigma_{N}^{1/2}\bu\|_2^2\le \lambda(3\|\bu_{J^\star}\|_1-\|\bu_{(J^\star)^c}\|_1).
\end{equation}
This implies that $\|\bu_{(J^\star)^c}\|_1\le 3\|\bu_{J^\star}\|_1$ and, therefore,
\begin{equation}
2\bar\kappa_{\hat\bfSigma_{N}}(J^\star,3)\|\bu_{J^\star}\|_1^2\le 2s^\star\|\hat\bfSigma_{N}^{1/2}\bu\|_2^2\le
3\lambda s^\star\|\bu_{J^\star}\|_1.
\end{equation}
This yields $\|\bu_{J^\star}\|_1\le 3\lambda s^\star/(2\bar\kappa_{\hat\bfSigma_{N}}(J^\star,3))$ and, since
$\max_{j,j'}|\bfSigma_{j,j'}|\le 1$, $\displaystyle\|\bfSigma^{1/2}\bu\|_2 \le \|\bu\|_1\le 4\|\bu_{J^\star}\|_1$,
which implies that
\begin{equation}\label{eq:th1:3}
\excessRisk(f_{\hat\bbeta})=\|\bfSigma^{1/2}\bu\|_2^2
		\le {\bigg(\frac{6\lambda s^\star}{\bar\kappa_{\hat\bfSigma_{N}}(J^\star,3)}\bigg)\!}^2 .
\end{equation}
On the other hand, if we denote by $I$ the set of the $s^\star$ largest entries of the vector $|\bu|$, inequality \eqref{eq:th1:2}
implies that $2\|\hat\bfSigma_{N}^{1/2}\bu\|_2^2\le \lambda(3\|\bu_{I}\|_1-\|\bu_{I^c}\|_1)$.

Therefore,  using the definition of the restricted eigenvalue and similar arguments as above, we deduce that $\|\bu_{I}\|_2\le 3\lambda {\color{red}\sqrt{s^\star}}/(2\kappa^{\rm RE}_{\hat\bfSigma_{N}}(I,3))$. Furthermore,
$\|\bu\|_2^2= \|\bu_I\|_2^2+\|\bu_{I^c}\|_2^2\le \|\bu_I\|_2^2+\|\bu_{I^c}\|_\infty\|\bu_{I^c}\|_1\le
\|\bu_I\|_2^2+(s^{\star})^{-1}\|\bu_{I}\|_1\|\bu_{I^c}\|_1\le \|\bu_I\|_2^2+3(s^{\star})^{-1}\|\bu_{I}\|_1^2\le 4\|\bu_I\|_2^2$. This yields
\begin{equation}\label{eq:th1:4}
\excessRisk(f_{\hat\bbeta})=\|\bfSigma^{1/2}\bu\|_2^2 \le \|\bfSigma\|\cdot\|\bu\|_2^2
\le 4\|\bfSigma\|\cdot\|\bu_I\|_2^2\le \frac{9\|\bfSigma\|\lambda^2 s^\star}{\kappa^{\rm RE}_{\hat\bfSigma_{N}}(I,3)^2}\ .
\end{equation}
Combining \eqref{eq:th1:3} and\eqref{eq:th1:4}, we get the first claim of the theorem.

To get the second claim of the theorem, we go back to \eqref{eq:th1:2} and use the following inequalities:
\begin{align}
    2\|\bfSigma^{1/2}\bu\|_2^2
    &= 2\|\hat\bfSigma^{1/2}_{N}\bu\|_2^2+2\bu^\top(\bfSigma-\hat\bfSigma_{N})\bu \\
    &\le  3\lambda\|\bu_{J^\star}\|_1+2{\|\bfSigma-\hat\bfSigma_{N}\|}_{\infty}{\|\bu\|}_{1}^2 \\
    &\le  3\lambda\|\bu_{J^\star}\|_1+32{\|\bfSigma-\hat\bfSigma_{N}\|}_{\infty}{\|\bu_{J^{\star}}\|}_{1}^2.
    \label{thssws:eq5}
\end{align}
In the sequel, let us denote $\kappa=\bar\kappa_{\bfSigma}(J^{\star},3)$ for brevity. Then, upper bounding the two instances of $\|\bu_{J^{\star}}\|_1$ in \eqref{thssws:eq5} by $(s^{\star}\|\bfSigma^{1/2}\bu\|^2_2/\kappa)^{1/2}$, we
infer that on $\mathscr E$,
\begin{align}\label{eq:th1:4}
	\|\bfSigma^{1/2}\bu\|_2^2
			&\le  \frac{3\lambda \sqrt{s^{\star}}}{2\sqrt{\kappa}}\,\|\bfSigma^{1/2}\bu\|_2+
			\frac{16s^{\star}}{\kappa}\,{\|\bfSigma-\hat\bfSigma_{N}\|}_{\infty}\|\bfSigma^{1/2}\bu\|_2^2.
\end{align}
Dividing both sides by $\|\bfSigma^{1/2}\bu\|_2$ (if this quantity vanishes then the claim of the theorem
is obviously true) and after some algebra, we get the inequality
\begin{align}\label{eq:th1:5}
	\|\bfSigma^{1/2}\bu\|_2^2
			&\le  \frac{9\lambda^2 s^{\star}\kappa}{4(\kappa-
			  16s^{\star}\,{\|\bfSigma-\hat\bfSigma_{N}\|}_{\infty})^2}\le \frac{9\lambda^2 s^{\star}}{\kappa},
\end{align}
where the last inequality holds on the event $\mathscr E\cap \{32s^{\star}\,{\|\bfSigma-\hat\bfSigma_{N}\|}_{\infty}\le\kappa\}$.
In view of the union bound, Hoeffding's inequality and Assumption \ref{A2}, we get for any $t>0$,
\begin{align}\label{eq:th1:6}
\prob\left({\|\bfSigma-\hat\bfSigma_{N}\|}_{\infty}\ge t\right)
	&\le p^2 \max_{j,j'\in[p]} \prob\left(|\sigma_{jj'}-\hat\sigma_{jj'}|\ge t\right) \le 2p^2 \exp\left(-{2Nt^2}/{B_X^4}\right),
\end{align}
where $\bfSigma=(\sigma_{ij})$ and $\hat\bfSigma_{N}=(\hat\sigma_{ij})$. Therefore, if
$$
16s^{\star}B_X^2\Big(\frac{2\log(4p^2/\delta)}{N}\Big)^{1/2}\le \kappa,
$$
then the event $\{32s^{\star}\,{\|\bfSigma-\hat\bfSigma_{N}\|}_{\infty}\le\kappa\}$ has a probability
larger than $1-(\delta/2)$. To bound the probability of $\mathscr E$, we use the fact that $\bzeta^{(1)}+\bzeta^{(2)}=
\bar\bzeta$ and the quantiles of the supremum norm of the random vector $\bar\bzeta$ have been assessed
in \Cref{proptrans:3}. This implies that the choice
$$
\lambda\ge 4B_Y\Big(\frac{\log(4p/\delta)}{n}\Big)^{1/2}+\frac{B_XB_Y\log(4p/\delta)}{n}
$$
guarantees that $P(\mathscr E) = P(\|\bzeta\|_\infty\le \lambda/2)\ge 1-(\delta/2)$. This completes the proof.


\subsubsection{Proof of \Cref{thssms}.}

We start by some auxiliary results before providing the proof of the theorem.

\begin{proposition}
\label{thssms:prop1}
Let $J\subseteq [p]$ and let $\bbeta$ be a minimizer of the right hand side of \eqref{eq:soi-semisup}. On the event
$\mathscr E= \mathscr E_1 \cap \mathscr E_2 \cap \mathscr E_3$, where
\begin{align}
\mathscr E_1 =
\big\{
\|\bzeta^{(1)}\|_\infty\le {\textstyle\frac \lambda 4}
\big\},
\quad
\mathscr E_2 =
\big\{
\|\bzeta^{(2)}\|_\infty\le {\textstyle\frac \lambda 4}
\big\},
\quad\mbox{and}\quad
\mathscr E_3 =
\big\{
    \lambda_{\min}(\bfSigma^{-1/2}\hat\bfSigma_N\bfSigma^{-1/2})\ge {\textstyle\frac 23}
\big\},
\qquad
\end{align}
we have
$$\excessRisk (f_{\hat\bbeta}) -\excessRisk (f_{\bbeta})\le4\lambda\|\bbeta_{J^c}\|_1+ \frac{9\lambda^2|J|}{2\kappa_{\hat\bfSigma_{\rm all}}(J,3)}.$$
\end{proposition}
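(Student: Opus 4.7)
The starting point is the decomposition \eqref{thssms:eq4b} obtained in the preamble of this subsection, namely $\excessRisk(f_{\hat\bbeta}) - \excessRisk(f_{\bbeta}) \le T_1 + T_2$ with $T_1 = 2\bu^{\top}(\bzeta^{(1)} + \bzeta^{(2)}) + 2\lambda(\|\bbeta\|_1 - \|\hat\bbeta\|_1)$ and $T_2 = \bu^{\top}(\bfSigma - \hat\bfSigma_N)\bu - \|\bfA\bu\|_2^2$, where $\bfA = \hat\bfSigma_N^{1/2}$ and $\bu = \hat\bbeta - \bbeta$. The plan is to control $T_1$ using $\mathscr E_1 \cap \mathscr E_2$ and $T_2$ using $\mathscr E_3$, then to recognise the resulting inequality as an instance of \Cref{lem:2} up to a rescaling of the matrix $\bfA$.

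On $\mathscr E_1 \cap \mathscr E_2$ one has $\|\bzeta^{(1)} + \bzeta^{(2)}\|_\infty \le \lambda/2$, so the $\ell_1$--$\ell_\infty$ duality gives $|2\bu^{\top}(\bzeta^{(1)} + \bzeta^{(2)})| \le \lambda\|\bu\|_1$, whence $T_1 \le \lambda(\|\bu\|_1 + 2\|\bbeta\|_1 - 2\|\hat\bbeta\|_1)$. On $\mathscr E_3$, the condition $\lambda_{\min}(\bfSigma^{-1/2}\hat\bfSigma_N\bfSigma^{-1/2}) \ge 2/3$ is equivalent to $\bfSigma \preceq \tfrac{3}{2}\hat\bfSigma_N$; since $\bu^{\top}\hat\bfSigma_N\bu = \|\bfA\bu\|_2^2$, it follows that
\[
T_2 = \bu^{\top}\bfSigma\bu - 2\|\bfA\bu\|_2^2 \le \tfrac{3}{2}\|\bfA\bu\|_2^2 - 2\|\bfA\bu\|_2^2 = -\tfrac{1}{2}\|\bfA\bu\|_2^2.
\]
Summing these two bounds and setting $\bfB := \bfA/\sqrt{2}$ yields, on $\mathscr E$,
\[
\excessRisk(f_{\hat\bbeta}) - \excessRisk(f_{\bbeta}) \le \lambda\bigl(\|\bu\|_1 + 2\|\bbeta\|_1 - 2\|\hat\bbeta\|_1\bigr) - \|\bfB\bu\|_2^2.
\]

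The right-hand side fits exactly the left-hand side of \Cref{lem:2} applied with the matrix $\bfB$ in place of $\bfA$, scalars $\mu = \lambda$ and $\gamma = 2$ (so that $c_\gamma = 3$ and $(\gamma+1)^2/\gamma^2 = 9/4$), and with $\bbeta$ as in the statement together with $\bbeta' = \hat\bbeta$. The lemma therefore gives an upper bound of $4\lambda\|\bbeta_{J^c}\|_1 + \tfrac{9\lambda^2|J|}{4\,\kappa_{\bfB^2}(J,3)}$. Since the compatibility constant is homogeneous of degree one in its matrix argument and $\bfB^2 = \hat\bfSigma_{\rm all}/2$, one has $\kappa_{\bfB^2}(J,3) = \tfrac{1}{2}\kappa_{\hat\bfSigma_{\rm all}}(J,3)$, and the stated bound follows. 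The only delicate step is the control of $T_2$: it is precisely the event $\mathscr E_3$ that allows one to absorb the bias coming from the mismatch between $\bfSigma$ and $\hat\bfSigma_N$ into a negative multiple of $\|\bfA\bu\|_2^2$, so that the compatibility constant appearing in the final oracle inequality involves the empirical covariance matrix $\hat\bfSigma_{\rm all}$ rather than the unknown population one.
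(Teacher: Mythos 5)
Your proof is correct and follows essentially the same route as the paper: the same decomposition into ${\bf T}_1$ and ${\bf T}_2$, the same use of $\mathscr E_1\cap\mathscr E_2$ for ${\bf T}_1$, the same conclusion ${\bf T}_2\le-\tfrac12\|\bfA\bu\|_2^2$ on $\mathscr E_3$ (derived via the equivalent Loewner inequality $\bfSigma\preceq\tfrac32\hat\bfSigma_N$ rather than the paper's quadratic-form manipulation), and the same final appeal to \Cref{lem:2} with $\mu=\lambda$, $\gamma=2$. Your explicit rescaling $\bfB=\bfA/\sqrt2$ together with the homogeneity $\kappa_{\hat\bfSigma_{\rm all}/2}(J,3)=\tfrac12\kappa_{\hat\bfSigma_{\rm all}}(J,3)$ correctly accounts for the factor $\tfrac12$ in front of $\|\bfA\bu\|_2^2$ and explains the denominator $2\kappa_{\hat\bfSigma_{\rm all}}(J,3)$ in the statement, a step the paper leaves implicit.
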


\begin{proof}
Our starting point in this proof is \eqref{thssms:eq4}.
We first focus on bounding ${\bf T}_{1}$.
On the event $\mathscr E_1 \cap \mathscr E_2$, we have
\begin{equation}
    {\bf T}_{1}\le 2\Vert\bzeta^{(1)}+\bzeta^{(2)}\Vert_{\infty}\Vert \bu\Vert_{1}+2\lambda(\|\bbeta\|_1-\|\hat\bbeta\|_1)
    \le \lambda\big(\Vert \bu\Vert_{1}+2\|\bbeta\|_1-2\|\hat\bbeta\|_1\big).
\label{thssms:eq5}
\end{equation}
We now look for an upper bound of the term ${\bf T}_{2}$. On the event $\mathscr E_3$, for any $\bv \in\R^p$,
\begin{align}
    \bv^\top\big(2\bfI_p -3\bfSigma^{-1/2}\hat\bfSigma_N\bfSigma^{-1/2}\big)\bv \le 0,
\end{align}
which leads to
\begin{equation}
    \bv^\top\big(\bfI_p - 2\bfSigma^{-1/2}\hat\bfSigma_N\bfSigma^{-1/2}\big)\bv\le -{\textstyle \frac 1 2 }(\bv^\top\bfSigma^{-1/2}\hat\bfSigma_N\bfSigma^{-1/2})\bv.
    \label{thssms:eq6}
\end{equation}
Therefore, applying \eqref{thssms:eq6} to $\bv = \bfSigma^{1/2} \bu$, it follows that on the event $\mathscr E_3$
\begin{align}
    {\bf T}_{2}
    =  \bv^\top \big(\bfI_p - 2\bfSigma^{-1/2} \hat\bfSigma_N\bfSigma^{-1/2}\big)\bv
    \le   -{\textstyle \frac 1 2 }    \bv^\top(\bfSigma^{-1/2} \hat\bfSigma_N\bfSigma^{-1/2})\bv
    = - {\textstyle \frac 1 2 } \|\bfA\bu\|_2^2.
\label{thssms:eq7}
\end{align}
To sum up, equations \eqref{thssms:eq5} and \eqref{thssms:eq7} together imply that on the event $\mathscr E=\mathscr E_{1}\cap\mathscr E_{2}\cap\mathscr E_{3}$,
\begin{align}
\mathcal E(f_{\hat\bbeta}) - \mathcal E(f_{\bbeta})
    &\le
    \lambda\big(\Vert \bu\Vert_{1}+2\|\bbeta\|_1-2\|\hat\bbeta\|_1\big)-{\textstyle \frac 1 2 }\|\bfA\bu\|_2^2.
\end{align}
The desired result follows from this inequality and \Cref{lem:2} with $\mu=\lambda$ and $\gamma=2$.
\end{proof}

Note that according to \Cref{proptrans:3},
\begin{equation}\label{eqtrans:7}
  \prob\bigg(\Vert \bzeta^{(1)} \Vert_\infty\le
  2B_Y\Big(\frac{\log(6p/\delta)}{n}\Big)^{1/2}\bigg[1+\frac{B_X}{3}\Big(\frac{\log(6p/\delta)}{n}\Big)^{1/2}\bigg]\bigg)\ge 1-\frac{\delta}3.
\end{equation}
The next two lemmas provide bounds for the probabilities of the events $\mathscr E_{2}$ and $\mathscr E_{3}$
introduced in \Cref{thssms:prop1}.

\begin{lemma}\label{lem:4}
    Let assumption \ref{A2} be fulfilled.
    Let $J\subseteq[p]$ and let $\bbeta$ be a minimizer
    of the right hand side of \eqref{eq:soi-semisup}. Then, for all $\delta\in(0,1)$, the inequality
    \begin{align}
            \Vert \bzeta^{(2)} \Vert_\infty
            \ge
            B_X B_Y
                \Big(\frac{2\log(6 p/\delta)}{N}\Big)^{1/2}\Big[
                1+
                \frac{B_X}{3} \Big(\frac{2p\Vert \bfSigma^{-1} \Vert \log(6 p/\delta)}{N}\Big)^{1/2}\Big]
    \end{align}		
holds with probability at most $\delta/3$, where the random vector $\bzeta^{(2)}$ is defined in \cref{eq:def-bnu}.
\end{lemma}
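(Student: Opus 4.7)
The plan is to observe that each coordinate of $\bzeta^{(2)}$ is an empirical average of $N$ centered i.i.d.\ bounded random variables and to invoke Bernstein's inequality (Proposition \ref{prop:Bernstein_real}) followed by a union bound over $j\in[p]$. Writing
\[
\zeta^{(2)}_j = \bigl((\bfSigma-\hat\bfSigma_{\rm all})\bbeta\bigr)_j = \frac{1}{N}\sum_{i=1}^N\bigl(\esp[X^j\,\bX^\top\bbeta]-X_i^j\,\bX_i^\top\bbeta\bigr),
\]
the task reduces to controlling the variance and the almost sure bound of a single summand. The variance is readily estimated by $\esp[(X^j\,\bX^\top\bbeta)^2]\le B_X^2\,\bbeta^\top\bfSigma\bbeta$, while Cauchy--Schwarz together with the trivial inequality $\|\bbeta\|_2^2\le\|\bfSigma^{-1}\|\,\bbeta^\top\bfSigma\bbeta$ and Assumption \ref{A2} gives
\[
|X_i^j\,\bX_i^\top\bbeta|\le B_X\|\bX_i\|_2\|\bbeta\|_2\le B_X^2\sqrt{p\|\bfSigma^{-1}\|}\,(\bbeta^\top\bfSigma\bbeta)^{1/2}.
\]

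The key step—and the one I expect to be the main obstacle—is to show the deterministic estimate $\bbeta^\top\bfSigma\bbeta\le B_Y^2$, which is what turns the bare quantity $(\bbeta^\top\bfSigma\bbeta)^{1/2}$ above into the clean factor $B_Y$ appearing in the statement. Since the third term in the right-hand side of \eqref{eq:soi-semisup} does not depend on $\bbeta$, the vector $\bbeta$ minimizes $\excessRisk(f_{\bbeta})+4\lambda\|\bbeta_{J^c}\|_1$. The first-order optimality condition therefore reads
\[
\bfSigma\bbeta = \esp[f^{\star}(\bX)\bX] - 2\lambda\, g,\qquad g\in\partial\|\bbeta_{J^c}\|_1,
\]
where $g_j=0$ for $j\in J$ and $g_j\beta_j=|\beta_j|$ for $j\in J^c$, so that $\bbeta^\top g=\|\bbeta_{J^c}\|_1\ge 0$. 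Taking the inner product of this identity with $\bbeta$ and applying Cauchy--Schwarz together with $\esp[f^{\star}(\bX)^2]\le B_Y^2$,
\[
\bbeta^\top\bfSigma\bbeta + 2\lambda\|\bbeta_{J^c}\|_1 = \esp[f^{\star}(\bX)\,\bX^\top\bbeta]\le B_Y\,(\bbeta^\top\bfSigma\bbeta)^{1/2},
\]
which forces $(\bbeta^\top\bfSigma\bbeta)^{1/2}\le B_Y$.

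With the variance bound $B_X^2 B_Y^2$ and the almost sure bound $B_X^2 B_Y\sqrt{p\|\bfSigma^{-1}\|}$ in hand, Bernstein's inequality applied to the centered sum $N\zeta^{(2)}_j$ at level $\delta/(3p)$ gives, for each $j$,
\[
|\zeta^{(2)}_j|\le B_X B_Y\Big(\frac{2\log(6p/\delta)}{N}\Big)^{1/2}+\frac{2 B_X^2 B_Y\sqrt{p\|\bfSigma^{-1}\|}\,\log(6p/\delta)}{3N}
\]
with probability at least $1-\delta/(3p)$, and a union bound over $j\in[p]$ combined with factoring the common term $B_X B_Y(2\log(6p/\delta)/N)^{1/2}$ produces the announced inequality with probability at least $1-\delta/3$.
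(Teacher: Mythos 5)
Your proof is correct, and the concentration half of it coincides with the paper's argument: the same decomposition of $\bzeta^{(2)}$ into an average of $N$ independent centered vectors, the same variance bound $\esp[(X^j\,\bX^\top\bbeta)^2]\le B_X^2\,\bbeta^\top\bfSigma\bbeta$, the same almost sure bound $2B_X^2\sqrt{p\|\bfSigma^{-1}\|}\,\|\bfSigma^{1/2}\bbeta\|_2$ (the paper passes through $|\bX^\top\bbeta|\le B_X\|\bbeta\|_1\le B_X\sqrt p\,\|\bbeta\|_2$ rather than Cauchy--Schwarz on $\bX^\top\bbeta$, which is immaterial), and the same application of Proposition \ref{prop:Bernstein_real} at level $\delta/(3p)$ followed by a union bound. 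The one place where you genuinely diverge is the key deterministic estimate $\bbeta^\top\bfSigma\bbeta\le B_Y^2$. The paper obtains it from \Cref{lem:strong-convexity-minimizer}, which treats an arbitrary nonnegative convex penalty with $\pen(\bzero_p)=0$ and is proved by re-applying the strong-convexity inequality of \Cref{lem:1} with the formal substitution $\bfA=\bfSigma^{1/2}$, $n=1$, $\bY=1$, $\Xlab=\esp[Y\bX]$ and comparing to $\bbeta=\bzero_p$; no differentiability or explicit subdifferential is needed. You instead write the first-order optimality condition $\bfSigma\bbeta=\esp[f^\star(\bX)\bX]-2\lambda g$ with $g\in\partial\|\bbeta_{J^c}\|_1$, use $\bbeta^\top g=\|\bbeta_{J^c}\|_1\ge0$, and conclude by Cauchy--Schwarz. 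Both routes are valid here (you correctly note that the third term of \eqref{eq:soi-semisup} is constant in $\bbeta$, and $\excessRisk(f_{\bbeta})$ differs from $\esp[(Y-\bX^\top\bbeta)^2]$ by a constant); your argument is more explicit and self-contained for this specific penalty, while the paper's lemma is penalty-agnostic and reuses machinery already in place, which is why it is stated separately.
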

\begin{proof}
    Note that $\bzeta^{(2)}=(1/N)\sum_{i=1}^{N}\bs U_{i}$, where $\bs U_{i}=\bX_{i}(\bX^{\top}_{i}\bbeta)-\esp[\bX(\bX^{\top}\bbeta)]$.
		The random vectors $\bs U_{i}$ are independent and, for all $i\in[N]$ and all $j\in[p]$, the $j$-th component $U_{ij}=X_{ij}(\bX^{\top}_{i}\bbeta)-\esp[X_{j}(\bX^{\top}\bbeta)]$ of $\bs U_{i}$ satisfies, almost surely,
    \begin{equation}
        | U_{ij} |
        \le 2B_X^2 \Vert \bbeta \Vert_1
        \le 2B_X^2  \sqrt p \Vert \bbeta \Vert_2,
   \end{equation}
     where we have used that $\vert\bX^{\top}\bbeta\vert\le\Vert\bX\Vert_{\infty} \Vert\bbeta\Vert_{1}\le B_{X}\Vert\bbeta\Vert_{1}$ with probability $1$. Then, noticing that
     $\Vert\bbeta\Vert_{2}=\Vert\bfSigma^{-1/2}\bfSigma^{1/2}\bbeta\Vert_{2}\le\Vert\bfSigma^{-1/2}\Vert\Vert\bfSigma^{1/2}\bbeta\Vert_{2}=\Vert\bfSigma^{-1}\Vert^{1/2}\Vert\bfSigma^{1/2}\bbeta\Vert_{2}$, we deduce that
     \begin{equation}
\vert U_{ij}\vert\le 2B_X^2  (p \Vert \bfSigma^{-1} \Vert)^{1/2} \Vert \bfSigma^{1/2} \bbeta \Vert_2,
    \end{equation}
    almost surely.
    Since $\bbeta$ minimizes the term on the right hand side of \eqref{eq:soi-semisup}, by \Cref{lem:strong-convexity-minimizer} below,
    $\Vert \bfSigma^{1/2} \bbeta \Vert_2\le B_{Y}$.
    Thus for all $i\in[N]$ and all $j\in[p]$,
    $\vert U_{ij}\vert\le 2B^{2}_{X}B_{Y}(p \Vert \bfSigma^{-1} \Vert)^{1/2}$.
Furthermore, according to the previous lines, it holds
$\frac1N\sum_{i=1}^{N}\esp[X^{2}_{ij}(\bX^{\top}_{i}\bbeta)^{2}]\le B^{2}_{X}B^{2}_{Y}$.
\Cref{prop:Bernstein_real} and the union bound complete the proof.
\end{proof}

\begin{lemma}
    \label{lem:lambda-min}
    Under assumption \ref{A2}, the smallest eigenvalue $\lambda_{\min}(\bfSigma^{-1/2}\hat\bfSigma_N\bfSigma^{-1/2})$ of the
    matrix $\bfSigma^{-1/2}\hat\bfSigma_N\bfSigma^{-1/2}$ satisfies
    \begin{equation}
        \label{eq:concentration-lambdamin}
        \prob
        \bigg\{
            \lambda_{\rm min}(\bfSigma^{-1/2}\hat\bfSigma_N\bfSigma^{-1/2})
            \ge 1 - \Big(\frac{2 B_X^2 p \|\bfSigma^{-1}\| \log(p/\delta)}{N}\Big)^{1/2}
            \bigg\}\ge 1-\delta,
    \end{equation}
    for all $\delta\in(0,1)$ such that
    $2 B_X^2 p \|\bfSigma^{-1}\| \log(p/\delta)\le N$.
\end{lemma}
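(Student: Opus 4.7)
The plan is to recognize the matrix $\bfSigma^{-1/2}\hat\bfSigma_N\bfSigma^{-1/2}$ as an empirical average of rank-one PSD matrices with expectation equal to the identity, and then to apply a matrix Chernoff inequality for the lower tail of the smallest eigenvalue.

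First I would set $\bs W_i = \bfSigma^{-1/2}\bX_i$ for $i \in [N]$, so that
\begin{equation}
\bfSigma^{-1/2}\hat\bfSigma_N\bfSigma^{-1/2} \;=\; \frac{1}{N}\sum_{i=1}^{N} \bs W_i \bs W_i^{\top},
\end{equation}
and $\esp[\bs W_i \bs W_i^{\top}] = \bfSigma^{-1/2}\esp[\bX_i\bX_i^{\top}]\bfSigma^{-1/2} = \bfI_p$. The individual summands $\tfrac{1}{N}\bs W_i\bs W_i^{\top}$ are independent, positive semidefinite, and their expectations sum to $\bfI_p$, so $\mu_{\min} := \lambda_{\min}\!\big(\esp[\tfrac{1}{N}\sum_i \bs W_i\bs W_i^{\top}]\big) = 1$.

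Next I would give the almost-sure operator norm bound needed to feed into the matrix Chernoff inequality. Since assumption \ref{A2} implies $\|\bX_i\|_2^2 \le p B_X^2$ a.s., we have
\begin{equation}
\Big\|\tfrac{1}{N}\bs W_i\bs W_i^{\top}\Big\| \;=\; \tfrac{1}{N}\|\bs W_i\|_2^2 \;=\; \tfrac{1}{N}\bX_i^{\top}\bfSigma^{-1}\bX_i \;\le\; \tfrac{1}{N}\,\|\bfSigma^{-1}\|\,\|\bX_i\|_2^2 \;\le\; \tfrac{1}{N}\,p B_X^2\|\bfSigma^{-1}\| \;=:\; R
\end{equation}
almost surely.

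Finally, I would invoke the matrix Chernoff lower-tail inequality (Tropp), which states that for independent PSD matrices of dimension $p$ with operator norm bounded by $R$, and with $\mu_{\min}$ as above, for every $\varepsilon \in [0,1)$,
\begin{equation}
\prob\!\left\{\lambda_{\min}\!\Big(\tfrac{1}{N}\sum_{i=1}^{N}\bs W_i\bs W_i^{\top}\Big) \le (1-\varepsilon)\mu_{\min}\right\} \;\le\; p\,\exp\!\left(-\frac{\varepsilon^{2}\mu_{\min}}{2R}\right).
\end{equation}
Plugging in $\mu_{\min} = 1$ and $R = pB_X^2\|\bfSigma^{-1}\|/N$ and setting the right-hand side equal to $\delta$ yields $\varepsilon = \bigl(2pB_X^2\|\bfSigma^{-1}\|\log(p/\delta)/N\bigr)^{1/2}$, which is the announced bound \eqref{eq:concentration-lambdamin}; the hypothesis $2B_X^2 p\|\bfSigma^{-1}\|\log(p/\delta)\le N$ precisely ensures $\varepsilon\in[0,1)$ so that the Chernoff bound applies.

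The only real obstacle is the almost-sure spectral bound on $\bs W_i\bs W_i^{\top}$; the $p$ factor there (coming from using $\|\bX_i\|_2^2\le pB_X^2$ entrywise) is what ultimately drives the $p\|\bfSigma^{-1}\|$ factor in the statement. Everything else is a direct application of the matrix Chernoff template.
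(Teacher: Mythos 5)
Your proof is correct and follows essentially the same route as the paper: both identify $\bfSigma^{-1/2}\hat\bfSigma_N\bfSigma^{-1/2}$ as a sum of independent rank-one positive semi-definite matrices whose expectations sum to the identity, bound each summand's spectral norm via $\Vert\bfSigma^{-1/2}\bX_i\Vert_2^2\le pB_X^2\Vert\bfSigma^{-1}\Vert$, and apply Tropp's matrix Chernoff lower-tail inequality with the same deviation parameter. The only difference is cosmetic: you normalize the summands by $1/N$ (so $\mu_{\min}=1$), whereas the paper applies the inequality to the unnormalized sum.
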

\begin{proof}
    For all $i\in[N]$, $\lambda_{\rm max} (\bfSigma^{-1/2}\bX_i \bX_i^\top\bfSigma^{-1/2}) = \Vert \bfSigma^{-1/2}\bX_i\Vert^2 \le p B_X^2\|\bfSigma^{-1}\|$
    and the matrix $\bfSigma^{-1/2}\bX_i \bX_i^\top\bfSigma^{-1/2}$ is positive semi-definite. Applying the first Chernoff matrix inequality given in Remark 5.3 of \citet{T12}
    to the sequence of matrices  $\{\bfSigma^{-1/2}\bX_i \bX_i^\top\bfSigma^{-1/2}:i\in[N]\}$ with
    \begin{equation}
                t = 1 - \Big(
												\frac{2 B_X^2 p\|\bfSigma^{-1}\| \log(p/\delta)}{ N}	
												\Big)^{1/2},
                \qquad
                R = p B_X^2,
                \qquad
                \delta =
                p
            \exp\Big\{
                - \frac{(1-t)^2 N}{2R\|\bfSigma^{-1}\|}
            \Big\}
    \end{equation}
    yields \eqref{eq:concentration-lambdamin}.
\end{proof}

\begin{lemma}
    \label{lem:strong-convexity-minimizer}
    Let $\pen:\R^p\rightarrow [0,+\infty)$ be a convex function such that $\pen(\bzero_p) = 0$.
    Let $\bar\bbeta$ be a minimizer of the function
    \begin{equation}
    \Phi(\bbeta) =  \esp[(\bbeta^\top \bX - Y)^2] + \pen(\bbeta), \qquad \bbeta\in\R^p.
    \end{equation}
    Then $\esp[(\bar\bbeta^\top \bX)^2] \le \esp[Y^2]$ and,
    if Assumption \ref{A2} is fulfilled, $\esp[(\bar\bbeta^\top \bX)^2] \le B_Y^2$.
\end{lemma}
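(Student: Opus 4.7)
The plan is to exploit the fact that $\bar\bbeta$ is a minimizer of $\Phi$ by comparing $\bar\bbeta$ with its scaled versions $t\bar\bbeta$ for $t\in[0,1]$, and then sending $t\to 1^-$ to extract the desired bound via Cauchy--Schwarz.

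First I would exploit convexity of the penalty. Since $\pen$ is convex on $\R^p$ and $\pen(\bzero_p)=0$, for every $t\in[0,1]$ we have $\pen(t\bar\bbeta) = \pen\bigl(t\bar\bbeta + (1-t)\bzero_p\bigr) \le t\,\pen(\bar\bbeta)$. Combining this with the minimizing property $\Phi(\bar\bbeta)\le \Phi(t\bar\bbeta)$ yields
\begin{equation}
\esp\bigl[(\bar\bbeta^\top \bX - Y)^2\bigr] + \pen(\bar\bbeta)
\le \esp\bigl[(t\bar\bbeta^\top \bX - Y)^2\bigr] + t\,\pen(\bar\bbeta).
\end{equation}
Rearranging and expanding the two quadratic expectations gives
\begin{equation}
(1-t^2)\,\esp\bigl[(\bar\bbeta^\top \bX)^2\bigr] \le 2(1-t)\,\esp[\bar\bbeta^\top \bX\, Y] + (t-1)\,\pen(\bar\bbeta).
\end{equation}
Since $\pen(\bar\bbeta)\ge 0$ and $t-1\le 0$, we can drop the penalty term and, for $t\in[0,1)$, divide by $1-t$ to obtain $(1+t)\,\esp[(\bar\bbeta^\top \bX)^2] \le 2\,\esp[\bar\bbeta^\top \bX\, Y]$.

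Now I would let $t\to 1^-$, which yields $\esp[(\bar\bbeta^\top \bX)^2] \le \esp[\bar\bbeta^\top \bX\, Y]$. An application of the Cauchy--Schwarz inequality on the right-hand side gives
\begin{equation}
\esp[(\bar\bbeta^\top \bX)^2] \le \sqrt{\esp[(\bar\bbeta^\top \bX)^2]}\,\sqrt{\esp[Y^2]},
\end{equation}
which, after dividing by $\sqrt{\esp[(\bar\bbeta^\top \bX)^2]}$ (trivially true if this quantity vanishes) and squaring, gives $\esp[(\bar\bbeta^\top \bX)^2] \le \esp[Y^2]$, proving the first claim. The second claim then follows immediately from Assumption \ref{A2}, which guarantees $|Y|\le B_Y$ almost surely and hence $\esp[Y^2]\le B_Y^2$.

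The argument is essentially routine once one has the right idea, and there is no real obstacle; the only subtlety is recognizing that directly comparing $\Phi(\bar\bbeta)$ with $\Phi(\bzero_p)$ is not sharp enough (it would yield only $\esp[(\bar\bbeta^\top\bX)^2]\le 4\esp[Y^2]$ through Cauchy--Schwarz), so one must use the one-parameter family $\{t\bar\bbeta\}_{t\in[0,1]}$ to get the constant right.
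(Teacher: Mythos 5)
Your proof is correct, and it reaches the paper's key intermediate inequality $\esp[(\bar\bbeta^\top\bX)^2]\le\esp[Y(\bar\bbeta^\top\bX)]$ by a genuinely different route. The paper derives this inequality by instantiating its general \Cref{lem:1} (the strong-convexity-at-the-minimizer bound, proved via subdifferentials and the Moreau--Rockafellar theorem) with $\bfA=\esp[\bX\bX^\top]^{1/2}$, $n=1$, $\bY=1$, $\Xlab=\esp[Y\bX]$ and $\bbeta=\bzero_p$: the extra term $-\|\bfA\bar\bbeta\|_2^2$ in \eqref{eq:1} is exactly what upgrades the naive constant $4$ to $1$. You instead obtain the same gain by an elementary one-dimensional perturbation along the segment $\{t\bar\bbeta\}_{t\in[0,1]}$, using $\pen(t\bar\bbeta)\le t\,\pen(\bar\bbeta)$ (which is where the hypotheses $\pen$ convex and $\pen(\bzero_p)=0$ enter, just as in the paper), dividing by $1-t$ and letting $t\to1^-$ --- in effect computing the one-sided directional derivative of $\Phi$ at $\bar\bbeta$ in the direction $-\bar\bbeta$ without ever invoking subdifferential calculus. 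The closing Cauchy--Schwarz and division step is identical to the paper's. Your version is self-contained and more elementary; the paper's version buys economy, since \Cref{lem:1} is needed anyway for the main oracle inequalities and hands over the quadratic gain for free. Your concluding remark that the direct comparison $\Phi(\bar\bbeta)\le\Phi(\bzero_p)$ only yields $\esp[(\bar\bbeta^\top\bX)^2]\le4\esp[Y^2]$ is accurate and correctly identifies why the refinement is needed.
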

\begin{proof}
    We apply \Cref{lem:1} with $\bfA = \esp[\bX \bX^\top]^{1/2}$, $n=1$,
    $\bY = 1$ and $\Xlab = \esp[Y\bX]$ so that
    $\frac 1 n \bY^\top \Xlab = \esp [Y\bX]$.
    Inequality \eqref{eq:1} with $\bbeta=\bzero_p$ yields
    \begin{equation}
        \esp[(\bar\bbeta^\top \bX)^2]
        \le
        2 \esp[ Y ( \bar\bbeta^\top\bX) ]
        - \pen(\bar\bbeta)
        - \esp[(\bar\bbeta^\top \bX)^2].
    \end{equation}
    Rearranging the terms and using that $\pen(\bar\bbeta)\ge0$, we get
    $\esp[(\bar\bbeta^\top \bX)^2] \le \esp[ Y ( \bar\bbeta^\top\bX) ]$.
    In view of the Cauchy-Schwarz inequality,
    $(\esp[ Y ( \bar\bbeta^\top\bX) ])^2 \le \esp[Y^2]\, \esp[(\bar\bbeta^\top \bX)^2]$, which implies that
    $(\esp[(\bar\bbeta^\top \bX)^2])^2 \le {\esp[Y^2] \,\esp[(\bar\bbeta^\top \bX)^2]}$. It now suffices to divide both sides of the last inequality
    by ${\esp[(\bar\bbeta^\top \bX)^2]}$ to obtain the claim of the lemma.
\end{proof}

\begin{proof}[Proof of \cref{thssms}]
Under the conditions of the theorem, we have
$$
\Big(\frac{ 2B_X^2 p\Vert \bfSigma^{-1}\Vert \log(3p/\delta)}{N}\Big)^{1/2}\le \frac13.
$$
Therefore, \Cref{lem:lambda-min} implies that $\prob(\mathscr E_3)\ge 1-\delta/3$. On the other hand, in view of \cref{eqtrans:7} and \Cref{lem:4},
the conditions
\begin{align}
\lambda&\ge 8B_Y\Big(\frac{\log(6p/\delta)}{n}\Big)^{1/2}\Big[1+\frac{B_X}{3}\Big(\frac{\log(6p/\delta)}{n}\Big)^{1/2}\Big],\\
\lambda&\ge 4B_X B_Y \Big(\frac{2\log(6 p/\delta)}{N}\Big)^{1/2}\Big[1+
                \frac{B_X}{3} \Big(\frac{2p\Vert \bfSigma^{-1} \Vert \log(6 p/\delta)}{N}\Big)^{1/2}\Big]
\end{align}
imply that $\prob(\mathscr E_1)\ge 1-\delta/3$ and $\prob(\mathscr E_2)\ge 1-\delta/3$. One can easily check that under the conditions
of the theorem, the two inequalities of the last display are satisfied. Therefore, we have $\prob(\mathscr E_1\cap \mathscr E_2
\cap \mathscr E_3)\ge 1-\delta$. Finally, applying \Cref{thssms:prop1} we get the claim of the theorem.
\end{proof}

\subsubsection{Proof of the oracle inequality in expectation.}\label{ssec:74}

Let $\delta$ be a positive number smaller than $1$ to be chosen later.
We have already seen in \Cref{cor:1} that on an event $\mathscr E$ of probability $1-\delta$, we have
    \begin{align}
    \excessRisk (f_{\hat\bbeta}) \le
    \inf_{J\subseteq [p]}
    \inf_{\bbeta\in\R^p}
    \Big\{
        \excessRisk (f_{\bbeta})
        + 4\lambda\|\bbeta_{J^c}\|_1
        + {\textstyle\frac{27\| \bfSigma^{-1} \|}{4}}\, \lambda^2 |J|
    \Big\}.
    \end{align}
On the other hand, using the fact that $\hat\bbeta$ minimises the function $\psi(\bbeta) = \|\hat\bfSigma_N^{1/2}\bbeta\|_2^2 -\frac2n\bY^\top\bfX_n\bbeta+2\lambda\|\bbeta\|_1$,
we have $\psi(\hat\bbeta)\le \psi(\mathbf 0_p)$, which yields
$$
\|\hat\bfSigma_N^{1/2}\hat\bbeta\|_2^2 -\frac2n\bY^\top\bfX_n\hat\bbeta+2\lambda\|\hat\bbeta\|_1 =
\|\hat\bfSigma_N^{1/2}\hat\bbeta-{\textstyle\frac1n}\hat\bfSigma_N^{-1/2}\bfX_n^\top\bY\|_2^2 -{\frac1{n^2}}\|\hat\bfSigma_N^{-1/2}\bfX_n^\top\bY\|_2^2+2\lambda\|\hat\bbeta\|_1 \le 0.
$$
Note that $\hat\bfSigma_N^{-1/2}$ is understood as the Moore-Penrose pseudo-inverse and all the expressions involving this quantity
are well defined since $N\hat\bfSigma_N \succeq n\hat\bfSigma_n = \bfX_n^\top\bfX_n$.
This implies that $2\lambda\|\hat\bbeta\|_1\le {\textstyle\frac1{n^2}}\|\hat\bfSigma_N^{-1/2}\bfX_n^\top\bY\|_2^2\le
\frac1{n^2}\|\hat\bfSigma_N^{-1/2}\bfX_n^\top\|^2\|\bY\|_2^2=\frac1{n}\|\hat\bfSigma_N^{-1/2}\hat\bfSigma_n\hat\bfSigma_N^{-1/2}\|
\,\|\bY\|_2^2$, which entails
\begin{equation}
\|\hat\bbeta\|_1\le\frac{B_Y^2}{2\lambda}\|\hat\bfSigma_N^{-1/2}\hat\bfSigma_n\hat\bfSigma_N^{-1/2}\|
\le \frac{B_Y^2N}{2n\lambda}.\label{norm:1}
\end{equation}
It is also true that for every $\bbeta\in\RR^p$,
\begin{align}
    \excessRisk (f_{\bbeta})
        &= \esp[(f^\star(\bX)-\bX^\top\bbeta)^2]\le 2\esp[f^\star(\bX)^2]+2\bbeta^\top\bfSigma\bbeta
        \le 2B_Y^2+2\|\bbeta\|_1^2.\label{norm:2}
\end{align}
Therefore, we have
$\esp[\excessRisk (f_{\hat\bbeta})\mathds 1_{\mathscr E^c}]\le 2B_Y^2\prob(\mathscr E^c) +2\esp[\|\hat\bbeta\|_1^2\mathds 1_{\mathscr E^c}]
= 2\delta B_Y^2+2\esp[\|\hat\bbeta\|_1^2\mathds 1_{\mathscr E^c}]$. Combining this inequality with \eqref{norm:1}, we get
$$
\esp[\excessRisk (f_{\hat\bbeta})\mathds 1_{\mathscr E^c}]\le
2\delta B_Y^2 +\frac{\delta B_Y^4N^2}{2n^2\lambda^2}.
$$
Setting $\delta = N^{-2}$, we get the claim of the theorem.

\appendix

\section{Bernstein inequality}
\label{A}
The next result follows from \cite[Proposition 2.9]{Massart2007}.
\begin{proposition}
\label{prop:Bernstein_real}
Let $Z_1,\ldots,Z_N$ be independent
real-valued
random variables satisfying, for all $i\in[N]$ and  for some constant $b$,  $\esp[Z_i^2] < +\infty$
and $|Z_i - \esp Z_i |\le b$ almost surely. Denote $\bar Z_N = \frac1N\sum_{i=1}^N Z_i$ and
$\sigma_N^2 = (1/N) \sum_{i=1}^N \esp\left[Z_i^2 - (\esp Z_i)^2\right]$.
Then, for all $\delta\in(0,1)$, inequality
\begin{equation}
    | \bar Z_N-\esp[\bar Z_N]|\le \sigma_N \Big(\frac{ 2  \log(2 /\delta)}{N}\Big)^{1/2}\Big[1 + \frac{b}{6N\sigma_N}\Big(\frac{ 2  \log(2 /\delta)}{N}\Big)^{1/2}\Big],
\end{equation}
holds with probability at least $1-\delta$.
\end{proposition}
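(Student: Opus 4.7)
The plan is to deduce the stated deviation bound from a standard variance-sensitive Bernstein inequality, namely Proposition~2.9 of \citet{Massart2007}, which asserts that for independent centered random variables $Y_1,\dots,Y_N$ with $|Y_i|\le b$ almost surely and $V=\sum_{i=1}^N\esp[Y_i^2]$, one has
$$
\prob\bigg(\bigg|\sum_{i=1}^N Y_i\bigg|\ge \sqrt{2Vx}+\tfrac{bx}{3}\bigg)\le 2e^{-x},\qquad \forall x>0.
$$
Since this is the black-box input, the remaining work is purely a matter of rewriting and rescaling.

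First, I would center the variables by setting $Y_i = Z_i - \esp[Z_i]$. By construction the $Y_i$ are independent, centered, and satisfy $|Y_i|\le b$ almost surely by hypothesis. Their second moments are $\esp[Y_i^2]=\esp[Z_i^2]-(\esp Z_i)^2$, so that $V = \sum_{i=1}^N \esp[Y_i^2] = N\sigma_N^2$, and the partial sum satisfies $\sum_{i=1}^N Y_i = N(\bar Z_N - \esp[\bar Z_N])$. Substituting these identifications into Massart's bound and dividing the deviation event by $N$ gives, for every $x>0$,
$$
\prob\!\left(|\bar Z_N - \esp[\bar Z_N]|\ge \sigma_N\Big(\tfrac{2x}{N}\Big)^{1/2}+\tfrac{bx}{3N}\right)\le 2e^{-x}.
$$

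Finally, I would specialize to $x=\log(2/\delta)$ so that the exceptional probability is exactly $\delta$, and then factor the quantity $\sigma_N(2\log(2/\delta)/N)^{1/2}$ out of the two summands on the right-hand side. A short algebraic manipulation shows that the residual bracket equals $1 + \frac{b}{6\sigma_N}(2\log(2/\delta)/N)^{1/2}$, yielding the form displayed in the proposition. There is no genuine obstacle: the argument is merely a centering reduction followed by a rescaling and a factorization, with all the probabilistic content already packaged inside Massart's inequality. The only care needed is bookkeeping in the final factorization step, which matches the structure used later in the paper (for instance in the bounds appearing inside \Cref{proptrans:3}).
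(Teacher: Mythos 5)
Your proof is correct and takes essentially the same route as the paper: both reduce the claim to Proposition~2.9 of Massart (2007) by centering, rescaling by $N$, setting $x=\log(2/\delta)$, and factoring. Two small remarks. First, Massart's Proposition~2.9 is actually stated under the one-sided moment condition $\sum_i\esp[(X_i)_+^k]\le\frac{k!}{2}vc^{k-2}$ rather than under a boundedness hypothesis; the paper therefore spends a line verifying this condition with $v=\sigma_N^2/N$ and $c=b/(3N)$ (via $k!\ge 2\cdot 3^{k-2}$ for $k\ge3$) and then applies the result to both $\pm X_i$ with a union bound, whereas you invoke the two-sided bounded-variable corollary directly — a true statement, but not quite the literal form of the cited result. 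Second, your final bracket is $1+\frac{b}{6\sigma_N}\big(\frac{2\log(2/\delta)}{N}\big)^{1/2}$, which does \emph{not} coincide with the printed $1+\frac{b}{6N\sigma_N}\big(\frac{2\log(2/\delta)}{N}\big)^{1/2}$: expanding the printed bracket gives a second term $b\log(2/\delta)/(3N^2)$, while both your computation and the paper's own proof arrive at $b\log(2/\delta)/(3N)$. The extra $N$ in the displayed statement is evidently a typo (the applications in \Cref{proptrans:3} are consistent with the $b\log(2/\delta)/(3N)$ form), so your derivation proves the intended — and correct — version of the bound.
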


\begin{proof}
Define, for all $i\in[N]$, the random variable $X_{i}=(Z_{i}-\esp[Z_{i}])/N$.
Denote as well
$$v=\sum_{i=1}^{N}\esp[X^{2}_{i}]=\frac{1}{N^{2}}\sum_{i=1}^{N}\esp\left[Z^{2}_{i}-(\esp Z_{i})^{2}\right]=\frac{u}{N}.$$
For all $k\ge3$, the assumptions imply that
$$\sum_{i=1}^{N}\esp[(X_{i})^{k}_{+}]\le v\left(\frac{b}{N}\right)^{k-2}\le \frac{k!}{2}v\left(\frac{b}{3N}\right)^{k-2},$$
where we have used the fact that $k!/3^{k-2}\ge2$, for all $k\ge3$. As a result, applying \cite[Prop.\ 2.9]{Massart2007},
with $v=\sigma^2_N/N$ and $c=b/3N$, we get that for all $\delta\in(0,1)$, the inequality
$$
\sum_{i=1}^{N}X_{i}> \sigma_N\sqrt{\frac{2\log(2/\delta)}{N}}+\frac{b\log(2/\delta)}{3N}
$$
holds with probability less than $\delta/2$. Applying the same argument to the variables $-X_{i}$,
we infer that for all $\delta\in(0,1)$, the inequality
$$\sum_{i=1}^{N}X_{i}< -\sigma_N\sqrt{\frac{2\log(2/\delta)}{N}}-\frac{b\log(2/\delta)}{3N},$$
holds with probability less than $\delta/2$, which completes the proof.
\end{proof}

\section*{Acknowledgments}
The work of Q.~Paris was supported by the Russian Academic Excellence Project 5-100. The work of A.~Dalalyan and P.~Bellec was partially
supported by the grant Investissements d'Avenir (ANR-11-IDEX-0003/Labex Ecodec/ANR-11-LABX-0047) and the chair
``LCL/GENES/Fondation du risque, Nouveaux enjeux pour nouvelles donn\'ees''.

\bibliography{bibBDGP16}

\end{document}